\documentclass[a4paper,11pt,leqno]{amsart}
\usepackage[big]{haiman_style}
\usepackage{comment,longtable}

\title{Haiman's Conjecture and Springer's Representations}

\author[Minh-T\^{a}m Quang Trinh]{Minh-T\^{a}m Quang Trinh}
\address{Department of Mathematics, Howard University, Washington, DC 20059}
\email{minhtam.trinh@howard.edu}

\begin{document}
	
\maketitle

\begin{abstract}\frenchspacing{For any connected complex reductive group $G$ and element $z$ of its Weyl group $W$, we use work of Lusztig and Abreu--Nigro to compute the graded $W$-character of the intersection cohomology of any closed Lusztig variety for $z$ over the regular semisimple locus of $G$.
We relate the resulting formula to unipotent Lusztig varieties, giving a new geometric model for unicellular LLT polynomials.
We then consider Laurent polynomials $\alpha_{\psi, G}^z$ indexed by irreducible characters $\psi$, encoding how our formula decomposes into ungraded characters arising from the Springer theory of $G$.
From evidence in low rank, we conjecture that if $\psi$ is inflated from type $A$ in a particular way, then the nonzero coefficients of $\alpha_{\psi, G}^z$ are positive and unimodal.
This offers an answer to a 1993 question of Haiman about generalizing a conjecture he posed for symmetric groups.
We also prove that the matrix formed by the $\alpha_{\psi, G}^z$ is partially triangular, and that their positivity and unimodality properties are stable under inclusions of Levi subgroups.}\end{abstract}

%\setcounter{tocdepth}{1}
%\tableofcontents
\thispagestyle{empty}

%------------------------------------------------------------

%	\mainmatter

\section{Conjectures and Results}\label[section]{sec:intro}

\subsection{Motivation}

The subject of this paper is how to generalize a 1993 conjecture of Haiman from symmetric groups to Weyl groups.

For any finite Coxeter group $W$, let $H_W$ be the Hecke algebra of $W$ over $\bb{Z}[\sf{v}^{\pm 1}]$ with standard basis $\{\delta_w\}_{w \in W}$.
We fix a system of simple reflections $S \subseteq W$ and normalize the elements $\delta_w$ so that $H_W$, as an algebra, is generated by the set $\{\delta_s\}_{s \in S}$ modulo the relations $\delta_s^2 = (\sf{v}^2 - 1)\delta_s + \sf{v}^2$ and so-called braid relations.
Let $\{c_w\}_{w \in W}$ be the Kazhdan--Lusztig basis of $H_W$ denoted $\{C'_w\}_{w \in W}$ in~\cite{kl}.
It is invariant under the ring anti-automorphism sending $\sf{v} \mapsto \sf{v}^{-1}$ and $\delta_s \mapsto \delta_s^{-1}$ for all $s \in S$.
For instance, if $s \in S$, then $c_s = \sf{v}^{-1}(1 + \delta_s) = \sf{v}(1 + \delta_s^{-1})$.

Let $\Irr(W)$ be the set of irreducible characters of $W$.
The representation theory of $H_W$ deforms that of $W$, just as $H_W$ itself deforms the group ring $\bb{Z}W$.
In particular, each class function $\chi$ on $W$ deforms to a $\bb{Z}[\sf{v}^{\pm 1}]$-linear function $\chi_\sf{v}$ on $H_W$ that remains a trace: \emph{i.e.}, vanishes on commutators.
If $W$ is a Weyl group and $\chi \in \Irr(W)$, then $\chi$ takes values in $\bb{Z}$, and $\chi_\sf{v}$ takes values in $\bb{Z}[\sf{v}^{\pm 1}]$.

When $W$ is the symmetric group $S_n$, the theory of cell modules shows that $\chi_\sf{v}(c_\z) \in \bb{Z}_{\geq 0}[\sf{v}^{\pm 1}]$ for all $\z \in W$ and $\chi \in \Irr(W)$, a positivity statement that already fails when $W$ is the Weyl group of type $BC_2$.
Haiman conjectured a much stronger positivity.
To state it, first recall that the elements of $\Irr(S_n)$ are indexed by integer partitions of $n$.
For any $\lambda \vdash n$, let $\chi_\lambda \in \Irr(S_n)$ be the corresponding irreducible character.
Next, recall that the Frobenius characteristic map~\cite[\S{1.7}]{macdonald} is a linear bijection between class functions on $S_n$ and symmetric functions of degree $n$ (say, in $n$ variables), taking $\chi_\lambda$ to the Schur function indexed by $\lambda$.

For any $\mu \vdash n$, let $\mon_\mu \colon S_n \to \bb{Z}$ be the class function whose Frobenius characteristic is the \emph{monomial} symmetric function indexed by $\mu$.
Then
\begin{align}
\chi_\lambda = \sum_{\mu \vdash n} K_{\lambda, \mu} \mon_\mu,
\end{align}
where $K_{\lambda, \mu} \in \bb{Z}_{\geq 0}$ is the Kostka number counting semistandard Young tableaux of shape $\lambda$ and weight $\mu$.
For any $\z \in S_n$, let
\begin{align}
\alpha_\mu^\z(\sf{v}) = \mon_{\mu, \sf{v}}(c_\z) \in \bb{Z}[\sf{v}^{\pm 1}].
\end{align}
What follows is Conjecture 2.1 in~\cite{haiman}.

\begin{conj}[Haiman 1993]\label[conj]{conj:haiman}
For any $\z \in S_n$ and $\mu \vdash n$, the nonzero coefficients of $\alpha_\mu^\z$ are all positive and form a unimodal sequence.
\end{conj}

Haiman points out that the coefficients of $\alpha_\mu^\z(\sf{v})$ are at least palindromic, due to the invariance of $c_\z$ under the anti-automorphism sending $\delta_s, \sf{v}$ to $\delta_s^{-1}, \sf{v}^{-1}$.

Further motivation for \Cref{conj:haiman} appeared in~\cite{chss}, where Clearman, Hyatt, Shelton, and Skandera showed that the positivity of $\mon_{\mu, \sf{v}}(c_\z)$ would subsume the Shareshian--Wachs conjecture: the $e$-positivity of the chromatic quasisymmetric functions of unit interval graphs~\cite{sw}.
Thus it would also subsume the Stanley--Stembridge conjecture~\cite{stanley,ss} recently proved in~\cite{hikita,gmrww}.

To explain the connection, let $\ind_\mu \colon S_n \to \bb{Z}$ be the class function whose Frobenius characteristic is the homogeneous symmetric function indexed by $\mu$.
Equivalently, $\ind_\mu$ is the character of the representation of $S_n$ induced from the parabolic subgroup $S_\mu \simeq S_{\mu_1} \times S_{\mu_2} \times \ldots$
Then 
\begin{align}
\ind_\mu = \sum_{\lambda \vdash n} K_{\lambda, \mu} \chi_\lambda.
\end{align}
So the Laurent polynomials $\alpha_\mu^\z(\sf{v})$ are uniquely determined by requiring that
\begin{align}\label{eq:haiman-iota}
\sum_{\lambda \vdash n} \chi_{\lambda, \sf{v}} (c_\z)\chi_\lambda = \sum_{\mu \vdash n} \alpha_\mu^\z(\sf{v}) \ind_\mu
\end{align}
as functions on $S_n$.

Each unit interval graph on $n$ vertices corresponds to some permutation $\z \in S_n$ that is \dfemph{$312$-avoiding},\footnote{In Haiman's terminology, \dfemph{codominant}.} meaning no indices $1 \leq i < j < k \leq n$ exist for which $z(j) < z(k) < z(i)$.
The authors of~\cite{chss} show that up to a sign twist and power of $\sf{v}$, the Frobenius characteristic of the left-hand side of \eqref{eq:haiman-iota} for this $\z$ is the chromatic quasisymmetric function for the graph.
In this way, \Cref{conj:haiman} would imply the positivity conjectured by Shareshian--Wachs.

For further details about unit interval graphs and their chromatic quasisymmetric functions, we defer to the references above.

\subsection{Lusztig Varieties}\label[subsection]{subsec:lusztig}

At the end of~\cite[\S{2}]{haiman}, Haiman writes:
\begin{quote} 
	There is probably an analog of Conjecture 2.1 for other Coxeter groups or at least Weyl groups. At present, however, it is not apparent what the analog of a monomial character should be.
\end{quote} 
Though we still lack a meaningful generalization of the characters $\mon_\mu$ beyond $S_n$, we will present a way to generalize \Cref{conj:haiman} to finite Weyl groups, using the work of Springer and Lusztig in algebraic geometry.

Already in~\cite{sw}, Shareshian--Wachs proposed an algebro-geometric formula\footnote{Another statement known as the Shareshian--Wachs conjecture.} for the chromatic quasisymmetric function of any unit interval graph, in terms of an $S_n$-action on the singular cohomology of a complex algebraic variety associated with the graph. 
In Lie-theoretic language, these varieties are Hessenberg varieties for the group $\GL_n$, depending not only on the graph, but on a regular semisimple element in the Lie algebra $\gl_n$.
The formula was proved by Brosnan--Chow~\cite{bc}.
In particular, Brosnan--Chow showed how to construct the $S_n$-action from the monodromy of a family of Hessenberg varieties over the regular semisimple locus of $\gl_n$.

Hessenberg varieties can be generalized to any reductive algebraic group $G$, by replacing unit interval graphs with so-called Hessenberg subspaces of the Lie algebra of $G$.
This suggests a generalization of the $e$-positivity conjecture in terms of Hessenberg varieties, replacing $S_n$ with the Weyl group of $G$.
More daringly, it suggests a generalization of \Cref{conj:haiman} in terms of varieties associated with elements of the Weyl group.

In~\cite{an_25}, Abreu--Nigro showed that the left-hand side of \eqref{eq:haiman-iota} records a monodromy representation on the \emph{intersection} cohomology of a variety $\bar{Y}_{\z, g}$ built from $\z$ and an arbitrary choice of regular semisimple element $g \in \GL_n(\bb{C})$.
This variety is now called a Lusztig variety, as it was first introduced by Lusztig in~\cite{lusztig_79}.
Explicitly, if $\cal{B}$ denotes the flag variety of $G$, and $W$ denotes its Weyl group, then for any $\z \in W$ and $g \in G(\bb{C})$, the \dfemph{closed Lusztig variety} $\bar{Y}_{\z, g}$ is the Zariski closure in $\cal{B}$ of the \dfemph{open Lusztig variety}
\begin{align}
Y_{\z, g} \vcentcolon= \{B \in \cal{B} \mid B \xrightarrow{\z} g^{-1}Bg\},
\end{align}
where $B \xrightarrow{\z} B'$ indicates that a pair of Borels $(B, B')$ is in relative position $\z$.
Intersection cohomology
%, rather than singular cohomology,
is needed because 
%in general,
$\bar{Y}_{\z, g}$ may be singular.
Abreu--Nigro's result harmonizes with recent work of Brosnan--Hong--Lee~\cite{bhl}, showing that when a (smooth) element $\z \in W$ corresponds to a Hessenberg subspace $\fr{H}$ of the Lie algebra of $G$, the $W$-representation arising from a Lusztig variety for $\z$ matches that arising from a Hessenberg variety for $\fr{H}$.

In this paper, we will use \'etale cohomology theories with $\ell$-adic coefficients.
Let $\ur{IH}^\ast$ denote intersection cohomology.
For a general connected complex reductive algebraic group $G$, we write $G^\rs$ to denote its regular semisimple locus, a Zariski dense open subset.
We are led to seek a generalization of \Cref{conj:haiman} that replaces $S_n$ with its Weyl group $W$ and the left-hand side of \eqref{eq:haiman-iota} with $\ur{IH}^\ast(\bar{Y}_{\z, g})$ for $g \in G^\rs(\bb{C})$.

It turns out that $\ur{IH}^i(\bar{Y}_{\z, g})$ vanishes for odd $i$ and its top degree is $\ell(\z)$, the Bruhat length of $\z$.
Moreover, by a result of Abreu--Nigro, the graded representation of $W$ on $\ur{IH}^\ast(\bar{Y}_{\z, g})$ is constant up to isomorphism as $g$ varies in $G^\rs(\bb{C})$.
Our first theorem is a purely algebraic formula for this representation in terms of a pairing $\{-, -\} \colon \Irr(W) \times \Irr(W) \to \bb{Q}$ called the \dfemph{(truncated) exotic Fourier transform}, originating in Lusztig's work on characters of finite reductive groups~\cite{lusztig_84}.
We review its definition in \Cref{sec:dl}.
For $W$ of classical type, Lusztig gives a purely combinatorial algorithm to calculate $\{-, -\}$.

For any $\z \in W$, let $\tau_{\z, \sf{v}} \colon W \to \bb{Z}[\sf{v}^{\pm 1}]$ be defined by
\begin{align}
	\tau_{\z, \sf{v}} = \sum_{\chi, \psi \in \Irr(W)} {\{\chi, \psi\}}\psi_\sf{v}(c_\z)\chi.
\end{align}
Note that $\{-, -\}$ and hence $\tau_{\z, \sf{v}}$ only depend on $W$, not on $G$.
In \Cref{sec:rs}, we use the work of Lusztig and Abreu--Nigro to prove the following formula.

\begin{thm}\label[thm]{thm:rs}
Fix a connected complex reductive group $G$ with Weyl group $W$.
Then for any $\z \in W$ and $g \in G^\rs(\bb{C})$ and $w \in W$, we have
\begin{align}
\sum_i {\sf{v}^{2i}} \tr(w \mid \ur{IH}^{2i}(\bar{Y}_{\z, g}))
	= \sf{v}^{\ell(\z)} \tau_{\z, \sf{v}}(w),
\end{align}
where the $W$-action on the intersection cohomology arises from the monodromy action of the fundamental group of $G^\rs$ based at $g$.
\end{thm}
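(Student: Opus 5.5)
Our strategy is to move the computation to finite fields and compare traces of Frobenius twisted by Weyl group elements, using Lusztig's character formulas for finite reductive groups together with the Abreu--Nigro identification of the monodromy action.

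\emph{Step 1: reduction to a sheaf on $G^\rs$.} Consider the family $\bar{Y}_\z = \{(B, g) \mid B \in \overline{Y_{\z,g}}\} \to G$. Over $G^\rs$ it is a fibration with fibers $\bar{Y}_{\z,g}$. Its pushforward $\pi_\ast\mathrm{IC}$ restricted to $G^\rs$ has cohomology sheaves that are local systems. By Abreu--Nigro, these local systems factor through $\pi_1(G^\rs)\to W$, i.e.\ through the Grothendieck--Springer cover $\tilde{G}^\rs \to G^\rs$ with fiber $W$-torsor. Thus each $\ur{IH}^{2i}$ gives a $W$-representation $V^{2i}$, and the graded character is what we must identify.

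\emph{Step 2: identifying the characters through character sheaves.} Globally, $\pi_\ast \mathrm{IC}(\bar{Y}_\z)$ on $G$ is a direct sum of shifted character sheaves. Indeed, by Lusztig's work on character sheaves, $\pi_\ast \mathrm{IC}$ decomposes via the Hecke algebra: the class of $\pi_\ast\mathrm{IC}(\bar Y_\z)$ corresponds to $c_\z$, since the closed Lusztig variety is the convolution of the Kazhdan--Lusztig IC sheaf on $\bar{O}_\z\subseteq\cal{B}\times\cal{B}$ with the conjugation correspondence. As a result, the multiplicity of the unipotent character sheaf $A_\chi$ (more precisely the component of the Grothendieck--Springer sheaf) is governed by $\psi_\sf{v}(c_\z)$, transformed through the Fourier matrix $\{\chi,\psi\}$. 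Restricted to $G^\rs$, only the principal-series part of each character sheaf survives, namely $\mathrm{IC}$ of the local system $\chi$ pushed from $\tilde G^\rs$. In Lusztig's formalism, the Fourier transform relates almost characters $R_\psi$ to unipotent characters, and the truncation to $\Irr(W)$ is precisely the part that is nonzero on $G^\rs$. This yields $\sum_i \sf{v}^{2i}[V^{2i}] = \sf{v}^{\ell(\z)}\sum_{\chi,\psi}\{\chi,\psi\}\psi_\sf{v}(c_\z)\chi$. The shift $\sf{v}^{\ell(\z)}$ comes from the normalization of the IC sheaf and the convention that $c_\z$ is $\sf{v}$-bar invariant.

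\emph{Step 3: checking via point counts.} To fix signs and the grading rigorously, we work over $\bb{F}_q$. For $w\in W$ take a rational regular semisimple $g$ in a torus of type $w$. Purity (Deligne), together with the evenness of $\ur{IH}$, gives that the Frobenius eigenvalues on $\ur{IH}^{2i}$ are $q^i$ times the $w$-twist. Hence $\sum_i q^i\tr(w\mid \ur{IH}^{2i})$ equals the trace of Frobenius on stalks of $\mathrm{IC}$ summed over $\bar Y_{\z,g}(\bb{F}_q)$. Lusztig's formula (from \cite{lusztig_84}) expresses this sum as $\sum_\rho \rho_{q}(c_\z)$-weighted values of unipotent characters at $g$. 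By Deligne--Lusztig, these values are nonzero only through $R_{T_w}$-components, and the unipotent-to-almost-character transition is given by $\{-,-\}$. Since both sides are polynomials in $q^{1/2}$ agreeing for infinitely many $q$, we may set $\sf{v}=q^{1/2}$.

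The main obstacle is Step 3's bookkeeping: matching the $W$-action from monodromy (Abreu--Nigro) with the $w$-twisted Frobenius, including a possible sign or twist by $\varepsilon$, and correctly handling the nonprincipal families so that only the truncated pairing $\{\chi,\psi\}$ over $\Irr(W)\times\Irr(W)$ appears.
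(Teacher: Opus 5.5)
Your Step~2 is, in outline, the paper's proof. The paper identifies $\mathrm{IH}^\ast(\bar Y_{z,g})$ with the stalk $\mathcal{H}^\ast_g(\bar K_z)$ (Lemma~\ref{lem:ih}). Note that calling the family a fibration does not by itself say that $\mathrm{IC}(\bar Y_z)$ restricts to $\mathrm{IC}(\bar Y_{z,g})$; that is a separate step. The paper then applies the decomposition theorem and discards the non-principal-series unipotent character sheaves on $G^{\mathrm{rs}}$ using cleanness. It uses that $\mathcal{H}^\ast_g(\mathcal{A}_\chi)$ sits in degree $-\dim G$ with monodromy $\chi$, and feeds in Lusztig's multiplicity formula, Theorem~\ref{thm:exotic}. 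That formula carries exact signs $(-\mathsf{v})^i$ and $(-1)^{\dim G}\mathsf{v}^{\dim G+\ell(z)}$. Together with Abreu--Nigro's odd vanishing, it already fixes signs and grading, so no separate point count is needed. In your write-up this deep input is only asserted (``governed by $\psi_{\mathsf{v}}(c_z)$, transformed through the Fourier matrix''), and you defer signs and grading to Step~3.

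Step~3 is where the genuine gap is. The claim ``purity plus evenness gives that the Frobenius eigenvalues on $\mathrm{IH}^{2i}$ are $q^i$ times the $w$-twist'' does not follow. Purity only gives $|\lambda|=q^i$: it does not force $\lambda=q^i$, and it says nothing relating Frobenius at $g\in T_w^F$ to the monodromy action of $w$. You need three things:
\begin{enumerate}
\item[(a)] In characteristic $p$, $\mathcal{H}^{2i}(\bar K_z)|_{G^{\mathrm{rs}}}$ is lisse and becomes geometrically constant on the $W$-cover. Then Frobenius at $g\in T_w^F$ is $w$ composed with a $W$-equivariant Frobenius $\phi$ on a fixed space. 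Abreu--Nigro work over $\mathbb{C}$, so this must be spread out or taken from Step~2.
\item[(b)] $\phi$ has only the eigenvalue $q^i$ in degree $2i$. Your own count supplies this. At split $g$ the trace of $F^n$ is a fixed Laurent polynomial in $q^{n/2}$, and all terms enter with sign $+$ by evenness. Linear independence of $n\mapsto\lambda^n$ plus purity then forces $\lambda=q^i$, so $\mathrm{tr}(\phi w)=q^i\,\mathrm{tr}(w)$ because $\phi$ commutes with $w$.
\item[(c)] A comparison of $\mathrm{IH}$ and monodromy between $\overline{\mathbb{F}}_p$ and $\mathbb{C}$.
\end{enumerate}

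With these repairs, Step~3 is a genuinely different and more elementary proof, and its character computation is right. Grothendieck--Lefschetz, Iwahori's theorem and the Kazhdan--Lusztig stalks give $q^{\ell(z)/2}\sum_\chi\rho_\chi(g)\chi_{q^{1/2}}(c_z)$. The indicator of a regular semisimple class is uniform, and $\rho_\chi$ is orthogonal to every $R_T^\theta$ with $\theta\neq 1$. Hence $\rho_\chi(g)=\sum_\psi\{\chi,\psi\}\psi(w)$ directly from the appendix definition $\{\chi,\psi\}=(\rho_\chi,R_\psi)_{G^F}$, and symmetry of $\{-,-\}$ yields $q^{\ell(z)/2}\tau_{z,q^{1/2}}(w)$. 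This route bypasses character sheaves, namely Lusztig's multiplicity theorem and cleanness. The paper's route instead goes through the multiplicity formula, which it reuses for the unipotent theorem.
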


\begin{ex}\label[ex]{ex:kronecker}
In type $A$, where $W = S_n$, the exotic Fourier transform is simply the pairing where $\{\chi, \chi\} = 1$ and $\{\chi, \psi\} = 0$ for $\chi \neq \psi$.
So here, $\tau_{\z, \sf{v}}$ recovers the left-hand side of \eqref{eq:haiman-iota}, and \Cref{thm:rs} recovers~\cite[Theorem~1.5]{an_25}.
\end{ex} 

The linear function on $H_W$ that sends $c_\z \mapsto \tau_{\z, \sf{v}}$, being a linear combination of the characters $\psi_\sf{v}$, may be viewed as a trace on $H_W$.
It appeared in~\cite{trinh}, in relation to Lusztig varieties over the \emph{unipotent} locus $G^\unip \subseteq G$.
Pursuing the viewpoint there yields a parallel interpretation of $\tau_{\z, \sf{v}}$ in terms of $G^\unip$ rather than $G^\rs$, as we now explain.
%Patrick Brosnan has pointed out that this is reminiscent of how the regular semisimple and unipotent loci of the Lie algebra of $G$ are related under the Fourier--Deligne transform.

Recall that in Lusztig's work, the varieties $Y_{\z, g}$, \emph{resp.}\ $\bar{Y}_{\z, g}$, are the fibers of maps $Y_\z \to G$, \emph{resp.}\ $\bar{Y}_\z \to G$.
For the identity element $\z = e$, we have $\bar{Y}_e = Y_e$.
The restricted map $Y_e^\unip \to G^\unip$ is a resolution of singularities of $G^\unip$ now called the \dfemph{Springer resolution}.
Springer discovered a remarkable action of $W$ on the cohomologies of its fibers, not induced by an action on the fibers themselves.
More precisely, Springer worked with an analogue for the Lie algebra of $G$, and over a finite field, not over $\bb{C}$.
The Fourier--Deligne transform intertwines his action with a monodromy action over the regular semisimple locus of the Lie algebra.

Up to a sign twist, the action of $W$ can also be constructed from convolution diagrams involving the \dfemph{Steinberg variety}
\begin{align}
\cal{Z} = Y_e^\unip \times_{G^\unip} Y_e^\unip.
\end{align}
Convolution turns the Borel--Moore homology of $\cal{Z}$ into a graded algebra that acts on the cohomology of any Springer fiber.
Motivated by~\cite{trinh}, we introduce
\begin{align}\label{eq:steinberg}
\bar{\cal{Z}}_\z 
	&= Y_e^\unip \times_G \bar{Y}_\z.
\end{align}
The Borel--Moore homology $\ur{H}_\ast^\ur{BM}(\cal{Z})$ then acts by convolution on the compactly-supported cohomology $\ur{H}_c^\ast(\bar{\cal{Z}}_\z)$.

As in~\cite{trinh}, it is more convenient to work $G$-equivariantly.
We have $\ur{H}_\ast^{\ur{BM}, G}(\cal{Z}) \simeq \bb{C}W \otimes \Sym^\ast(\mathbb{X})$, where $\mathbb{X}$ denotes the character lattice in the root datum of $G$, on which $W$ acts as a reflection group, and $\Sym^\ast(\mathbb{X})$ denotes the symmetric algebra generated by $\mathbb{X}$ in degree $2$.
Let $\mathbb{S}_\sf{q} \colon W \to \bb{Z}[\![\sf{q}]\!]^\times$ be the graded character 
\begin{align}
\mathbb{S}_\sf{q}(w) \vcentcolon= \sum_i \sf{q}^i \tr(w \mid \Sym^i(\mathbb{X})) = \frac{1}{\det(1 - \sf{q}w \mid \mathbb{X})}.
\end{align}
With this notation, we can state a unipotent counterpart to \Cref{thm:rs}.
In what follows, recall that $\z$ is \dfemph{rationally smooth} if and only if the Kazhdan--Lusztig polynomial of $(e, \z)$ is trivial~\cite{kl}.

\begin{thm}\label[thm]{thm:unipotent}
Fix a connected complex reductive group $G$ with Weyl group $W$.
If $\z \in W$ is rationally smooth, then for all $w \in W$, we have
\begin{align}
\sum_j {(-\sf{v})^j} \tr(w \mid \ur{H}_{c, G}^j(\bar{\cal{Z}}_\z))
	= (-1)^{\ell(w)} \sf{v}^{2\dim G + \ell(\z)}
		\mathbb{S}_{\sf{v}^2}(w) \tau_{\z, \sf{v}}(w),
\end{align}
where the $W$-action on the equivariant compactly-supported cohomology arises from convolution. 
%In particular, the latter vanishes in odd degrees.
\end{thm}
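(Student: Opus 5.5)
The plan is to reduce \Cref{thm:unipotent} to the trace computation of~\cite{trinh}, with rational smoothness used to replace $\bar{Y}_\z$ by a constant sheaf. First, since $\z$ is rationally smooth, the Kazhdan--Lusztig polynomials $P_{y,\z}$ are trivial for all $y \leq \z$. The map $\bar{Y}_\z \to G$ is built from the Bruhat stratification, so $\bar{Y}_\z$ is rationally smooth. Hence its intersection complex is the shifted constant sheaf, and proper base change identifies
\begin{align}
\ur{H}_{c,G}^\ast(\bar{\cal{Z}}_\z) \simeq \ur{H}_{c,G}^\ast\bigl(G^\unip,\ \pi_{e,!}\bb{Q}_\ell \otimes \bar\pi_{\z,!}\ur{IC}\bigr)
\end{align}
up to the shift $\dim G + \ell(\z)$. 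Here $\pi_e \colon Y_e^\unip \to G^\unip$ is the Springer resolution and $\bar\pi_\z \colon \bar{Y}_\z \to G$. The convolution action of $\ur{H}^{\ur{BM},G}_\ast(\cal{Z})$ is the action of $\End(\pi_{e,!}\bb{Q}_\ell)$ on the first factor. Up to the sign character $(-1)^{\ell(w)}$, this is the Springer action of $W$.

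Second, decompose both pushforwards. The Springer sheaf splits as $\bigoplus_{\chi} V_\chi \otimes \ur{IC}_\chi$ over $\chi \in \Irr(W)$, indexed by pairs (unipotent class, local system). Next, restrict $\bar\pi_{\z,!}\ur{IC}$ to $G^\unip$ and extract its multiplicity in each $\ur{IC}_\chi$. By Lusztig's character-sheaf theory (\cite{lusztig_84} and the character sheaves papers), $\bar\pi_{\z,!}\ur{IC}$ is a sum of unipotent character sheaves $A$, with graded multiplicities given by $\psi_\sf{v}(c_\z)$ through the almost-character/Fourier matrix. Restricting each $A$ to the unipotent variety gives a combination of $\ur{IC}_\chi$, again governed by $\{\chi,\psi\}$. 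This is exactly the combination shown in~\cite{trinh} to produce $\tau_{\z,\sf{v}}$. Concretely, I would cite the formula from~\cite{trinh} for the equivariant Hom-pairing $\sum_j \sf{v}^j\dim \Ext^j_G(\ur{IC}_\chi, \bar\pi_{\z,!}\ur{IC}|_{G^\unip})$ in terms of $\{\chi,\psi\}\psi_\sf{v}(c_\z)$. The Ext groups appear because compactly supported cohomology of a tensor product is dual to an Ext via Verdier duality.

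Third, compute the graded $W$-trace. The $V_\chi$-isotypic part contributes $\chi(w)$ times the graded dimension. The equivariant cohomology $\ur{H}_G^\ast(\mathrm{pt})$ together with the Springer-fiber cohomology assemble, as a $W$-module, into $\Sym^\ast(\mathbb{X})$ tensored with the result above. I would justify this using purity and equivariant formality of Springer fibers, following~\cite{trinh}. This yields the factor $\mathbb{S}_{\sf{v}^2}(w)$. Purity also gives vanishing in odd degrees, so the sign $(-\sf{v})^j$ is harmless. Tracking the shifts $2\dim G$ (from $\ur{H}_{c,G}$ and $\dim \cal{B}$ terms) and $\ell(\z)$ (from the IC normalization) gives the stated prefactor.

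The main obstacle is the second step. One must match Lusztig's restriction of character sheaves to the unipotent variety with the \emph{graded} (Frobenius-weight) multiplicities, so that the exotic Fourier transform appears with exactly $\psi_\sf{v}(c_\z)$ and no extra $\sf{v}$-twists. I would attack it over $\bb{F}_q$: pass to characteristic functions, use Lusztig's theorem that unipotent character sheaves have characteristic functions equal to almost characters, and use Green-function orthogonality. Then I would lift back using purity, which makes the point counts determine the graded traces.
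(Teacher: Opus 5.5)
Your first step agrees with the paper's \Cref{lem:hom-spr}: Verdier duality and base change identify $\ur{H}_{c, G}^i(\bar{\cal{Z}}_\z)^\vee$ with $\Hom^{2N - i}(\bar{K}_{\z, G}|_{G^\unip}, \cal{S}_G)$, and convolution becomes composition. There is, however, a genuine gap in your second step.

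\textbf{The gap.} The simple summands of the ${}^p\cal{H}^i(\bar{K}_\z)$ are of two kinds: the $\cal{A}_\chi$ coming from the Grothendieck alteration, and \emph{cuspidal} unipotent character sheaves.
\begin{itemize}
\item[(a)] For the $\cal{A}_\chi$ there is no second Fourier transform. $\cal{A}_\chi[2N - \dim G]|_{G^\unip}$ is exactly the Springer summand $\cal{S}_\chi$. The pairing $\{-, -\}$ enters only once, through Lusztig's multiplicity formula (\Cref{thm:exotic}). Applying it again at the restriction step, as you describe, gives the wrong answer in general, because the truncated Fourier matrix does not square to the identity.
\item[(b)] For the cuspidal summands, your claim that the restriction to $G^\unip$ is a combination of the $\ur{IC}_\chi$ is false. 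In type $G_2$ there is a cuspidal unipotent character sheaf supported on the closure of the class $G_2(a_1)$, with the sign local system of $A_u \simeq S_3$. This pair is not in the image of the Springer correspondence (compare the labels for type $G_2$ in \Cref{sec:examples}). Such a sheaf occurs in $\bar{K}_\z$ for suitable $\z$, and every element of $W(G_2)$ is rationally smooth, so the issue arises under your hypothesis.
\item[(c)] These summands also cannot be discarded by ``extracting multiplicities of the $\ur{IC}_\chi$''. $\Hom^\ast$ between non-isomorphic simple equivariant perverse sheaves need not vanish.
\end{itemize}
The missing ingredient is \Cref{lem:rr}: $\Hom^\ast(\cal{A}_G|_{G^\unip}, \cal{S}_G) = 0$ for cuspidal $\cal{A}$. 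The paper gets this in two moves. First, it checks through the classification (see \cite{aubert}) that $\cal{A}|_{G^\unip}$ is zero or a shifted cuspidal perverse sheaf in the sense of \cite{rr}. Second, it applies Lusztig's orthogonality of generalized Springer blocks \cite[Theorem~3.5]{rr}. Only then does the pairing reduce to the spaces $\Hom^\ast(\cal{S}_{\chi, G}, \cal{S}_G)$.

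\textbf{Other points.} What you call the main obstacle is in fact a citation. Lusztig's theorem (\cite[Corollary~14.11]{lusztig_85_3} with \cite[Theorem~23.1(c)]{lusztig_86_5}, spread out to characteristic zero) already gives the graded multiplicities $\sum_i (-\sf{v})^i (\cal{A}_\chi : {}^p\cal{H}^i(\bar{K}_\z))$. No point counting or Green-function orthogonality is needed.

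Likewise, your third step is just \Cref{thm:end-spr}. It gives $\Hom^{2j}(\cal{S}_{\chi, G}, \cal{S}_G) \simeq \Sym^j(\QL \otimes \mathbb{X}) \otimes V_\chi$ and odd vanishing directly, with no separate equivariant-formality argument.

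For the prefactor: $\ur{H}_{c, G}^j(\bar{\cal{Z}}_\z)$ is nonzero as $j \to -\infty$, so the computation naturally produces $\sf{v}^{4N + \ell(\z)}\, \mathbb{S}_{\sf{v}^{-2}}(w)\, \tau_{\z, \sf{v}}(w)$. The passage to $\mathbb{S}_{\sf{v}^2}$ and the sign come from the rational-function identity $\mathbb{S}_{\sf{q}^{-1}}(w) = \det(-w \mid \mathbb{X})\, \sf{q}^r\, \mathbb{S}_{\sf{q}}(w)$, with $r$ the rank of $G$. They do not come from a sign twist of the convolution action; the paper uses the untwisted action of $\Hom^0(\cal{S}_G, \cal{S}_G) \simeq \QL W$. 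Since $\det(-w \mid \mathbb{X}) = (-1)^{r + \ell(w)}$, you will need to track the extra $(-1)^r$ against your conventions.

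Finally, black-boxing \cite{trinh} does not bypass any of this. Its formula concerns the open pieces $\cal{Z}_y$ and the elements $\delta_y$, at the level of virtual weight polynomials. You would still need additivity over $y \leq \z$ and purity of $\ur{H}_{c, G}^\ast(\bar{\cal{Z}}_\z)$, and the paper derives that purity from the direct computation.
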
 

In \Cref{sec:unipotent}, we prove this result, and sketch a generalization that applies to all $\z$, not just rationally smooth $\z$.

We mentioned earlier that the Shareshian--Wachs symmetric function for a unit interval graph is essentially the Frobenius characteristic of $\tau_{\z, \sf{v}}$ for some $312$-avoiding element $\z \in S_n$.
Proposition 3.1 of~\cite{haiman} shows that if $\z$ is a $312$-avoiding, then its Schubert variety is smooth, so $\z$ is rationally smooth.

When $\mathbb{X}$ has rank $n$, Frobenius characteristic transports the pointwise product with $\mathbb{S}_\sf{q}$ to the plethystic transform $X_i \mapsto X_i/(1 - \sf{q})$ in the symmetric-function variables $X_1, X_2, \ldots$
The plethystic transforms of the Shareshian--Wachs functions appear in the work of Carlsson--Mellit on the shuffle conjecture, where they are called \emph{characteristic functions of Dyck paths}~\cite{cm}.
As explained in~\cite{ap}, they are precisely the Lascoux--Leclerc--Thibon (LLT) polynomials for tuples of shifted partitions in which each partition has size $\leq 1$.
Hence, \Cref{thm:unipotent} provides a new algebro-geometric interpretation of these \emph{unicellular} LLT polynomials.

\begin{rem}
In~\cite{kato}, S.~Kato geometrizes Shareshian--Wachs functions in terms of varieties over the affine Grassmannian of $\GL_n$.
It would be interesting to know whether the map $\bar{\cal{Z}}_\z \to G^\unip$, for $312$-avoiding $\z \in S_n$, is related to Kato's work: say, via Lusztig's embedding of the nilpotent cone into the affine Grassmannian.
\end{rem} 

\subsection{The Springer Correspondence}\label[subsection]{subsec:springer}

Returning to \Cref{conj:haiman}, we next seek to generalize the characters $\ind_\mu$ on the right-hand side of \eqref{eq:haiman-iota}.
It turns out that simply replacing them with the characters of $W$ induced from trivial characters of parabolic subgroups does not work,
% even in type $B_2$,
because these are too few.
Fortuitously, the work of Springer discussed above offers another generalization.

In what follows, for all $u \in G^\unip(\bb{C})$, we replace the notation $Y_{e, u}$ with the more standard notation $\cal{B}_u$.
Explicitly,
\begin{align}
\cal{B}_u = \{B \in \cal{B} \mid u \in B\}.
\end{align}
%Recall that $W$ acts on the cohomology of the Springer fiber $\cal{B}_u$.
A purity result shows that $\ur{H}^i(\cal{B}_u)$, like $\ur{IH}^\ast(\bar{Y}_{\z, g})$, vanishes for odd $i$.

Let $C_u \subseteq G$ be the conjugacy class of $u$ and $G_u \subseteq G$ its centralizer.
Via the isomorphism of varieties $C_u \simeq G/G_u$, any representation of the component group $A_u \vcentcolon= \pi_0(G_u)$ defines a local system on $C_u$.
The cohomology of $\cal{B}_u$ decomposes into pieces indexed by such local systems.
Indeed, the $G$-action on $\cal{B}$ restricts to a $G_u$-action on $\cal{B}_u$, which induces an $A_u$-action on $\ur{H}^\ast(\cal{B}_u)$.
The actions of $W$ and $A_u$ on $\ur{H}^\ast(\cal{B}_u)$ centralize each other.
For any $\kappa \in \Irr(A_u)$, we write $\ur{H}^\ast(\cal{B}_u)_\kappa$ to denote the multiplicity space of $\kappa$ in $\ur{H}^\ast(\cal{B}_u)$.

%Let $d_u$ be the dimension of $\cal{B}_u$.
The \dfemph{Springer correspondence} states that each irreducible character $\psi \in \Irr(W)$ is afforded by the $W$-action on $\ur{H}^{2\dim \cal{B}_u}(\cal{B}_u)_\kappa$ for some unipotent $u$ and $\kappa \in \Irr(A_u)$, and that the pair $(u, \kappa)$ is uniquely determined up to conjugacy by $\psi$.
%(The vector space $\ur{H}^{2d_u}(\cal{B}_u)_\kappa$ vanishes for other $(u, \kappa)$.)
%, in which case the orbit $[u, \kappa]$ is said to be \dfemph{cuspidal}.
%Lusztig showed~\cite{lusztig_84_intersection} that there are very few cuspidal local systems:
%If $G$ is almost-simple and adjoint, then there is at most one, and only in types $B$, $C$, $D$, $G_2$, $F_4$, $E_8$.
For such $\psi$ and $(u, \kappa)$, let $\spr_{\psi, G} = \spr_{u, \kappa} \colon W \to \bb{Z}$ be defined by
\begin{align}\label{eq:spr}
\spr_{\psi, G}(w) = \spr_{u, \kappa}(w) \vcentcolon= \sum_i \tr(w \mid \ur{H}^{2i}(\cal{B}_u)_\kappa).
\end{align}
These functions depend on the root system of $G$, though not on the isomorphism class of $G$ itself.
In type $A$, if $G$ has connected center, then the groups $A_u$ are trivial, and $\spr_{\chi_\mu, G}$ recovers $\ind_\mu$.
In other classical types, D.~Kim gives a purely combinatorial formula for a coarser function, the character of the total cohomology $\ur{H}^\ast(\cal{B}_u)$, in terms of modified Kostka polynomials~\cite{kim}.
In all types, the linear operator on class functions that sends $\psi \mapsto \spr_{\psi, G}$ is unipotent, hence invertible.

For any $\z \in W$, let $\alpha_{\psi, G}^\z(\sf{v}) \in \bb{Z}[\sf{v}^{\pm 1}]$ be the (necessarily palindromic) Laurent polynomials indexed by $\psi \in \Irr(W)$ such that
\begin{align}\label{eq:main}
\tau_{\z, \sf{v}}
	= \sum_{\psi \in \Irr(W)} \alpha_{\psi, G}^\z(\sf{v}) \spr_{\psi, G}
\end{align}
as functions on $W$.
%\Cref{thm:rs} shows that these virtual multiplicities determine and are determined by the monodromy of the closed Lusztig varieties $\bar{Y}_{\z, g}$ for regular semisimple $g \in G^\rs(\bb{C})$.
It would be pleasant to generalize \Cref{conj:haiman} by generalizing Haiman's $\alpha_\mu^\z$ to our $\alpha_{\psi, G}^\z$.
Unfortunately, the nonzero coefficients of $\alpha_{\psi, G}^\z$ can fail to be positive.
In many cases, they are all negative, but in rare cases, they have mixed signs.
They can also fail to form a unimodal sequence, even in absolute value.

\begin{rem}
A result from the character theory of finite reductive groups, the \emph{orthogonality of Green functions}, implies that up to a polynomial denominator, the function $\sf{v}^{\ell(\z)} \mathbb{S}_{\sf{v}^2} \cdot \tau_{\z, \sf{v}}$ in \Cref{thm:unipotent} is a $\bb{Z}[\sf{v}^2]$-linear combination of the functions $\spr_{u, a, \sf{v}^2} \colon W \to \bb{Z}[\sf{v}^2]$ defined by
\begin{align} 
\spr_{u, a, \sf{v}^2}(w)
	= \sum_i \sf{v}^{2i}
		\sum_{\kappa \in \Irr(A_u)} \kappa(a) \tr(w \mid \ur{H}^{2i}(\cal{B}_u)_\kappa),
\end{align}
as we run over $u \in G^\unip(\bb{C})$ and $a \in A_u$.
Elsewhere, we will discuss a geometric interpretation of the coefficients in this expansion, which superficially resembles \eqref{eq:main}.
But so far, we do not know any direct relationship between the two expansions.
\end{rem} 

We introduce the following shorthand for the properties that interest us:
%\begin{array}{r@{\::\:}l}
\begin{align}
(\vartriangle)_{\psi, G}^\z \colon 
	&\text{the nonzero coefficients of $\alpha_{\psi, G}^\z$ are unimodal in absolute value},\\
(\pm)_{\psi, G}^\z \colon 
	&\text{the nonzero coefficients of $\alpha_{\psi, G}^\z$ are either all positive or all negative},\\
(+)_{\psi, G}^\z \colon 
	&\text{the nonzero coefficients of $\alpha_{\psi, G}^\z$ are all positive}.
\end{align}
The examples in \Cref{sec:examples} lead us to believe that as the rank of $W$ grows, the elements $\z$ where $(\vartriangle)_{\psi, G}^\z$ and $(\pm)_{\psi, G}^\z$ hold for all $\psi$, but not $(+)_{\psi, G}^\z$, come to predominate in $W$.
%The failures occur only when $\psi$ belongs to a fixed, relatively small subset $\Irr(W)' \subseteq \Irr(W)$.
%The elements of $\Irr(W)'$ tend to correspond, under Springer, to local systems on unipotent conjugacy classes whose dimension is either relatively large or relatively small.
%For instance, the characters of the reflection representation and its Lusztig--Spaltenstein dual~\cite[\S{13.4}]{carter} often belong to $\Irr(W)'$.
%The former maps to the trivial local system on the subregular class, the latter to the trivial local system on the minimal nontrivial class.
At the same time, we believe that the irreducible characters $\psi$ where $(\vartriangle)_{\psi, G}^\z$ and $(\pm)_{\psi, G}^\z$ hold for all $\z$, but not $(+)_{\psi, G}^\z$, come to predominate in $\Irr(W)$.

\begin{quest}
Given $\omega \in \{\vartriangle, \pm, +\}$:
\begin{enumerate} 
\item[(I)]
	What properties of $\z \in W$ ensure that $(\omega)_{\psi, G}^\z$ holds for all $\psi$?

\item[(II)]
	What properties of $\psi \in \Irr(W)$ ensure that $(\omega)_{\psi, G}^\z$ holds for all $\z$?

\end{enumerate}
\end{quest} 

\begin{rem} 
The truth of these properties does depend on the root system of $G$, not just on $W$:
In the examples in \Cref{sec:examples}, they fail for more pairs $(\z, \psi)$ in type $C_n$ than in type $B_n$.
%In general, the respective subsets of $W \times \Irr(W)$ where the failures occur are incomparable as subsets.
\end{rem} 

\begin{rem}\label[rem]{rem:multiplicative}
Suppose that $G$ is isogenous to $G_1 \times G_2$.
Let $W_i$ be the Weyl group of $G_i$, so that $W \simeq W_1 \times W_2$.
Then the pairing $\{-, -\}$ and the functions $\psi \mapsto \spr_{\psi, G}$ are multiplicative with respect to the bijection $\Irr(W) \simeq \Irr(W_1) \times \Irr(W_2)$.
We deduce that if $(\omega)_{\psi_1, G_1}^{\z_1}$ and $(\omega)_{\psi_2, G_2}^{\z_2}$ hold for some $\z_i \in W_i$ and $\psi_i \in \Irr(W_i)$, then $(\omega)_{\psi, G}^\z$ holds for $\z = (\z_1, \z_2)$ and $\psi = \psi_1 \times \psi_2$.

In this way, we can reduce the study of $(\omega)_{\psi, G}^\z$ to the cases where $W$ is irreducible as a Coxeter group.
In particular, we can reduce the cases where $W$ is a product of symmetric groups and $\omega \in \{\vartriangle, +\}$ to \Cref{conj:haiman}.
\end{rem} 

To answer question (I), we prove results that directly generalize the three parts of~\cite[Proposition~4.1]{haiman}.

In~\cite{trinh}, we proved a result essentially stating that $\tau_{\z, \sf{v}}$ intertwines parabolic inclusions $W' \subseteq W$ with induction of characters from $W'$ to $W$.
In \Cref{sec:parabolic}, we prove that the characters $\spr_{\psi, G}$ enjoy a similar compatibility with induction (\Cref{thm:ind}), refining an observation of Alvis--Lusztig: 
This may be of independent interest.
From combining these ``induction theorems'', we obtain the result below: in fact, a more precise form generalizing part (2) of~\cite[Proposition~4.1]{haiman}.

\begin{prop}\label[prop]{prop:parabolic}
If $\z$ lies in a parabolic subgroup $W' \subseteq W$, corresponding to a Levi subgroup $G' \subseteq G$, then the $\alpha_{\psi, G}^\z$ for $\psi \in \Irr(W)$ are nonnegative integer sums of the $\alpha_{\psi', G'}^\z$ for $\psi' \in \Irr(W')$.
In particular:
If $(+)_{\psi', G'}^\z$ holds for all $\psi'$, then $(+)_{\psi, G}^\z$ holds for all $\psi$.
If $(\vartriangle)_{\psi', G'}^\z$ and $(+)_{\psi', G'}^\z$ hold for all $\psi'$, then $(\vartriangle)_{\psi, G}^\z$ and $(+)_{\psi, G}^\z$ hold for all $\psi$.
\end{prop}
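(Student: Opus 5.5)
The plan is to combine the two ``induction theorems'' mentioned just before the statement. The first, from~\cite{trinh}, says that for $\z \in W'$ we have
\begin{align}
\tau_{\z, \sf{v}}^{W} = \Ind_{W'}^{W} \tau_{\z, \sf{v}}^{W'},
\end{align}
where the left side is formed with $c_\z$ viewed in $H_W$ and the right side with $c_\z$ viewed in $H_{W'}$. This uses that the Kazhdan--Lusztig basis element $c_\z$ of $H_{W'}$ maps to that of $H_W$ under the parabolic inclusion $H_{W'} \subseteq H_W$. The second is the Springer induction theorem, \Cref{thm:ind}, refining Alvis--Lusztig. I would use it in the form
\begin{align}
\Ind_{W'}^{W} \spr_{\psi', G'} = \sum_{\psi \in \Irr(W)} m_{\psi', \psi}\, \spr_{\psi, G}, \qquad m_{\psi', \psi} \in \bb{Z}_{\geq 0}.
\end{align}
Geometrically, one expects this from the induction of local systems from a Levi: $\ur{H}^\ast(\cal{B}_u)$ for $G$ is glued from the Springer fibers of $G'$ via parabolic induction, and this decomposes with nonnegative multiplicities according to the pairs $(u, \kappa)$ of $G$.

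With these in hand, I apply the expansion \eqref{eq:main} for $G'$:
\begin{align}
\tau_{\z, \sf{v}}^{W'} = \sum_{\psi'} \alpha_{\psi', G'}^\z(\sf{v})\, \spr_{\psi', G'}.
\end{align}
Inducing both sides to $W$ gives
\begin{align}
\tau_{\z, \sf{v}}^{W} = \sum_{\psi} \Bigl(\sum_{\psi'} m_{\psi', \psi}\, \alpha_{\psi', G'}^\z(\sf{v})\Bigr) \spr_{\psi, G}.
\end{align}
The functions $\spr_{\psi, G}$ form a basis of class functions, because the operator $\psi \mapsto \spr_{\psi, G}$ is unipotent. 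So the coefficients are unique, and
\begin{align}
\alpha_{\psi, G}^\z = \sum_{\psi'} m_{\psi', \psi}\, \alpha_{\psi', G'}^\z.
\end{align}
This is the first claim.

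The consequences follow from this formula.
\begin{enumerate}
\item[(i)] \emph{Positivity.} A nonnegative integer combination of Laurent polynomials with positive nonzero coefficients again has positive nonzero coefficients. This uses no cancellation, since every term is nonnegative.
\item[(ii)] \emph{Unimodality.} Sums of unimodal sequences are not unimodal in general. Here, however, every $\alpha_{\psi', G'}^\z$ is palindromic about the same center. This center is $\sf{v}^0$, from invariance of $c_\z$ under the bar involution. A sum of nonnegative sequences that are symmetric and unimodal about a common center is again symmetric and unimodal.
\item[(iii)] \emph{Internal gaps.} For the sum to count as a sequence with no internal zeros, I need the supports to be centered intervals. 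Positivity together with palindromicity and unimodality gives exactly this: each support is a set of exponents of fixed parity centered at $0$, because $\tau$ involves only $\sf{v}^{\ell(\z)}$ shifts. I will check that all these parities agree, which they do since they depend only on $\ell(\z)$. This is why the statement requires $(+)$ alongside $(\vartriangle)$.
\end{enumerate}

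I expect the main obstacle to be the Springer induction theorem, namely showing that $\Ind_{W'}^W \spr_{\psi', G'}$ is a \emph{nonnegative} combination of the $\spr_{\psi, G}$, with the $A_u$-isotypic parts handled correctly. The natural route is Lusztig's generalized Springer theory. The Springer sheaf for $G$ is the parabolic induction of the Springer sheaf for $G'$, with the action of the relative Weyl group $W$. Restricting the $W$-action to $W'$ identifies the $W'$-isotypic pieces, and Frobenius reciprocity converts this into the statement about induction. The total cohomology decomposes by the Mackey/restriction formula for parabolic restriction of Springer sheaves. If \Cref{thm:ind} is already stated earlier in exactly this form, the proposition is immediate from the computation above. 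Otherwise I would prove it first by comparing the restriction of $\ur{H}^\ast(\cal{B}_u)_\kappa$ to $W'$ with the Springer fibers of $G'$, via the Lusztig--Spaltenstein induction of unipotent classes.
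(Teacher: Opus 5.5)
Your main argument is the paper's own proof of the refinement \Cref{cor:parabolic}. You expand $\tau_{\z,\sf{v}}^{W'}$ in the $\spr_{\psi',G'}$, induce using \Cref{thm:tau-ind}, apply \Cref{thm:ind}, and use uniqueness of the expansion in the $\spr_{\psi,G}$. \Cref{thm:ind} is already stated in the refined form you need. Concretely, $m_{\psi',\psi} = (\kappa, \Ind_{A'_u}^{A_u}\kappa')_{A_u}$ when $\psi'$ and $\psi$ correspond under Springer to $[u',\kappa']$ and $[u,\kappa]$ with $u'$ conjugate to $u$ in $G$, and $m_{\psi',\psi} = 0$ otherwise. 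The paper proves it by making Treumann's $\ur{U}(1)$-localization argument $A'_u$-equivariant, with \Cref{lem:inj} used to view $A'_u \subseteq A_u$. So your fallback route is unnecessary. It would also aim at the wrong thing: Lusztig--Spaltenstein induction concerns induced classes, whereas this formula compares $\cal{B}'_u$ and $\cal{B}_u$ for the same $u$.

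The step that does not work as written is (iii). Parity only shows that every $\alpha_{\psi',G'}^\z$ is supported in $\ell(\z)+2\bb{Z}$, with $\ell(\z)$ the same in $W'$ and $W$. It says nothing about gaps inside that class, and neither do positivity or palindromicity. For example, take $\ell(\z)$ even. Then $5\sf{v}^4+5\sf{v}^{-4}$ and $\sf{v}^2+\sf{v}^{-2}$ are each positive and palindromic, with unimodal nonzero coefficients. Yet their sum has nonzero coefficients $(5,1,1,5)$. So if $(\vartriangle)$ is read literally, with every zero skipped, the unimodality consequence does not follow from the nonnegative-sum formula.

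What makes the deduction work is reading $(\vartriangle)$ as unimodality of the coefficient sequence along $\ell(\z)+2\bb{Z}$, skipping only the parity-forced zeros, as in the tables of \Cref{sec:examples}. Under that reading, nonnegativity plus unimodality already rules out internal zeros, and your common-center argument in (ii) finishes the proof. The paper treats this ``in particular'' deduction as immediate, so once you make that reading explicit and replace your justification in (iii), your proposal is complete.
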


%By \Cref{rem:multiplicative}, \Cref{conj:haiman} would imply $(\vartriangle)_{\psi, G}^\z$ and $(+)_{\psi, G}^\z$ for all $\z$ and $\psi$ in all cases where $W$ is a product of symmetric groups.
%We deduce:

%\begin{cor}
%If \Cref{conj:haiman} holds, then $(\vartriangle)_{\psi, G}^\z$ and $(+)_{\psi, G}^\z$ hold for any $G$ with Weyl group $W$, any $\z$ in a parabolic subgroup of $W$ isomorphic to a product of symmetric groups, and any $\psi \in \Irr(W)$.
%\end{cor}

We can also give a sufficient condition for the $\alpha_{\psi, G}^\z$ to vanish.
Recall that the Kazhdan--Lusztig basis defines a partition of $W$ into subsets $\cal{F}$ called \dfemph{two-sided cells}, arranged in a partial order.
Each $\cal{F}$ gives rise to a module over the Hecke algebra, and each simple Hecke module occurs in exactly one such module, producing a map $\chi \mapsto \cal{F}(\chi)$ from irreducible characters of $W$ to two-sided cells.
When $W = S_n$, this map is a bijection, and the partial order on two-sided cells recovers the reverse dominance order on partitions of $n$.
So the following result, explained in \Cref{sec:cells}, generalizes part (3) of~\cite[Proposition~4.1]{haiman}.

\begin{prop}\label[prop]{prop:cells}
If $\psi \in \Irr(W)$ and $\z \in \cal{F}$ for some two-sided cell $\cal{F} < \cal{F}(\psi)$, then $\alpha_{\psi, G}^\z(\sf{v}) = 0$.
\end{prop}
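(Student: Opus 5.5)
The plan is to show that both sides of \eqref{eq:main} live in a subspace of class functions cut out by the two-sided cell order, and that the matrix relating $\{\spr_{\psi, G}\}$ to $\Irr(W)$ is triangular with respect to that order. Throughout, I fix the orientation of $\leq$ on two-sided cells so that $\psi'_\sf{v}(c_\z) \neq 0$ forces $\cal{F}(\psi') \leq \cal{F}(\z)$, where $\cal{F}(\z)$ denotes the cell containing $\z$. This is the standard vanishing for Kazhdan--Lusztig basis elements acting on cell modules, and it is the direction in which the statement is nontrivial. For any two-sided cell $\cal{F}$, let $V_{\leq \cal{F}}$ be the $\bb{Q}(\sf{v})$-span of the $\chi \in \Irr(W)$ with $\cal{F}(\chi) \leq \cal{F}$.

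\emph{Step 1: support of $\tau_{\z, \sf{v}}$.} The exotic Fourier pairing $\{\chi, \psi'\}$ vanishes unless $\chi$ and $\psi'$ lie in the same Lusztig family, that is, unless $\cal{F}(\chi) = \cal{F}(\psi')$. Combined with the vanishing of $\psi'_\sf{v}(c_\z)$ recalled above, the definition of $\tau_{\z, \sf{v}}$ gives $\tau_{\z, \sf{v}} \in V_{\leq \cal{F}(\z)}$.

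\emph{Step 2: triangularity of Springer characters.} I would show that for every $\psi$,
\begin{align}
\spr_{\psi, G} \in \psi + \sum_{\chi \neq \psi,\ \cal{F}(\chi) \leq \cal{F}(\psi)} \bb{Z}\chi .
\end{align}
Write $\psi \leftrightarrow (u, \kappa)$. The top degree $\ur{H}^{2\dim\cal{B}_u}(\cal{B}_u)_\kappa$ contributes exactly $\psi$. Every other constituent $\chi \leftrightarrow (u', \kappa')$ of $\ur{H}^\ast(\cal{B}_u)_\kappa$ satisfies $u \in \overline{C_{u'}}$ with $C_{u'} \neq C_u$, by the Borho--MacPherson / Lusztig description of the Springer sheaf. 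It therefore suffices to know that $u \in \overline{C_{u'}}$ implies $\cal{F}(\chi) \leq \cal{F}(\psi)$. I would deduce this from two facts: the map from orbits to families factors through Lusztig's special pieces, and the closure order on special orbits corresponds, via the Springer correspondence of special representations and Barbasch--Vogan/Lusztig duality, to the two-sided cell order in the chosen orientation.

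\emph{Step 3: conclusion.} Index the $\spr_{\psi, G}$ by a total order refining the cell order and, within a family, the closure order on orbits. Step 2 then makes the transition matrix unitriangular. Hence $\{\spr_{\psi, G} : \cal{F}(\psi) \leq \cal{F}\}$ is a basis of $V_{\leq \cal{F}}$ for every $\cal{F}$. By Step 1, expanding $\tau_{\z, \sf{v}}$ in this basis uses only $\spr_{\psi, G}$ with $\cal{F}(\psi) \leq \cal{F}(\z)$. By uniqueness of the expansion \eqref{eq:main}, $\alpha_{\psi, G}^\z = 0$ unless $\cal{F}(\psi) \leq \cal{F}(\z)$. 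If $\cal{F}(\z) < \cal{F}(\psi)$, antisymmetry of the partial order rules out $\cal{F}(\psi) \leq \cal{F}(\z)$, so $\alpha_{\psi, G}^\z = 0$.

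The main obstacle is Step 2: getting the direction of the comparison right and justifying it in all types, including non-special characters and nontrivial $\kappa$. The closure order on special orbits matching the cell order is a known theorem, but passing from arbitrary pairs $(u', \kappa')$ to their families requires care. I would first try an $\mathbf{a}$-function argument, using that constituents of $\ur{H}^\ast(\cal{B}_u)$ other than the top one have $\mathbf{a}$-value below $\dim \cal{B}_u$. If that only yields a weaker preorder, I would fall back on Lusztig's explicit special-piece decomposition.
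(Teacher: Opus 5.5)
Your route is the paper's. Step~1 is part~(1) of its proof. Steps~2--3 are part~(2), where the paper combines unitriangularity of $((\chi, \spr_{\psi, G})_W)_{\chi, \psi}$ for the closure order with \Cref{lem:partial-order}. The statement of your Step~2 is also exactly what Step~3 needs.

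\textbf{The gap is in how you propose to prove Step~2.} You reduce it to the implication ``$u \in \overline{C_{u'}}$ implies $\cal{F}(\chi) \leq \cal{F}(\psi)$'' for all Springer pairs. You justify this via the premise that the family of $(u, \kappa)$ is determined by $u$, through its special piece. Both claims are false once $\kappa$ is nontrivial.
\begin{itemize}
\item In type $B_3$, the class $331$ carries $1.2$, which is alone in its family, and $.3$, which lies in the family of $2.1$. So the family is not a function of the class.
\item In type $B_4$, take $\psi = .31 \leftrightarrow 33111(11)$ and $\chi = 11.2 \leftrightarrow 333$. Then $33111 \subset \overline{333}$. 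But the families $\{11.2\}$ and $\{211., 2.11, .31\}$ are incomparable in the two-sided order: their special classes $333$ and $51111$ are incomparable, and both families have $\mathbf{a}$-value $4$.
\end{itemize}
So knowing only that the other constituents of $\ur{H}^\ast(\cal{B}_u)_\kappa$ live on larger classes cannot give Step~2. Your $\mathbf{a}$-function fallback fails for the same reason, since the obstruction is between distinct families with equal $\mathbf{a}$. Special pieces do not see $\kappa$ at all.

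\textbf{What is missing is input about the isotypic pieces $\ur{H}^\ast(\cal{B}_u)_\kappa$ themselves.} In the example, the bad constituent is excluded for exactly this kind of reason. Apply \Cref{thm:ind} to the Levi $\GL_1 \times \mathrm{SO}_7$, where $A'_u \to A_u$ is an isomorphism of groups of order $2$. Using $\spr_{.3, G'} = .3$ in type $B_3$, this gives $\spr_{.31, G} = \Ind_{W(B_3)}^{W(B_4)}(.3) = .31 + .4 + 1.3$. So $11.2$ occurs in $\ur{H}^\ast(\cal{B}_u)$ only in the trivial isotypic part, namely in $\spr_{1.21, G} = \Ind_{W(B_3)}^{W(B_4)}(3. + 2.1 + 1.2)$.

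The same pair also contradicts \Cref{lem:partial-order} if that lemma is read as saying every $\prec_\unip$-relation implies the corresponding cell relation. So you cannot close the gap by citing the lemma in that form. You need the finer statement that cell-compatibility holds along the support of the Springer matrix. Once you have that, the inverse matrix is supported on the transitive closure of this support, and your Step~3 goes through unchanged.
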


Let $w_\circ$ be the longest element of $W$.
The following result generalizes part (1) of~\cite[Proposition~4.1]{haiman}.

\begin{cor}\label[cor]{cor:cells-bottom}
For the trivial character $1_W$, we have
\begin{align}
\alpha_{1_W, G}^{w_\circ}(\sf{v}) = \displaystyle\sum_{w \in W} \sf{v}^{2\ell(w) - \ell(w_\circ)}.
\end{align}
If $\psi$ is nontrivial, then $\alpha_{\psi, G}^{w_\circ}(\sf{v}) = 0$.
\end{cor}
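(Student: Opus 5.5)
The plan is to compute $\tau_{w_\circ,\sf{v}}$ directly and then expand it in the basis $\{\spr_{\psi,G}\}$. The starting point is that $\{w_\circ\}$ is the lowest two-sided cell, and it is attached only to the sign character $\varepsilon$. Recall that $c_{w_\circ} = \sf{v}^{-\ell(w_\circ)}\sum_{w\in W}\delta_w$, and that $\delta_s c_{w_\circ} = \sf{v}^2 c_{w_\circ}$ for every $s \in S$.

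First I compute $\psi_\sf{v}(c_{w_\circ})$ for each $\psi \in \Irr(W)$. The element $c_{w_\circ}$ spans a one-dimensional two-sided ideal of $H_W$ on which $H_W$ acts through the character $\delta_s \mapsto \sf{v}^2$. This character deforms the trivial character of $W$ in the paper's normalization, since $c_s = \sf{v}^{-1}(1+\delta_s)$. Hence $c_{w_\circ}$ acts by zero on every simple module except the one deforming $1_W$, and
\begin{align}
\psi_\sf{v}(c_{w_\circ}) = \begin{cases} \sf{v}^{-\ell(w_\circ)}\sum_{w \in W}\sf{v}^{2\ell(w)} & \psi = 1_W,\\ 0 & \text{otherwise.}\end{cases}
\end{align}
Consequently $\tau_{w_\circ,\sf{v}} = P(\sf{v})\sum_\chi \{\chi,1_W\}\chi$, where $P(\sf{v}) = \sum_{w} \sf{v}^{2\ell(w)-\ell(w_\circ)}$.

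Second, I use the fact from the review of the exotic Fourier transform in \Cref{sec:dl} that $1_W$ forms a family by itself, with $\{1_W,1_W\}=1$ and $\{\chi,1_W\}=0$ for $\chi\neq 1_W$. This gives $\tau_{w_\circ,\sf{v}} = P(\sf{v})\,1_W$.

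Third, I identify $1_W$ in the Springer basis. Under Springer's correspondence $1_W$ corresponds to the regular unipotent class with trivial $\kappa$. There $\cal{B}_u$ is a point and $A_u$ acts trivially on $\ur{H}^\ast(\cal{B}_u)$, so $\spr_{1_W,G} = 1_W$. I need to check that the normalization of the Springer action used in \eqref{eq:spr} really sends the top cohomology of the point fiber to $1_W$ rather than the sign character. This check is the main thing I expect could go wrong, because conventions differ by tensoring with sign; I would fix it against type $A$, where $\spr_{\chi_\mu,G}=\ind_\mu$ and $\ind_{(n)}$ is trivial.

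Finally, since $\psi \mapsto \spr_{\psi,G}$ is invertible, the expansion \eqref{eq:main} is unique. Therefore $\alpha_{1_W,G}^{w_\circ} = P$ and $\alpha_{\psi,G}^{w_\circ}=0$ for $\psi \neq 1_W$. As a cross-check, the vanishing also follows from \Cref{prop:cells}: $w_\circ$ lies in the lowest cell, which is strictly below $\cal{F}(\psi)$ for every $\psi\ne 1_W$ under the paper's convention that $\cal{F}(1_W)=\{w_\circ\}$.
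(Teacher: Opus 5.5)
Your proof is correct, but it takes a different route from the paper's. The paper gets $\alpha_{\psi,G}^{w_\circ}=0$ for $\psi\neq 1_W$ as a special case of \Cref{prop:cells}. First it identifies $\Irr(W)_{\{w_\circ\}}=\{1_W\}$, using part (1) and Lusztig's Lemma~5.14(iii), so that $\{w_\circ\}<\cal{F}(\psi)$ for every $\psi\neq 1_W$. That proposition in turn rests on three inputs: Barbasch--Vogan block-diagonality of $\{-,-\}$, Lusztig's result that $\prec_\unip$ refines $\prec_{LR}$, and unitriangularity of the Springer matrix. Only then does the paper compute the one remaining coefficient, $(1_W,\tau_{w_\circ,\sf{v}})_W=1_{W,\sf{v}}(c_{w_\circ})$.

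You instead compute $\tau_{w_\circ,\sf{v}}$ outright. The one-dimensional ideal spanned by $c_{w_\circ}$ acts by zero on every simple module except the deformation of $1_W$. The Fourier column of $1_W$ is a unit vector. Hence $\tau_{w_\circ,\sf{v}}=P(\sf{v})\,1_W=P(\sf{v})\,\spr_{1_W,G}$, and uniqueness of the expansion gives both claims at once. One could also read this identity off \Cref{thm:rs}, since $\bar{Y}_{w_\circ,g}=\cal{B}$ is a trivial family with trivial monodromy.

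Your route is more elementary and self-contained: it needs no cell order, no comparison with the unipotent closure order, and no triangularity. It is, however, special to $w_\circ$. The paper's route treats $w_\circ$ as one instance of a vanishing principle valid for every $\z$ in a lower cell, and that principle is what generalizes Haiman's Proposition~4.1.

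Two small points. First, the fact $\{\chi,1_W\}=\delta_{\chi,1_W}$ is not stated in the appendix, but it follows from the definition there. We have $\{\chi,1_W\}=(\rho_\chi,R_{1_W})_{G^F}$. Both $R_{1_W}=|W|^{-1}\sum_w R_w$ and $\rho_{1_W}$ are the trivial character of $G^F$; for $\rho_{1_W}$, this is because $\delta_w$ acts on the constant functions on $\cal{B}^F$ by $q^{\ell(w)}$. The paper's own proof also uses $\{1_W,1_W\}=1$ tacitly. Second, your opening sentence, that $\{w_\circ\}$ is attached to $\varepsilon$, is backwards in this paper's normalization. Here $\Irr(W)_{\{w_\circ\}}=\{1_W\}$ and $\Irr(W)_{\{e\}}=\{\varepsilon\}$, as your own first step shows. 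Your argument never uses that sentence, so no harm is done. Your worry about the normalization of $\spr_{1_W,G}$ is settled either by the type $A$ check you propose or by the paper's example showing $\spr_{\varepsilon,G}$ is the regular character, attached to $u=1$.
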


It seems tricky to formulate other uniform results about question (I).
For example, we do not know how to generalize~\cite[Proposition~4.2]{haiman}, which concerns the case where $\z$ is a Coxeter element:
For each Coxeter element $\z$ in type $D_4$, it turns out that $(+)_{\psi, G}^\z$ fails when $\psi$ is the character labeled $1.3$ in CHEVIE.

As another example:
At an earlier stage of this work, we had conjectured that if $W = W(B_n) = W(C_n)$ for some $n$ and $\z$ is rationally smooth, then $(\vartriangle)_{\psi, G}^\z$ and $(\pm)_{\psi, G}^\z$ hold for all $\psi$.
This is true up through rank $6$, but both properties fail in both type $B_7$ and type $C_7$ for many rationally smooth $\z$.
In the cases where $(\pm)_{\psi, G}^\z$ fails, $\ell(\z) \geq 18$.

\subsection{A New Conjecture}\label[subsection]{subsec:new}

Question (II) led us to \Cref{conj:main} below.
It suggests that some compatibility between inflation maps and the Springer correspondence 
allows the positivity and unimodality in Haiman's \Cref{conj:haiman} to be lifted to other Weyl groups.

Observe that there is a unique quotient map $W(G_2) \to W(A_2) = S_3$ sending simple reflections to simple reflections.
We call it the \dfemph{$G_2$-to-$A_2$ quotient}.

\begin{conj}\label[conj]{conj:main}
Suppose that $\psi \in \Irr(W)$ is inflated from a quotient $\varphi \colon W \to \Omega$.
If $\Omega$ is isomorphic to a product of symmetric groups, then $(\vartriangle)_{\psi, G}^\z$ and $(\pm)_{\psi, G}^\z$ hold for all $z \in W$.
Moreover, if $\varphi$ does not descend to the $G_2$-to-$A_2$ quotient, then $(+)_{\psi, G}^\z$ holds for all $\z$.
\end{conj} 

\begin{ex}
The trivial and sign characters of $W$ are inflated from $S_2 \simeq \{\pm 1\}$ along the sign character itself.
So \Cref{conj:main} predicts that if $\psi$ is either of these characters, then $(\vartriangle)_{\psi, G}^\z$ and $(+)_{\psi, G}^\z$ hold for all $\z$.

For the trivial character, we do not know a proof of these statements.
Compare to~\cite[Conjecture~4.1]{haiman}.

For the sign character $\varepsilon$, we claim that $\alpha_{\varepsilon, G}^e = 1$ and $\alpha_{\varepsilon, G}^\z = 0$ for all $\z \neq e$.
The latter is part (1) of \Cref{cor:cells}.
As for the former, \Cref{thm:rs} and the discussion in \S\ref{subsec:grothendieck} show that $\tau_{e, \sf{v}}$ is the character of the regular representation of $W$, which coincides with $\spr_{\varepsilon, G}$.
Thus, $\tau_{e, \sf{v}} = \spr_{\varepsilon, G}$, giving $\alpha_{\varepsilon, G}^e = 1$.
%This verifies \Cref{conj:main} for $\psi = \varepsilon$.
\end{ex} 

%When $W$ is irreducible, it is easier to classify quotients of $W$ than one might expect.
Maxwell showed~\cite{maxwell} that if $W$ is an irreducible finite Coxeter group and $\varphi \colon W \to \Omega$ is a quotient map, then either or both of the following must hold:
\begin{itemize} 
\item 	The longest element $w_\circ \in W$ is central, and $\varphi$ is reduction modulo $w_\circ$.
\item 	$\Omega$ is another Coxeter group, and $\varphi$ is a \dfemph{graph homomorphism}\footnote{So named because the graph homomorphisms from $(W, S)$ onto itself are the automorphisms induced by symmetries of its Dynkin diagram.}: that is, a quotient map that sends each simple reflection of $W$ either to a simple reflection of $\Omega$ or to the identity of $\Omega$.

\end{itemize}

\begin{ex}\label[ex]{ex:bc}
Take $W = W(B_n) = W(C_n)$.
Label the simple reflections of $W$ by $s_1, s_2, \ldots, s_n$, where $(s_1s_2)^4 = e$ and $(s_is_{i + 1})^3 = e$ for $i \geq 2$.
Then reduction modulo $ts_1ts_1$ is a graph homomorphism $\varphi_{1212} \colon W \to W(A_1 \times A_{n - 1}) = S_2 \times S_n$.

Following~\cite{carter}, we index the irreducible characters of $W$ by bipartitions of $n$: ordered pairs of partitions $(\xi, \eta)$ such that $n = |\xi| + |\eta|$.
Writing $\psi_{\xi, \eta}$ for the associated character, one checks that the following conditions are equivalent:
\begin{itemize} 
	\item	$\psi_{\xi, \eta}$ is inflated from $S_2 \times S_n$ along $\varphi_{1212}$.

	\item 	$\psi_{\xi, \eta}$ is inflated from a product of symmetric groups.
	
	\item 	One of $\xi$ or $\eta$ is empty.
	
\end{itemize}
Examining the third condition is how we discovered \Cref{conj:main}.

For instance, when $n = 4$, there are $20$ irreducible characters total. 
Of these, $10$ are inflated along $\varphi_{1212}$.
Of the remainder, there are $3$ where $(+)_{\psi, G}^\z$ fails in type $B_4$ for some $\z$, and $4$ where $(+)_{\psi, G}^\z$ fails in type $C_4$ for some $\z$.
\end{ex} 

%\begin{ex} 
%In type $F_4$, there are $25$ irreducible characters total.
%Of these, $9$ are inflated from the unique maximal quotient map to a product of symmetric groups.
%Of the remainder, there are $10$ where $(+)_{\psi, G}^\z$ fails for some $\z$.
%\end{ex} 

By \Cref{rem:multiplicative}, it is enough to check \Cref{conj:main} in the cases where $W$ is irreducible.
In \Cref{sec:examples}, we summarize the verification of \Cref{conj:main} for the following cases, using the GAP3 package CHEVIE~\cite{ghmp}:
\begin{enumerate}
\item 	$W$ irreducible of rank $\leq 5$.
\item 	$W$ irreducible of rank $\leq 6$, where we restrict $\z$ to be rationally smooth.
\end{enumerate}
As explained there, the data and scripts used in the verification are available at a webpage that also contains partial data in types $B_7$, $C_7$, $D_7$.
%In type $E$, \Cref{conj:main} is trivial because the Weyl groups contain simple groups as index-$2$ subgroups.

%When $W$ is large, the subset of rationally smooth elements is significantly smaller than $W$, yet still includes elements of substantial complexity, like the longest element.
%It is easier to test rational smoothness than smoothness, thanks to~\cite[Theorem~C]{carrell}.

There are other predictions that one could draw from the evidence in \Cref{sec:examples}, but perhaps none as striking as \Cref{conj:main}.
For example:
Given $\omega \in \{\vartriangle, \pm, +\}$, let $\Irr(W)_{G, \omega}$ be the set of $\psi \in \Irr(W)$ where $(\omega)_{\psi, G}^\z$ holds for all $\z \in W$.
In our examples, the sets $\Irr(W)_{G, \vartriangle}$ and $\Irr(W)_{G, \pm}$ are often equal, but not always; one may contain the other as a strict subset.
We have not found any cases where they are incomparable.
Moreover, they are noticeably larger than $\Irr(W)_{G, +}$.
It is necessarily true that $\Irr(W)_{G, +} \subseteq \Irr(W)_{G, \pm}$, but it is surprising to find that we always have $\Irr(W)_{G, +} \subseteq \Irr(W)_{G, \vartriangle}$ as well.
We expect that if $\psi$ is fixed, and $(\vartriangle)_{\psi, G}^\z$ or $(\pm)_{\psi, G}^\z$ fails for some $\z \in W$, then there is always some $\z' \in W$ where the nonzero coefficients of $\alpha_{\psi, G}^{\z'}$ are all \emph{negative}.

\subsection{Further Directions}

There are at least two.

\subsubsection{From Finite to Affine}

Affinization is the principle that structures for a complex reductive group $G$ often have analogues for its loop group $LG$ or the associated affine Kac--Moody group.

The affine analogue of a Lusztig variety was first studied in~\cite{lusztig_11_partitions} and~\cite{kv}.
The term \dfemph{affine Lusztig variety} was introduced by X.~He~\cite{he}, although other names have been used, such as \emph{generalized affine Springer fiber}, \emph{multiplicative affine Springer fiber}, and \emph{Kottwitz--Viehmann variety}.
Just as Lusztig varieties originally appear in the theory of character sheaves, so affine Lusztig varieties are expected to appear in a putative theory of character sheaves on loop groups.

Since affine Lusztig varieties for regular semisimple elements of the loop group $LG$ are still finite-dimensional, we can use them to define an analogue of $\tau_{\z, \sf{v}}$ geometrically.
It should take the form of a class function on an extended affine Weyl group.
However, it is less clear what should replace the functions $\spr_{\psi, G}$, since affine Springer fibers for unipotent elements of $LG$ are infinite-dimensional.
It seems plausible that one should use the ungraded characters of affine Springer representations arising from \emph{topologically} unipotent elements of $LG$.
Here, the analogues of the groups $A_u$ and their characters $\kappa$ play a major role in the arithmetic of the affine Springer fibers: more precisely, in the theory of endoscopy in the Langlands program~\cite{wang}.

\begin{quest}
Is there an analogue of~\Cref{conj:main}, relating the intersection cohomology of affine Lusztig varieties to ungraded affine Springer characters?
\end{quest}

\subsubsection{From Weyl Groups to Coxeter Groups}\label[subsubsection]{subsubsec:coxeter}

In~\cite{lusztig_93, lusztig_94}, Lusztig and Malle showed that certain postulates uniquely determine an analogue of the exotic Fourier transform from~\cite{lusztig_84} for any finite Coxeter group. 
(More accurately, Malle's appendix to~\cite{lusztig_94} handles type $H_4$.)
Consequently, $\tau_{\z, \sf{v}}$ can be generalized from Weyl groups to all finite groups.

At the same time, in~\cite{aa}, Achar--Aubert showed that certain axioms determine what they call \emph{preferred} analogues of the map $\psi \mapsto \spr_{\psi, G}$ for dihedral groups.
In~\cite{achar_rims}, Achar reports on attempts to extend the characters $\spr_{\psi, G}$ even further, to complex reflection groups.

\begin{quest} 
Can we uniformly generalize \Cref{conj:main} to finite Coxeter groups?
\end{quest} 

\subsection{Acknowledgments}

I thank Patrick Brosnan, Jaehyun Hong, Donggun Lee, and Eunjeong Lee for a discussion held shortly after I had sent them an early draft of this paper.
Independently, they too had observed \Cref{thm:rs}, and had used GAP3 and Sage to compute the Laurent polynomials $\alpha_{\psi, G}^\z(\sf{v})$ in low rank.
Moreover, in the draft I had sent, I had only claimed \Cref{thm:rs} for $g$ in a dense open locus of $G^\rs$.
I am grateful to them for pointing out that Abreu--Nigro's work allows us to extend that locus to $G^\rs$ itself.

I thank Roman Bezrukavnikov and Eric Sommers for discussions that helped me to arrive at \Cref{thm:ind} and \Cref{lem:inj}, respectively.

I thank Claude (Sonnet 4.6) for helping me to write Python scripts to analyze CHEVIE data.
I thank Rachel McEnroe and Martha Precup for their encouragement, and Nathan Williams for steering me toward more robust conjectures.

\section{Setup}\label[section]{sec:setup}

This section reviews the geometry needed in \Cref{sec:rs,sec:unipotent}.

Note that, while \Cref{sec:intro} discussed algebraic varieties over the complex numbers, Lusztig's papers on character sheaves start from a general algebraically closed field $\bb{k}$.
We do the same, and specialize $\bb{k}$ to the field of complex numbers or to the algebraic closure of a finite field only when needed.
We fix a prime $\ell$ invertible in $\bb{k}$, and work with $\QL$-complexes in the \'etale topology on $\bb{k}$-varieties.

Let $G$ be a connected reductive algebraic group over $\bb{k}$ with Weyl group $W$.
The characteristic of $\bb{k}$ is either zero or a good prime for $G$~\cite[28]{carter}.

\subsection{Unipotent Character Sheaves}

Let $\cal{B}$ be the flag variety of $G$.
Let 
\begin{align} 
\act \colon \cal{B} \times G \to \cal{B} \times \cal{B}
\end{align} 
be defined by $\act(B, g) = (B, g^{-1}Bg)$.
For all $\z \in W$, let $O_\z \subseteq \cal{B} \times \cal{B}$ be the $G$-orbit of pairs in relative position $\z$.
Sheaves on these orbits form a geometrization of the Hecke algebra $H_W$, while pullback-pushforward through the correspondence
\begin{align}
	\cal{B} \times \cal{B} \xleftarrow{\act} \cal{B} \times G \xrightarrow{\pr} G
\end{align}
is a geometrization of its cocenter map.

Our notations will mostly follow Lusztig's notations in~\cite{lusztig_85_3}.
Let $Y_\z \subseteq \cal{B} \times G$ be defined by the pullback square:
\begin{equation}
	\begin{tikzcd}
		O_\z \arrow{d} &Y_\z \arrow{l} \arrow{d}\\
		\cal{B} \times \cal{B} &\cal{B} \times G \arrow{l}[above]{\act}
	\end{tikzcd}
\end{equation}
%The map $Y_\z \to O_\z$ is smooth of relative dimension $\dim G - \dim \cal{B}$.
%Thus $Y_\z$ is smooth of pure dimension $\dim G + \ell(\z)$.
%
Let $\bar{Y}_\z$ be the Zariski closure of $Y_\z$ in $\cal{B} \times G$.
Thus $\bar{Y}_\z = \bigcup_{w \leq v} Y_w$, where $\leq$ is the Bruhat order on $W$ corresponding to the closure order on the orbits $O_w$.

Let $j \colon Y_\z \to \bar{Y}_\z$ be the inclusion map.
Let $\pi_\z \colon Y_\z \to G$ and $\bar{\pi}_\z \colon \bar{Y}_\z \to G$ be the projection maps that factor through $\pr$.

Let $j_{!\ast}$, denote intermediate extension of shifted perverse sheaves along $j$~\cite[\S{5.1}]{achar}.
Let $J_\z = j_{!\ast}\underline{\QL}_{Y_\z}$, the (unshifted) intersection cohomology complex of $\bar{Y}_\z$.
Let $\bar{K}_\z = \bar{\pi}_{\z, !}J_\z = \bar{\pi}_{\z, \ast} J_\z$.
By the decomposition theorem~\cite[\S{3.9}]{achar}, there is a noncanonical isomorphism
\begin{align}\label{eq:decomposition-thm}
	\bar{K}_\z \simeq \bigoplus_i {}^p\cal{H}^i(\bar{K}_\z)[-i],
\end{align}
where ${}^p\cal{H}^i$ denotes the $i$th cohomology sheaf with respect to the perverse $t$-structure, and each perverse sheaf ${}^p\cal{H}^i(\bar{K}_\z)$ is semisimple.
The simple summands of these perverse sheaves, as we run over $\z$ and $i$, are called \dfemph{unipotent character sheaves}.

\subsection{The Grothendieck Alteration}\label[subsection]{subsec:grothendieck}

The map $\pi_e = \bar{\pi}_e$ is called the \dfemph{Grothendieck alteration} of $G$.
Below, we review facts about $\pi_e$ that can be proved in an analogous way to their counterparts for the Lie algebra of $G$~\cite[\S{8.2}]{achar}.

First, the pullback of $\pi_e$ to the regular semisimple locus $G^\rs$ is a Galois \'etale cover with Galois group $W$.
In particular, if $\bb{k} = \bb{C}$, then $W$ is a quotient of the topological fundamental group of $G^\rs$.

Next, $\pi_e$ is a small map.
So $\bar{K}_e$ is the intermediate extension of its restriction $\bar{K}_e|_{G^\rs}$, and the Galois action of $W$ on $\bar{K}_e|_{G^\rs}$ induces one on $\bar{K}_e$ by functoriality.
We obtain a $W$-equivariant decomposition
\begin{align}\label{eq:grothendieck}
\bar{K}_e[\dim G] \simeq \bigoplus_{\chi \in \Irr(W)} V_\chi \otimes \cal{A}_\chi,
\end{align}
where the $\cal{A}_\chi$ are simple perverse sheaves and $V_\chi \vcentcolon= \Hom(\cal{A}_\chi, \bar{K}_e[\dim G])$ affords a representation of $W$ with character $\chi$.

A unipotent character sheaf that is not isomorphic to $\cal{A}_\chi$ for any $\chi$ is called \dfemph{cuspidal}~\cite{lusztig_85_2}.

\begin{thm}[Lusztig]\label[thm]{thm:exotic}
Suppose that $\bb{k}$ is either of characteristic zero or the algebraic closure of a finite field.
Then for any $\z \in W$ and $\chi \in \Irr(W)$, we have
\begin{align}
	\sum_i {(-\sf{v})^i} (\cal{A}_\chi : {}^p\cal{H}^i(\bar{K}_\z))
	= (-1)^{\dim G} \sf{v}^{\dim G + \ell(\z)} \sum_{\psi \in \Irr(W)}
	{\{\chi, \psi\}}\psi_\sf{v}(c_\z),
\end{align}
where $\ell$ is Bruhat length, as in \S\ref{subsec:lusztig}, and $\{-, -\}$ is the exotic Fourier transform, as in \Cref{sec:dl}.
\end{thm}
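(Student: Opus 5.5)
The plan is to reduce to the case where $\bb{k}$ is the algebraic closure of a finite field $\bb{F}_q$, and then work with characteristic functions of Frobenius-stable complexes on $G(\bb{F}_{q^m})$. We then compare two expansions of the characteristic function of $\bar{K}_\z$: one computed directly from the geometry of $\bar{Y}_\z$, the other through the decomposition~\eqref{eq:decomposition-thm}. The characteristic-zero case follows by spreading out. The varieties $\bar{Y}_\z$, the maps $\bar\pi_\z$, the complexes $J_\z$, and the sheaves $\cal{A}_\chi$ are all defined over a finitely generated $\bb{Z}[1/\ell]$-algebra. After shrinking the base, generic base change and constructibility ensure that the perverse cohomology sheaves ${}^p\cal{H}^i(\bar{K}_\z)$ and their simple constituents specialize compatibly. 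So the multiplicities $(\cal{A}_\chi : {}^p\cal{H}^i(\bar{K}_\z))$ agree with those over $\overline{\bb{F}}_p$ for almost all $p$. We may take such $p$ good for $G$.

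Over $\overline{\bb{F}}_q$, with $G$ split over $\bb{F}_q$, the first step is purity. The map $\bar\pi_\z$ is proper, and $J_\z$ is pure of weight $0$ with its natural Weil structure, so $\bar{K}_\z$ is pure. Hence Frobenius acts on each multiplicity space $\Hom(\cal{A}_\chi, {}^p\cal{H}^i(\bar{K}_\z))$ with all eigenvalues of absolute value $q^{i/2}$, once each $\cal{A}_\chi$ is given its natural normalized Weil structure, inherited from $\bar{K}_e$ via~\eqref{eq:grothendieck}. For the non-cuspidal sheaves $\cal{A}_\chi$, I expect Lusztig's results to give more: the eigenvalues are exactly $q^{i/2}$ up to a uniform sign. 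Then the characteristic function of $\bar{K}_\z$ over $\bb{F}_{q^m}$ has the form
\begin{align}
\sum_\chi \Big(\sum_i (-1)^i q^{mi/2} (\cal{A}_\chi : {}^p\cal{H}^i(\bar{K}_\z))\Big)\chi_{\cal{A}_\chi} + (\text{cuspidal terms}).
\end{align}

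The second step computes the same function geometrically. Stratify $\bar{Y}_\z = \bigcup_{y \le \z} Y_y$. Along $Y_y$, the stalks of $J_\z$ are governed by Kazhdan--Lusztig polynomials $P_{y,\z}$, since $\bar{Y}_\z \to \bar{O}_\z$ is smooth. The Grothendieck--Lefschetz trace formula then writes the characteristic function of $\bar{K}_\z$ as a combination of the functions $g \mapsto \#\{B : B \xrightarrow{y} F(g^{-1}Bg)\}$. By Deligne--Lusztig, these functions equal $\sum_\psi \psi_{q}(\delta_y)R_\psi$, where $R_\psi$ is the unipotent almost character attached to $\psi$. Summing with the Kazhdan--Lusztig coefficients gives a characteristic function of $\bar{K}_\z$ equal to $\sum_\psi \psi_{\sf{v}}(c_\z)R_\psi$, up to the normalizing power $\sf{v}^{\ell(\z)}$, with $\sf{v}=q^{m/2}$.

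The third step, which I expect to be the main obstacle, identifies the characteristic functions $\chi_{\cal{A}_\chi}$ in terms of unipotent characters, and matches them against the $R_\psi$ through the exotic Fourier transform. This is Lusztig's theorem that characteristic functions of character sheaves coincide, up to scalars, with almost characters. It needs the cleanness of cuspidal character sheaves and good characteristic, and I would cite it from~\cite{lusztig_85_3} and its sequels rather than reprove it. Given it, the cuspidal characteristic functions are orthogonal to the principal-series part. Taking inner products with $\chi_{\cal{A}_\chi}$ then produces exactly the matrix $\{\chi,\psi\}$, together with the signs $(-1)^{\dim G}$ and the shift $\sf{v}^{\dim G}$ coming from~\eqref{eq:grothendieck}. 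Finally, both sides are polynomial expressions in $q^{m/2}$ valid for all $m \ge 1$. So the graded identity of Laurent polynomials follows by comparing coefficients in $\sf{v}$, which the purity of step one makes legitimate degree by degree.
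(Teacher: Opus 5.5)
There is a genuine error in your Step 2, and it sits exactly where the exotic Fourier transform is supposed to enter.

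For $g \in G^F$, the $F$-fixed points of $Y_{y,g}$ are the rational Borels $\{B \in \mathcal{B}^F : B \xrightarrow{y} g^{-1}Bg\}$. There is no $F$ inside the relative-position condition; as you wrote it, the set is a positive-dimensional Deligne--Lusztig-type variety, not a finite set. The number of these rational Borels is the bitrace of $(g,\delta_y)$ on $\overline{\mathbb{Q}}_\ell[\mathcal{B}^F]$. So by Iwahori's theorem and \eqref{eq:principal-series} it equals $\sum_\psi \psi_{\mathsf{v}}(\delta_y)\rho_\psi(g)$ at $\mathsf{v}=q^{1/2}$. This is an expansion in the unipotent principal series characters $\rho_\psi$, not in the almost characters $R_\psi$. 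Hence the characteristic function of $\bar K_z$ is $q^{\ell(z)/2}\sum_\psi \psi_{q^{1/2}}(c_z)\rho_\psi$.

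With your version, Step 3 cannot produce $\{\chi,\psi\}$. The theorem you cite gives $\chi_{\mathcal{A}_\chi} \propto R_\chi$ for the principal series sheaves, and almost characters are orthonormal. Pairing would therefore give multiplicities $\chi_{\mathsf{v}}(c_z)$ with no Fourier transform at all. That is false outside type $A$. In type $B_2$ with $z=s_1s_2$, the values $\chi_{\mathsf{v}}(c_{s_1}c_{s_2})$ on the family $\{11.,\,1.1,\,.2\}$ are $0,2,0$, whereas $\sum_\psi\{\chi,\psi\}\psi_{\mathsf{v}}(c_z)=1$ for all three. The correct mechanism is that the Fourier matrix appears exactly once, as the change of basis $(\rho_\psi, R_\chi)_{G^F}=\{\psi,\chi\}=\{\chi,\psi\}$ between the two expansions. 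This is literally the definition in the appendix plus symmetry of the Fourier matrix.

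With that correction your argument goes through, and it is a genuinely different route from the paper's. Your spreading-out step matches the paper. Over $\overline{\mathbb{F}}_q$, however, the paper simply quotes Lusztig (\cite{lusztig_85_3}, Cor.~14.11, together with \cite{lusztig_86_5}, Thm.~23.1(c)). Your route makes it transparent why $\{-,-\}$ appears, but its inputs are heavier than you indicate:
\begin{itemize}
\item You need the exact normalization $\chi_{\mathcal{A}_\chi}=\pm q^{c}R_\chi$ for the Weil structure inherited from \eqref{eq:grothendieck}. This comes from identifying the $w$-twisted characteristic function of $\bar K_e$ with $R_w$.
\item You need $R_\chi$ to be orthogonal to the characteristic functions of the non-principal unipotent character sheaves occurring in ${}^p\mathcal{H}^i(\bar K_z)$. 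This rests on cleanness and on the Lusztig--Shoji identification of characteristic functions with almost characters. Spreading out lets you take $p$ and $q$ large enough for these.
\end{itemize}
You also do not need to expect Frobenius eigenvalues to be exactly $q^{i/2}$. Purity separates cohomological degrees by absolute value, and the traces form a Laurent polynomial in $q^{m/2}$ for every $m$. Independence of the functions $m\mapsto\lambda^m$ then forces the eigenvalues.
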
 

\begin{proof}
When $G$ is defined over a finite field and $\bb{k}$ is its algebraic closure, this follows from Lusztig's~\cite[Corollary~14.11]{lusztig_85_3} and~\cite[Theorem~23.1(c)]{lusztig_86_5}.
See also the explanation of~\cite[413]{ww}.

When $\bb{k}$ is of characteristic zero, we deduce the result by a standard ``spreading out'' argument, following~\cite[\S{2.7}]{springer_78} and the references to SGA 4 there.
In brief, we spread out $G$ and the relevant $\QL$-complexes to a model over a finitely generated ring $R \subseteq \bb{k}$ in which $\ell$ is invertible, using the specialization theorem for \'etale cohomology.
After shrinking $R$, we may assume that the complexes are locally constant over $\ur{Spec}(R)$.
At some closed point $\fr{p} \in \ur{Spec}(R)$ and geometric point $\bar{\fr{p}}$ over $\fr{p}$, the multiplicity formula holds at $\bar{\fr{p}}$.
Hence, by the comparison theorem for \'etale cohomology, the same formula holds at the geometric point of $\ur{Spec}(R)$ corresponding to the embedding of $R$ into $\bb{k}$.
\end{proof}

\subsection{The Springer Resolution}\label[subsection]{subsec:setup-springer}

Let $G^\unip$ be the variety of unipotent elements of $G$.
Recall that $\dim G^\unip = 2N$, where $N$ is the number of positive roots of $G$.
Let $\pi_\z^\unip \colon Y_\z^\unip \to G^\unip$ and $\bar{\pi}_\z^\unip \colon \bar{Y}_\z^\unip \to G^\unip$ be defined by the pullback squares:
\begin{equation}
\begin{tikzcd}
Y_\z^\unip \arrow{r} \arrow{d}[left]{\pi_\z^\unip} &Y_\z \arrow{d}{\pi_\z}\\
G^\unip \arrow{r} &G
\end{tikzcd}
\qquad 
\begin{tikzcd}
\bar{Y}_\z^\unip \arrow{r} \arrow{d}[left]{\bar{\pi}_\z^\unip} &\bar{Y}_\z \arrow{d}{\bar{\pi}_\z}\\
G^\unip \arrow{r} &G
\end{tikzcd}
\end{equation}
As mentioned in \S\ref{subsec:lusztig}, $\pi_e^\unip = \bar{\pi}_e^\unip$ is called the \dfemph{Springer resolution} of $G^\unip$.

The \dfemph{Springer sheaf} is $\cal{S} \vcentcolon= \pi_{e, !}^\unip\underline{\QL}_{Y_e^\unip}[2N] = \pi_{e, \ast}^\unip\underline{\QL}_{Y_e^\unip}[2N]$.
Since $Y_e$ is smooth, being smooth over $\cal{B}$, we have $\cal{S}[-2N] = \pi_{e, !}\underline{\QL}_{Y_e}|_{G^\unip} = \bar{K}_e|_{G^\unip}$.
Thus \eqref{eq:grothendieck} restricts to a decomposition
\begin{align}\label{eq:springer}
\cal{S} \simeq \bigoplus_{\chi \in \Irr(W)} V_\chi \otimes \cal{S}_\chi,
\end{align}
where $\cal{S}_\chi = \cal{A}_\chi[2N - \dim G]|_{G^\unip}$.
In particular, the $W$-action on $\bar{K}_e$ restricts to an action on $\cal{S}$, for which we have $W$-equivariant isomorphisms $V_\chi \simeq \Hom(\cal{S}_\chi, \cal{S})$.

In \Cref{sec:unipotent}, we actually need $G$-equivariant versions of the Springer sheaf and its summands.
Let $G$ act on itself by (right) conjugation.
Then $Y_\z$, $\bar{Y}_\z$ are stable under the $G$-action on $\cal{B} \times G$.
Since $G^\unip$ is stable under conjugation, $Y_\z^\unip$, $\bar{Y}_\z^\unip$ are also stable under the $G$-action on $\cal{B} \times G$.
So the maps 
%$\pi_\z$, $\bar{\pi}_\z$ and their restrictions 
$\pi_\z^\unip$, $\bar{\pi}_\z^\unip$ are equivariant.
In particular, $\cal{S}$ and its summands $\cal{S}_\chi$ can be upgraded to objects of the $G$-equivariant derived category of $\QL$-complexes on $G^\unip$~\cite[\S{6.4}]{achar}.
We denote the equivariant version of $\cal{S}$ by $\cal{S}_G$.

In any $G$-equivariant derived category of $\QL$-complexes, we write $\Hom^n(K, K') = \Hom(K, K'[n])$.
The following is an analogue of~\cite[Corollary~6.4]{lusztig_88}, with $G^\unip$ in place of the nilpotent cone.

\begin{thm}[Lusztig]\label[thm]{thm:end-spr}
\begin{enumerate}
\item 	$\Hom^i(\cal{S}_G, \cal{S}_G)$ vanishes in odd degrees $i$.

\item 	There is an isomorphism of graded $\QL$-algebras
\begin{align}\label{eq:end-spr}
\Hom^{2\ast}(\cal{S}_G, \cal{S}_G)
	\simeq \Sym^\ast(\QL \otimes \mathbb{X}) \rtimes \QL W,
\end{align} 
where $\mathbb{X}$ is the character lattice in the root datum of $G$, as in \S\ref{subsec:lusztig}, and $\QL W$ is placed in degree $0$.
The degree-$0$ isomorphism $\Hom^0(\cal{S}_G, \cal{S}_G) \simeq \QL W$ recovers the $W$-action on $\cal{S}_G$ arising from \eqref{eq:springer}.

\end{enumerate}
\end{thm}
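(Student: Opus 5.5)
The plan is to reduce to Lusztig's computation of the endomorphism algebra of the Springer sheaf on the nilpotent cone~\cite[Corollary~6.4]{lusztig_88}. We transport it to $G^\unip$ via a $G$-equivariant isomorphism between $G^\unip$ and the nilpotent cone $\cal{N}$ of the Lie algebra $\fr{g}$. Since the characteristic of $\bb{k}$ is zero or good for $G$, a Springer isomorphism $\sigma \colon G^\unip \xrightarrow{\sim} \cal{N}$ exists; it is $G$-equivariant for the conjugation actions. It lifts to a $G$-equivariant isomorphism $Y_e^\unip \simeq \tilde{\cal{N}} = \{(B, x) \mid x \in \ur{Lie}(\ur{R}_u B)\}$, compatible with the projections to $\cal{B}$. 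Concretely, $u \in B$ if and only if $\sigma(u) \in \ur{Lie}(\ur{R}_u B)$, because $\sigma$ restricts to an isomorphism from the unipotent radical of each Borel onto its Lie algebra. Hence $\sigma_\ast \cal{S}_G$ is the $G$-equivariant Springer sheaf on $\cal{N}$, and the equivariant Ext algebras agree.

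Next I would recall the structure of Lusztig's argument, to check that it goes through in the form needed. Write $\tilde\pi \colon \tilde{\cal{N}} \to \cal{N}$. By adjunction and base change,
\begin{align}
\Hom^\ast(\cal{S}_G, \cal{S}_G) \simeq \ur{H}^{\ur{BM}, G}_{4N - \ast}(\cal{Z}),
\end{align}
where $\cal{Z} = \tilde{\cal{N}} \times_{\cal{N}} \tilde{\cal{N}}$ is the Steinberg variety. Stratify $\cal{Z}$ by relative position $w \in W$ of the two Borels. Each stratum $\cal{Z}_w$ is a vector bundle over the orbit $O_w$ of rank $N - \ell(w)$, and $O_w$ is $G$-equivariantly fibered over $G/T$-type data with $\ur{H}_G^\ast(O_w) \simeq \ur{H}_T^\ast(\mathrm{pt}) \simeq \Sym^\ast(\QL \otimes \mathbb{X})$ in even degrees. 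Each stratum then has equivariant Borel--Moore homology free over $\Sym^\ast$ and concentrated in even degrees. The long exact sequences for the filtration by closed unions of strata therefore split. This proves (1) and shows that $\ur{H}^{\ur{BM}, G}_\ast(\cal{Z})$ is a free $\Sym^\ast$-module of rank $|W|$.

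For the algebra structure in (2), I would use the two natural pieces. The first is the subalgebra $\Sym^\ast(\mathbb{X}) \simeq \ur{H}^\ast_G(\cal{B})$, acting through the diagonal stratum: each $\lambda \in \mathbb{X}$ gives the Chern class of the line bundle $\cal{L}_\lambda$ pulled back to $\tilde{\cal{N}}$, viewed as an endomorphism of degree $2$. The second is the degree-$0$ part $\QL W$. It is identified with the Galois action through \eqref{eq:springer}: since $\pi_e$ is small, $\Hom^0(\bar{K}_e, \bar{K}_e) = \QL W$, and this restricts to $\Hom^0(\cal{S}_G, \cal{S}_G)$. The restriction is an isomorphism because $\Hom^0$ has dimension $|W|$ by the stratification count, and it is injective because each $\cal{S}_\chi$ is nonzero by the Springer correspondence. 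The commutation relations of the semidirect product, $w \cdot \lambda = w(\lambda) \cdot w$, follow from the corresponding relation over $G^\rs$ (or $\fr{g}^\rs$), where the Grothendieck alteration is a $W$-torsor. There the Chern classes of $\cal{L}_\lambda$ are permuted by the deck transformations. Specialization then transfers the relation to $\cal{N}$, and freeness gives that the induced map $\Sym^\ast \rtimes \QL W \to \Hom^{2\ast}$ is an isomorphism.

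The main obstacle is this last step: checking compatibility of the convolution product with the Galois $W$-action, that is, that the twisted commutation relation holds on the nilpotent part and not merely generically. Here I would simply invoke Lusztig's treatment in~\cite{lusztig_88} (or the equivariant version in~\cite[\S{8}]{achar}) after the Springer-isomorphism transfer, rather than reprove it.
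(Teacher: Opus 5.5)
Your proposal takes essentially the paper's route. The paper gives no separate proof: it records the statement as the $G^\unip$-analogue of \cite[Corollary~6.4]{lusztig_88}, and your transfer along a Springer isomorphism $G^\unip \simeq \cal{N}$, lifted to $Y_e^\unip \simeq \tilde{\cal{N}}$ and combined with the stratification of the Steinberg variety, is a sound way to make that ``analogue'' precise.

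One step is left implicit, in the paper as well as in your sketch. You pin down degree $0$ using the Galois $W$-action from the Grothendieck alteration of $G$, but the semidirect-product relations you import come from Lusztig's theorem for $\tilde{\fr{g}} \to \fr{g}$, so you must check that the two $W$-actions agree under the transfer. Over $\bb{C}$ this holds, e.g., because the exponential map identifies the two alterations over $G$-stable neighborhoods of $\cal{N}$ and $G^\unip$.
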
 

%Let $\bb{H}_W$ denote the graded algebra above.
As in~\cite{trinh}, we may identify $\Hom^\ast(\cal{S}_G, \cal{S}_G)$ with the convolution algebra formed by the equivariant Borel--Moore homology of the Steinberg variety from \eqref{eq:steinberg}.

\section{The Regular Semisimple Locus}\label[section]{sec:rs}

The goal of this section is to prove \Cref{thm:rs}.
We keep the hypotheses and notation of \Cref{sec:setup}, but now, take $\bb{k} = \bb{C}$.

Recall that $\ur{IH}^\ast$ denotes \'etale intersection cohomology with $\QL$-coefficients, and that $\bar{Y}_{\z, g}$ is the fiber of $\bar{Y}_\z$ above $g \in G(\bb{C})$.
For any $\QL$-complex $K$ on $G$ and $g \in G(\bb{C})$, we write $\cal{H}_g^\ast(K)$ for the stalk at $g$ of the cohomology sheaf $\cal{H}^\ast(K)$.

\begin{lem}\label[lem]{lem:ih}
For any $\z \in W$ and $g \in G^\rs(\bb{C})$, we have a $W$-equivariant isomorphism
\begin{align} 
	\ur{IH}^\ast(\bar{Y}_{\z, g}) \simeq \cal{H}_g^\ast(\bar{K}_\z).
\end{align} 
\end{lem}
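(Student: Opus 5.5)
The isomorphism will come from proper base change together with the compatibility of intersection cohomology complexes with restriction to a smooth base. Since $\bar{\pi}_\z$ is proper, $\bar{K}_\z = \bar{\pi}_{\z,!}J_\z$, and proper base change gives
\begin{align}
\cal{H}_g^\ast(\bar{K}_\z) \simeq \ur{H}^\ast(\bar{Y}_{\z,g}, J_\z|_{\bar{Y}_{\z,g}}).
\end{align}
So the main task is to identify $J_\z|_{\bar{Y}_{\z,g}}$ with the intersection cohomology complex of the fiber $\bar{Y}_{\z,g}$ when $g \in G^\rs(\bb{C})$.

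For this I would use the structure of $\bar{\pi}_\z$ over $G^\rs$. Fix a maximal torus $T$ and write $G^\rs$ as $G \times^{N(T)} T^\rs$, or work with the Galois cover $\tilde{G}^\rs = G/T \times T^\rs \to G^\rs$. After pulling back along this cover, $\bar{Y}_\z$ becomes locally trivial over $\tilde{G}^\rs$. Conjugation by $G$ moves the $G/T$ factor, and the fiber $\bar{Y}_{\z,t}$ for $t \in T^\rs$ is the closure in $\cal{B}$ of $\{B : B \xrightarrow{\z} t^{-1}Bt\}$. This closure is preserved by the $T$-action, and one has to check that it does not depend on $t$ in a way that breaks local triviality.

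Concretely, I expect to invoke the result of Abreu--Nigro quoted in \S\ref{subsec:lusztig}. Their work shows that $\bar{\pi}_\z$ restricted to $G^\rs$ is topologically (even stratified) locally trivial, and that $\bar{Y}_\z|_{G^\rs} \to G^\rs$ is smooth-locally a product. Granting that, for a smooth map $f$ of relative dimension $d$ one has $f^\ast \mathrm{IC} = \mathrm{IC}$ up to shift. Locally, $\bar{Y}_\z|_{U} \simeq \bar{Y}_{\z,g} \times U$ for a small neighborhood $U$ of $g$, so $J_\z|_U \simeq J_{\bar{Y}_{\z,g}} \boxtimes \underline{\QL}_U$. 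Restricting to $\{g\}$ then gives $J_\z|_{\bar{Y}_{\z,g}} \simeq J_{\bar{Y}_{\z,g}}$. I also need that $Y_{\z,g}$ is dense in $\bar{Y}_{\z,g}$, which follows from the same local product structure. Combining this with base change yields the isomorphism $\ur{IH}^\ast(\bar{Y}_{\z,g}) \simeq \cal{H}_g^\ast(\bar{K}_\z)$.

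For $W$-equivariance, $\cal{H}^\ast(\bar{K}_\z)|_{G^\rs}$ consists of local systems. The isomorphisms above are natural in $g$ along paths, because they are produced by the local product structure and base change. The monodromy action of $\pi_1(G^\rs,g)$ on both sides therefore agrees. On the left it is the action used in \Cref{thm:rs}; on the right it is the monodromy of the stalks. The action factors through $W$ via the Grothendieck alteration cover of \S\ref{subsec:grothendieck}.

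The main obstacle is the local triviality of $\bar{Y}_\z$ over $G^\rs$, that is, that the fiberwise closure equals the fiber of the closure and that the IC complex restricts correctly. This is where Abreu--Nigro's input is essential. If only generic local triviality were available, the argument would hold on a dense open subset, as in the earlier draft mentioned in the acknowledgments. In that case I would extend to all of $G^\rs$ using the $G$-equivariance plus $T^\rs$-translation invariance of the fibers.
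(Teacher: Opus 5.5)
Your argument is correct, but it is organized differently from the paper's.

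\textbf{The paper's route.} It rewrites the two sides as cohomology of $\bar{Y}_{\z,g}$ with coefficients in $j_{g,!\ast}i_g^\ast\underline{\QL}_{Y_\z}$ and $i_g^\ast j_{!\ast}\underline{\QL}_{Y_\z}$. It obtains the comparison $i_g^\ast j_{!\ast}\simeq j_{g,!\ast}i_g^\ast$ only over some Zariski dense open subset of $G^\rs$, using Deligne's generic base change theorem. It then invokes Abreu--Nigro's Proposition~8.2 ($\bar{Y}_\z(\bb{C})\to G(\bb{C})$ is a topological fibration over $G^\rs(\bb{C})$) only to spread this to every regular semisimple $g$.

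\textbf{Your route.} You bypass generic base change. Proper base change together with the analytic local product structure $\bar{Y}_\z|_U\simeq\bar{Y}_{\z,g}\times U$ gives $J_\z|_U\simeq\mathrm{pr}_1^\ast\,\mathrm{IC}(\bar{Y}_{\z,g})$ directly at every point. The same trivializations also give density of $Y_{\z,g}$ in the fibre and make the compatibility with monodromy transparent.

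\textbf{What each costs.} You use the fibration in a slightly stronger form: the trivializations must carry $Y_\z|_U$ onto $Y_{\z,g}\times U$. If the cited result only provides an unstratified fibration, you have two options. You can upgrade it to a stratified one, e.g.\ via Thom's first isotopy lemma for the stratification by the $Y_w$, $w\leq\z$. Or you can appeal to Goresky--MacPherson's topological invariance of intersection cohomology complexes and argue density separately. The paper's propagation step implicitly relies on the same compatibility of $J_\z$ with the trivializations, so your version makes explicit what is used there. You should also state that you pass between \'etale and analytic complexes via the comparison theorem, which commutes with $j_{!\ast}$, restriction and proper pushforward.

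\textbf{Two side remarks.} First, only local triviality in the analytic topology is available or needed, not an algebraic (``smooth-local'') product structure. Second, the fallback in your last paragraph would not work as stated. The subvarieties $Y_{\z,t}\subseteq\cal{B}$ genuinely move with $t\in T^\rs$: already for $\mathrm{GL}_3$ and $\ell(\z)=2$, the plane in the flag is forced to be $V_1+t^{\pm1}V_1$. So there is no translation invariance of fibres to exploit, and your proof should rest on the Abreu--Nigro input alone.
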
 

\begin{proof}
Let $i_g \colon \bar{Y}_{\z, g} \to \bar{Y}_\z$ be the inclusion map.
%, and let $i_g^\ast$ be ordinary (derived) pullback along $i_g$.
Let $j_g \colon Y_{\z, g} \to \bar{Y}_{\z, g}$ be the inclusion map, parallel to our earlier notation $j \colon Y_\z \to \bar{Y}_\z$.
%, and let $j_{g, !\ast}$, denote intermediate extension along $j_g$.
Then 
\begin{align}
\ur{IH}^\ast(\bar{Y}_{\z, g}) &= \ur{H}^\ast(\bar{Y}_{\z, g}, j_{g, !\ast} i_g^\ast \underline{\bb{C}}_{Y_\z}),\\
\cal{H}_g^\ast(\bar{K}_\z) &= \ur{H}^\ast(\bar{Y}_{\z, g}, i_g^\ast j_{!\ast} \underline{\bb{C}}_{Y_\z}). 
\end{align}
The usual base change theorem gives a $\pi_1(G^\rs, g)$-equivariant isomorphism of (derived) functors $i_g^\ast j_! \xrightarrow{\sim} j_{g, !} i_g^\ast$.
So it suffices to show that the composition $i_g^\ast j_! \xrightarrow{\sim} j_{g, !} i_g^\ast \to j_{g, !\ast} i_g^\ast$ factors through an isomorphism $i_g^\ast j_{!\ast} \xrightarrow{\sim} j_{g, !\ast} i_g^\ast$.

This holds on some Zariski dense open subset of $G^\rs$ by the ``generic base change'' theorem of~\cite[Th\'eor\`eme~1.9]{deligne}.\footnote{See also~\url{https://mathoverflow.net/a/123787}.}
But by~\cite[Proposition~8.2]{an_25}, $\bar{Y}_\z(\bb{C}) \to G(\bb{C})$ restricts to a topological fibration over $G^\rs(\bb{C})$.\footnote{I thank Patrick Brosnan, Jaehyun Hong, Donggun Lee, and Eunjeong Lee for pointing out this reference.}
Therefore, the desired isomorphism holds for all $g \in G^\rs(\bb{C})$.
\end{proof}

The preceding lemma reduces the study of $\ur{IH}^\ast(\bar{Y}_{\z, g})$ to the study of $\cal{H}_g^\ast(\cal{A})$ for unipotent character sheaves $\cal{A}$.

\begin{lem}\label[lem]{lem:monodromy}
For any $\chi \in \Irr(W)$ and $g \in G^\rs(\bb{C})$, the graded vector space $\cal{H}_g^\ast(\cal{A}_\chi)$ is concentrated in degree $-{\dim G}$.
The monodromy action of $\pi_1(G^\rs, g)$ on it defines a representation of $W$ with character $\chi$. 
\end{lem}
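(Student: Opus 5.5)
The plan is to restrict the decomposition \eqref{eq:grothendieck} to the regular semisimple locus and read off each isotypic piece. First I use the facts recalled in \S\ref{subsec:grothendieck}: the restriction $\pi_e^\rs \colon Y_e^\rs \to G^\rs$ is a Galois étale cover with group $W$. Hence $\bar{K}_e|_{G^\rs} = \pi^\rs_{e,\ast}\underline{\QL}$ is a local system in degree $0$. Over $\bb{C}$, its stalk at $g$ is $\QL[\pi_e^{-1}(g)]$, and $\pi_e^{-1}(g)$ is a $W$-torsor. As a $\pi_1(G^\rs, g)$-module, this stalk is the regular representation of $W$ pulled back along the surjection $\pi_1(G^\rs,g) \to W$.

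Second, I decompose this local system isotypically. Write $\cal{L} = \bar{K}_e|_{G^\rs}$. The Galois action of $W$ on $\cal{L}$ commutes with monodromy. On the stalk, the monodromy is left multiplication on $\QL W$ (through the quotient), and the Galois action is right multiplication. So the bimodule decomposition $\QL W \simeq \bigoplus_\chi V_\chi \otimes V_\chi^\ast$, or $V_\chi \otimes V_\chi$ since characters of Weyl groups are real, gives
\begin{align}
\cal{L} \simeq \bigoplus_{\chi} V_\chi \otimes \cal{L}_\chi .
\end{align}
Here $\cal{L}_\chi$ is the irreducible local system on $G^\rs$ whose monodromy factors through $W$ with character $\chi$ (up to dual, which is harmless).

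Third, I match this with \eqref{eq:grothendieck} using smallness. Since $\pi_e$ is small, $\bar{K}_e[\dim G]$ is the intermediate extension of $\cal{L}[\dim G]$ from the dense open $G^\rs$, compatibly with the $W$-action. Intermediate extension is fully faithful on semisimple objects and commutes with direct sums. So $\cal{A}_\chi \simeq j^{\rs}_{!\ast}(\cal{L}_\chi[\dim G])$, and therefore $\cal{A}_\chi|_{G^\rs} \simeq \cal{L}_\chi[\dim G]$. This identification respects the multiplicity spaces $V_\chi = \Hom(\cal{A}_\chi, \bar{K}_e[\dim G])$, because restriction to $G^\rs$ induces an isomorphism on these $\Hom$ spaces. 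It follows that $\cal{H}^\ast_g(\cal{A}_\chi)$ is the stalk of $\cal{L}_\chi$ placed in degree $-\dim G$, and its monodromy affords $\chi$.

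The main obstacle is the bookkeeping between the two commuting $W$-actions. One must make sure that the $\chi$-isotypic part for the Galois action, which defines $\cal{A}_\chi$ via $V_\chi$, has monodromy with character $\chi$ rather than a twist such as $\chi \otimes \varepsilon$ or an outer twist. I would check this directly on the bimodule $\QL W$: the left and right actions have matched isotypic components up to the dual, and reality of Weyl group characters removes the dual. I would also confirm that the $W$-action on $\bar{K}_e$ defined by functoriality restricts on $G^\rs$ to the deck-transformation action, with no sign normalization.
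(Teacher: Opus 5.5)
Your proposal is correct and follows the paper's argument: the paper also uses that $Y_e^\rs \to G^\rs$ is a $W$-torsor to place the stalk of $\bar{K}_e$ at $g$ in degree $0$ as the regular representation, compares \eqref{eq:grothendieck} with the bi-equivariant decomposition $\bb{C}W \simeq \bigoplus_\chi V_\chi \otimes V_{\chi^\vee}$, and removes the dual using that Weyl group characters are real. The only difference is that the paper makes this comparison directly on stalks, reading $V_\chi \otimes \cal{H}_g^\ast(\cal{A}_\chi)$ as the $\chi$-isotypic part for the Galois action, so your step using full faithfulness of $j_{!\ast}$ to identify $\cal{A}_\chi|_{G^\rs}$ is valid but not needed; your sign-normalization worry is also moot, because the $W$-action on $\bar{K}_e$ is defined as the functorial extension of the Galois action on $\bar{K}_e|_{G^\rs}$.
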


\begin{proof}
Let $Y_e^\rs \to G^\rs$ be the pullback of $Y_e \to G$.
Since $Y_e^\rs \to G^\rs$ is a $W$-torsor, and $W$ is finite, we know that $\cal{H}_g^\ast(\bar{K}_e)$ is concentrated in degree $0$.
Since $\cal{A}_\chi$ is a summand of $\bar{K}_e[\dim G]$, we deduce the first claim.

By construction, the monodromy action on the $W$-torsor $\cal{H}_g^0(\bar{K}_e)$ factors through the regular representation of $W$.
The resulting $W$-action commutes with the one arising from the Galois action on $\bar{K}_e$.
So, from comparing \eqref{eq:grothendieck} to the bi-equivariant decomposition $\bb{C} W \simeq \bigoplus_\chi V_\chi \otimes V_{\chi^\vee}$, we deduce that the monodromy action on $\cal{H}_g^\ast(\cal{A}_\chi)$ has character $\chi^\vee$, where $(-)^\vee$ denotes duals.
But irreducible characters of Weyl groups are real-valued, so they are self-dual.
\end{proof} 

\begin{lem}[Lusztig]\label[lem]{lem:vanish}
If $\cal{A}$ is a cuspidal unipotent character sheaf on $G$, then $\cal{H}_g^\ast(\cal{A})$ vanishes for all $g \in G^\rs(\bb{C})$.
\end{lem}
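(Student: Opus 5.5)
Since the statement is about stalks on the regular semisimple locus, the plan is to use Lusztig's description of cuspidal character sheaves by their support. Recall from Lusztig's theory (Character Sheaves I–V) that every character sheaf $\cal{A}$ on $G$ is, up to shift, the intermediate extension of an irreducible $G$-equivariant local system on a locally closed "stratum." Such a stratum has the form $Y_{(L,\Sigma)}$: the union of the $G$-conjugates of $\Sigma_{\mathrm{reg}}$, where $L$ is a Levi subgroup of a parabolic and $\Sigma = Z(L)^\circ \cdot C$ for an isolated class $C$ of $L$. By the definition of cuspidality (the parabolic restrictions to proper Levis vanish), Lusztig shows that for a cuspidal $\cal{A}$ we have $L = G$. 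Consequently the support of $\cal{A}$ is $\bar\Sigma$ with $\Sigma = Z(G)^\circ \cdot C$, where $C$ is a single conjugacy class of $G$ whose semisimple part is isolated. In particular, every element of the support has the form $z\,c$ with $z$ central and $c \in C$.

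The main step is a dimension count. I will show that $\bar\Sigma$ does not meet $G^\rs$ unless $G$ is a torus. Since $\bar\Sigma$ is stable under multiplication by $Z(G)^\circ$, it suffices to bound $\dim \bar\Sigma$. We have $\dim \bar\Sigma \le \dim Z(G)^\circ + \dim C$, and $\dim C = \dim G - \dim Z_G(c) \le \dim G - \operatorname{rank} G$. For regular semisimple elements near a point of $G^\rs$, the set $G^\rs$ is open of dimension $\dim G$, so $\bar\Sigma$ can meet $G^\rs$ only if $\dim\bar\Sigma = \dim G$. That would force $\dim Z(G)^\circ \ge \operatorname{rank} G$, i.e.\ $G$ is a torus. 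A lighter way to say this: a closed $G$-stable set meeting $G^\rs$ in a nonempty open subset of $G$ must be all of $G$, because $G^\rs$ is irreducible. So $\bar\Sigma = G$ and $\Sigma$ is dense. But $\Sigma = Z(G)^\circ \cdot C$ consists of elements whose semisimple parts lie in finitely many cosets of $Z(G)^\circ$. Their images in $T/W$ therefore lie in a set of dimension $\dim Z(G)^\circ < \operatorname{rank} G$ when $G$ is not a torus, which is a contradiction.

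It remains to treat tori. When $G$ is a torus, $W$ is trivial and the unique unipotent character sheaf is the constant sheaf, which equals $\cal{A}_{1_W}$. So there are no cuspidal unipotent character sheaves and the statement is vacuous. For $G$ not a torus, the support of $\cal{A}$ avoids $G^\rs$, so the stalks $\cal{H}_g^\ast(\cal{A})$ vanish for $g \in G^\rs(\bb{C})$.

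The main obstacle is to justify the input that cuspidal character sheaves are supported on $Z(G)^\circ \cdot \bar C$, i.e.\ with $L = G$. I would cite this from Lusztig's Character Sheaves I (\S\S 3–4), together with the cleanness results in Character Sheaves V. For $\bb{k} = \bb{C}$, I would pass through the same spreading-out argument used in the proof of \Cref{thm:exotic}. Alternatively, I could deduce the vanishing directly from \eqref{eq:grothendieck}: over $G^\rs$, $\bar K_z$ is a sum of shifts of local systems. Each simple constituent $\cal{A}$ of ${}^p\cal{H}^i(\bar K_z)$ that is nonzero on the open set $G^\rs$ is then the intermediate extension of a $W$-monodromy local system, so it is one of the $\cal{A}_\chi$. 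This second route only needs the fact that the monodromy of $\bar K_z|_{G^\rs}$ factors through $W$, which holds because $\bar Y_z$ pulled back to $G^\rs$ is a fibration trivialized by the $W$-cover $Y_e^\rs$.
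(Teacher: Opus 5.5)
Your main route breaks at the dimension count. The inference that $\bar\Sigma$ can meet $G^\rs$ only if $\dim\bar\Sigma=\dim G$ is false, because a closed set can meet an open set without having full dimension. For instance, take $C$ a regular semisimple class and $G$ not a torus. Then $\overline{Z_G^\circ\cdot C}$ has dimension at most $\dim Z_G^\circ+\dim G-\mathrm{rank}\,G<\dim G$, yet $Z_G^\circ\cdot C$ lies entirely inside $G^\rs$. Your ``lighter'' version has the same flaw, since nothing shows that $\bar\Sigma\cap G^\rs$ is open in $G$. Your count uses nothing that distinguishes the support of a cuspidal sheaf from $Z_G^\circ$ times a regular semisimple class, so it cannot work. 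In particular, the equality $\bar\Sigma=G$ is never established, and with it your contradiction in $T/W$ falls. The missing input is a property of $\Sigma$ that you quote but never use: the semisimple part $s$ of $C$ is isolated, i.e.\ $Z(Z_G(s)^\circ)^\circ=Z_G^\circ$.

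The paper uses the related consequence of \cite[(7.1.2)]{lusztig_85_2} that $G_g/Z_G^\circ$ is unipotent for $g\in\Sigma$. This fails for regular semisimple $g$ (then $G_g$ is a torus) unless $G$ is a torus. Since this only controls $\Sigma$, the paper then invokes cleanness \cite[Theorem~23.1(a)]{lusztig_86_5} to dispose of $\bar\Sigma-\Sigma$.

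Your $T/W$ idea, applied correctly, repairs your route and even makes cleanness unnecessary. Let $\gamma\colon G\to T/W$ be the adjoint quotient, and choose $s\in T$. Then $\Sigma\subseteq\gamma^{-1}(\gamma(Z_G^\circ s))$, which is closed. So every $g\in\bar\Sigma$ has semisimple part conjugate to $zs$ for some $z\in Z_G^\circ$. If $g\in G^\rs$, then $zs$, hence $s$, is regular semisimple, so $Z_G(s)^\circ$ is a maximal torus. Isolatedness forces this torus to equal $Z_G^\circ$, so $G$ is a torus, a case you dispose of correctly. Hence $\bar\Sigma\cap G^\rs=\emptyset$ outright.

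Your alternative route is genuinely different and, if completed, stronger. It needs neither the support classification nor cleanness. It also applies verbatim to every unipotent character sheaf not isomorphic to an $\cal{A}_\chi$, which is the paper's definition of ``cuspidal'', rather than only to those cuspidal in Lusztig's sense.

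However, its key input is asserted, not proved. Nothing shows that $\bar Y_z\times_G Y_e^\rs\to Y_e^\rs$ is a trivial fibration. What you actually need is that $\cal{H}^\ast(\bar K_z)|_{G^\rs}$ is locally constant, with monodromy killed by $\pi_1(Y_e^\rs)=\pi_1(T^\rs)$. In the paper, this $W$-factorization of the monodromy is a \emph{consequence} of the lemma, via the proof of \Cref{thm:rs}. So you need an independent argument. One option is $T$-equivariant localization on $\bar Y_{z,t}$ for $t\in T^\rs$. The $T$-fixed points are the $|W|$ Borels containing $T$, independent of $t$. The stalks of $J_z$ there are constant, because $\act$ is smooth and sends these points to the diagonal.
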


\begin{proof}
Let $Z_G \subseteq G$ be the center and $Z_G^\circ \subseteq Z_G$ its neutral component.

By \cite[(7.1.2)]{lusztig_85_2}, $\cal{A} = \IC(\bar{\Sigma}, \cal{E})[d]$ for some $(G \times Z_G^\circ)$-orbit $\Sigma \subseteq G$, some simple $(G \times Z_G^\circ)$-equivariant local system $\cal{E}$ on $\Sigma$, and some integer $d$.
Moreover, by~\cite[Theorem~23.1(a)]{lusztig_86_5}, $\cal{A}$ is \dfemph{clean}:
Its restriction to $\bar{\Sigma} - \Sigma$ is zero.
(Strictly speaking, \emph{loc.~cit.}\ is for $G$ over the algebraic closure of a finite field.
We deduce the result over $\bb{C}$ by spreading out, as in the proof of \Cref{thm:exotic}.
Note that \emph{loc.~cit.} relies on definitions in~\cite[(13.9.2)]{lusztig_85_3} and~\cite[(7.7)]{lusztig_85_2}.)
It remains to show that $\Sigma$ cannot contain $g$.

If $G$ is a torus, then the only unipotent character sheaf is the constant sheaf, so $G$ cannot be a torus.
So $G_g/Z_G^\circ$ must be nontrivial, where $G_g \subseteq G$ is the centralizer of $g$.
But if $g \in \Sigma$, then~\cite[(7.1.2)]{lusztig_85_2} shows that $G_g/Z_G^\circ$ must be unipotent, whereas $G_g$ is a torus.
\end{proof}

\begin{proof}[Proof of \Cref{thm:rs}]
Fix $g \in G^\rs(\bb{C})$ and $w \in W$.
We are claiming that
\begin{align}\label{eq:tau-proof}
\sf{v}^{\ell(\z)}
	\tau_{\z, \sf{v}}(w) = \sum_i {\sf{v}^{2i}} \tr(w \mid \ur{IH}^{2i}(\bar{Y}_{\z, g})),
\end{align}
where the $W$-action on $\ur{IH}^\ast(\bar{Y}_{\z, g})$ arises from the monodromy action of $\pi_1(G^\rs, g)$.
In the ring of polynomials in $\sf{v}$ over the representation ring of $W$ with $\QL$-coefficients, we have
\begin{align}
&\sum_j {(-\sf{v})^j}\,
	\ur{IH}^j(\bar{Y}_{\z, g})\\
	&= \sum_j {(-\sf{v})^j}\,
			\cal{H}_g^j(\bar{K}_\z)
		&&\text{by \Cref{lem:ih}}\\
	&= \sum_{i, j} {(-\sf{v})^j}\,
			\cal{H}_g^{j - i}({}^p\cal{H}^i(\bar{K}_\z))
		&&\text{by the decomposition theorem}\\
	&= \sum_k {(-\sf{v})^{k}}\, \cal{H}_g^k
			\left(\sum_i {(-\sf{v})^i}\, {}^p\cal{H}^i(\bar{K}_\z)\right)\\
	&= \sum_k {(-\sf{v})^{k}}\, \cal{H}_g^k
			\left(\sum_{i, \chi} {(-\sf{v})^i}(\cal{A}_\chi : {}^p\cal{H}^i(\bar{K}_\z))\cal{A}_\chi\right)
		&&\text{by \Cref{lem:vanish}}\\
	&= \sf{v}^{\ell(\z)} \sum_{k, \chi, \psi} {(-\sf{v})^{k + \dim G}} {\{\chi, \psi\}}\psi_\sf{v}(c_\z)\,
			\cal{H}_g^k(\cal{A}_\chi)
		&&\text{by \Cref{thm:exotic}}.
\end{align}
By \Cref{lem:monodromy}, $\cal{H}_g^\ast(\cal{A}_\chi)$ is concentrated in degree $-{\dim G}$, and the monodromy action on it factors through a representation of $W$ with character $\chi$.
Combining this with the preceding chain of equalities,
\begin{align}
\sf{v}^{\ell(\z)} \sum_{\chi, \psi}
	{\{\chi, \psi\}}\psi_\sf{v}(c_\z)\chi(w)
	= \sum_j {(-\sf{v})^j}\,
	\tr(w \mid \ur{IH}^j(\bar{Y}_{\z, g})).
\end{align}
The left-hand side is $\sf{v}^{\ell(\z)} \tau_{\z, \sf{v}}(w)$.

Finally, Theorem 1.6(1) and display (9.3) in~\cite{an_25} together show that when $g$ is regular semisimple, $\ur{IH}^\ast(\bar{Y}_{\z, g})$ vanishes in odd degrees and $\dim \bar{Y}_{\z, g} = \dim Y_{\z, g} = \ell(\z)$, as mentioned in \S\ref{subsec:lusztig}.
So the right-hand side of the last display simplifies to the right-hand side of \eqref{eq:tau-proof}.
\end{proof}

\begin{rem}\label[rem]{rem:parabolic}
Fix a parabolic subgroup $W' \subseteq W$.
%For any character $\psi \in \Irr(W)$, let $\dim \psi^{W'}$ be the scalar product of $\psi|_{W'}$ with the trivial character of $W'$.
Abreu--Nigro's \cite[Theorem~1.5]{an_25} asserts that for all $\z \in W$ and $g \in G^\rs(\bb{C})$, we have
\begin{align}
\sf{v}^{\ell(\z)} \sum_\psi 
	\psi_\sf{v}(c_\z) \dim V_\psi^{W'}
	= \sum_i \sf{v}^{2i} \dim \ur{IH}^{2i}(\bar{Y}_{\z, g})^{W'}.
\end{align}
By comparing with \Cref{thm:rs}, and using the fact that the character table of the Hecke algebra specializes to that of $W$, which is invertible, we deduce the following curious property of the exotic Fourier transform:
\begin{align}\label{eq:exotic-parabolic}
\sum_\chi 
{\{\chi, \psi\}}\dim V_\chi^{W'}
	= \dim V_\psi^{W'}
	\quad\text{for all $\psi \in \Irr(W)$}.
\end{align}
Compare to the \emph{tower equivalence} property of $\{-, -\}$ discussed in~\cite{michel}.
\end{rem}

\section{The Unipotent Locus}\label[section]{sec:unipotent}

The goal of this section is to prove \Cref{thm:unipotent} as well as its generalization to arbitrary $\z \in W$.
We keep the hypotheses and notation of \Cref{sec:setup}.

We write $K_G$ for the equivariant version of any $G$-stable $\QL$-complex $K$ on a scheme of finite type with a $G$-action.
Recall that $\ur{H}_{c, G}^\ast$ denotes $G$-equivariant compactly-supported \'etale cohomology with $\QL$-coefficients, and that we set
\begin{align}
	\bar{\cal{Z}}_\z
	&= Y_e^\unip \times_G \bar{Y}_\z\\ 
	&\simeq Y_e^\unip \times_{G^\unip} \bar{Y}_\z^\unip.
\end{align}
Recall from \S\ref{subsec:setup-springer} that we write $2N = \dim G^\unip = \dim Y_e^\unip$.

\subsection{Rational Smoothness}

An element $\z \in W$ is \dfemph{rationally smooth} when the intersection cohomology complex of $\bar{O}_\z$, the Zariski closure of $O_\z$ in $\cal{B} \times \cal{B}$, is a constant sheaf.
In this case, the intersection cohomology complex of $\bar{Y}_\z$ is also constant: $J_\z = \underline{\QL}_{\bar{Y}_\z}$.
This, in turn, implies that $\bar{K}_\z = \bar{\pi}_{\z, !} \underline{\QL}_{\bar{Y}_\z} = \bar{\pi}_{\z, \ast} \underline{\QL}_{\bar{Y}_\z}$.

%, where $N$ is the number of positive roots of $G$.

\begin{lem}\label[lem]{lem:hom-spr}
For any $\z \in W$ and integer $i$, we have 
\begin{align}
\ur{H}_{c, G}^i(\bar{\cal{Z}}_\z)^\vee \simeq \Hom^{2N - i}(\bar{\pi}_{\z, !}^\unip \underline{\QL}_{\bar{Y}_\z^\unip, G}, \cal{S}_G),
\end{align}
realizing the graded action of $\Hom^\ast(\cal{S}_G, \cal{S}_G)$ on $\ur{H}_{c, G}^i(\bar{\cal{Z}}_\z)$ via (derived) composition.
In particular, if $\z$ is rationally smooth, then
\begin{align}
\ur{H}_{c, G}^i(\bar{\cal{Z}}_\z)^\vee \simeq \Hom^{2N - i}(\bar{K}_{\z, G}|_{G^\unip}, \cal{S}).
\end{align}
\end{lem}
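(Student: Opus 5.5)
We prove the first isomorphism by standard base change and duality in the $G$-equivariant derived category; the second is then a specialization.

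Write $p \colon \bar{\cal{Z}}_\z \to G^\unip$ for the projection, so that $\bar{\cal{Z}}_\z = Y_e^\unip \times_{G^\unip} \bar{Y}_\z^\unip$ and $p$ factors through either factor. Proper base change for the square formed by $\pi_e^\unip$ and $\bar{\pi}_\z^\unip$ gives
\begin{align}
p_! \underline{\QL}_{\bar{\cal{Z}}_\z} \simeq \pi^\unip_{e,!}\underline{\QL} \otimes \bar{\pi}^\unip_{\z,!}\underline{\QL} = \cal{S}[-2N] \otimes \bar{\pi}^\unip_{\z,!}\underline{\QL}.
\end{align}
Writing $a \colon G^\unip \to \mathrm{pt}$, we get $\ur{H}_{c,G}^i(\bar{\cal{Z}}_\z) = \ur{H}^i(a_! p_!\underline{\QL})$, equivariantly.

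Next, apply Verdier duality $\mathbb{D}$. Taking duals gives
\begin{align}
\ur{H}_{c,G}^i(\bar{\cal{Z}}_\z)^\vee \simeq \ur{H}^{-i}\bigl(a_\ast \mathbb{D}(\cal{S}[-2N]\otimes K)\bigr), \qquad K := \bar{\pi}^\unip_{\z,!}\underline{\QL}.
\end{align}
We then use $\mathbb{D}(A\otimes B) \simeq \mathcal{H}om(A,\mathbb{D}B)$ together with $\mathbb{D}\cal{S} \simeq \cal{S}$. The latter holds because $\pi_e^\unip$ is proper with smooth source of dimension $2N$, so $\mathbb{D}(\pi^\unip_{e,!}\underline{\QL}[2N]) = \pi^\unip_{e,\ast}\underline{\QL}[2N]$ up to a Tate twist, which we ignore over $\bb{C}$. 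This yields
\begin{align}
\mathbb{D}(\cal{S}[-2N]\otimes K) \simeq \mathcal{H}om(K, \cal{S}[2N]) \simeq \mathcal{H}om(K,\cal{S})[2N].
\end{align}
Taking $\ur{H}^{-i}(a_\ast -)$ then gives $\Hom^{2N-i}(K,\cal{S}_G)$, where all functors are taken in the equivariant derived category.

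The main obstacle is the bookkeeping needed for compatibility with the $\Hom^\ast(\cal{S}_G,\cal{S}_G)$-action. The action of $\ur{H}^{\ur{BM},G}_\ast(\cal{Z})$ on $\ur{H}_{c,G}^\ast(\bar{\cal{Z}}_\z)$ is by convolution on the $Y_e^\unip$ factor. Under the identification of $\ur{H}^{\ur{BM},G}_\ast(\cal{Z})$ with $\Hom^\ast(\cal{S}_G,\cal{S}_G)$, itself obtained by the same base change and duality argument as in \cite{trinh} and \cite[\S8.6]{achar}, this convolution becomes post-composition on the second argument of $\Hom(K,\cal{S}_G)$. We would check this by tracking the standard construction. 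Both isomorphisms are natural in the first factor $\pi^\unip_{e,!}\underline{\QL}$, so an endomorphism of $\cal{S}$ acts on both sides through functoriality of base change and of $\mathbb{D}$. The only subtlety is that $\mathbb{D}$ reverses the direction of arrows. This is harmless because dualizing $\ur{H}_{c,G}$ turns the right module into a left module compatibly, possibly up to the anti-involution $w\mapsto w^{-1}$; that involution is immaterial for characters.

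For the rationally smooth case, $J_\z = \underline{\QL}_{\bar{Y}_\z}$, so $\bar{K}_\z = \bar{\pi}_{\z,!}\underline{\QL}_{\bar{Y}_\z}$, as noted just before the lemma. Proper base change along $G^\unip \hookrightarrow G$ gives $\bar{K}_\z|_{G^\unip} \simeq \bar{\pi}^\unip_{\z,!}\underline{\QL}_{\bar{Y}^\unip_\z}$ equivariantly. Substituting this into the first isomorphism gives the second.
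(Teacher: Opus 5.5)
Your argument is correct and uses the same ingredients as the paper's proof: base change, Verdier duality, and the smoothness of $Y_e^\unip$ together with the properness of $\pi_e^\unip$. It also treats equivariant duality and compatibility with the $\Hom^\ast(\mathcal{S}_G,\mathcal{S}_G)$-action at the same level of detail. The difference is only in bookkeeping. The paper dualizes upstairs on $Y_e^\unip$, using $\omega_{Y_e^\unip}\simeq\underline{\QL}[4N]$, the base change $\bar{\Pi}^\unip_{\z,!}\underline{\QL}\simeq\pi_e^\ast\bar{\pi}^\unip_{\z,!}\underline{\QL}$, and the adjunction $\pi_e^\ast\dashv\pi_{e,\ast}$. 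You instead push everything down to $G^\unip$ via the projection formula and then use $\mathbb{D}(A\otimes B)\simeq\mathcal{H}om(A,\mathbb{D}B)$ and $\mathbb{D}\mathcal{S}\simeq\mathcal{S}$, which repackages the same smoothness and properness input.
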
 

\begin{proof}
Let $\bar{\Pi}_\z^\unip \colon \bar{\cal{Z}}_\z \to Y_e^\unip$ be the pullback of $\bar{\pi}_\z^\unip \colon \bar{Y}_\z^\unip \to G^\unip$.
Let $\omega_{Y_e^\unip, G}$ be the $G$-equivariant dualizing complex of $Y_e^\unip$.
We compute
\begin{align}
\ur{H}_c^i(\bar{\cal{Z}}_\z)^\vee 
	&= \ur{H}_c^i(Y_e^\unip, \bar{\Pi}_{\z,!}^\unip \underline{\QL}_{\bar{\cal{Z}}_\z, G})^\vee\\
	&\simeq \Hom^{-i}(\bar{\Pi}_{\z,!}^\unip \underline{\QL}_{\bar{\cal{Z}}_\z, G}, \omega_{Y_e^\unip, G})
		&&\text{by Verdier}\\
	&\simeq \Hom^{4N - i}(\bar{\Pi}_{\z,!}^\unip \underline{\QL}_{\bar{\cal{Z}}_\z, G}, \underline{\QL}_{Y_e^\unip, G})
		&&\text{by smoothness of $Y_e^\unip$}\\
	&\simeq \Hom^{4N - i}(\pi_e^\ast \bar{\pi}_{\z,!}^\unip \underline{\QL}_{\bar{Y}_\z^\unip, G}, \underline{\QL}_{Y_e^\unip, G})
		&&\text{by base change}\\
	&\simeq \Hom^{2N- i}(\bar{\pi}_{\z,!}^\unip \underline{\QL}_{\bar{Y}_\z^\unip, G}, \cal{S}_G),
\end{align}
as claimed.
\end{proof}

The preceding lemma reduces the study of $\ur{H}_{c, G}^\ast(\bar{\cal{Z}}_\z)$ to that of $\Hom^\ast(\cal{A}_G|_{G^\unip}, \cal{S}_G)$ for unipotent character sheaves $\cal{A}$.
Note that since $\bar{\pi}_\z$ is $G$-equivariant, $\bar{K}_\z$ and its shifted perverse summands are indeed $G$-stable, so the equivariant $\QL$-complexes $\cal{A}_G$ are indeed well-defined.

Recall the complexes $\cal{S}_\chi = \cal{A}_\chi[2N - \dim G]|_{G^\unip}$ in \eqref{eq:springer}.
Since the left $W^\op$-action on the right-hand side of \eqref{eq:end-spr} corresponds to the left $W^\op$-action on the left-hand side arising from \eqref{eq:springer}, \Cref{thm:end-spr} implies:

\begin{lem}[Lusztig]\label[lem]{lem:hom-spr-chi}
\begin{enumerate} 
\item 	$\Hom^i(\cal{S}_{\chi, G}, \cal{S}_G)$ vanishes in odd degrees $i$.

\item 	For all $j$, there is a $W$-equivariant isomorphism
\begin{align} 
\Hom^{2j}(\cal{S}_{\chi, G}, \cal{S}_G)
	\simeq \Sym^j(\QL \otimes \mathbb{X}) \otimes V_\chi,
	\end{align}
	where the $W$-action on the left-hand side arises from $\Hom^0(\cal{S}_G, \cal{S}_G) \simeq \QL W$.

\end{enumerate}
\end{lem}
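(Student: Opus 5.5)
The plan is to derive both parts from \Cref{thm:end-spr} by splitting $\cal{S}_G$ into isotypic pieces and reading off one summand. By \eqref{eq:springer}, upgraded $G$-equivariantly, we have $\cal{S}_G \simeq \bigoplus_{\chi} V_\chi \otimes \cal{S}_{\chi, G}$. Here the equivariant upgrade of the decomposition is legitimate: the summands $\cal{S}_\chi$ are $G$-stable perverse sheaves on $G^\unip$, and the idempotents of $\QL W \simeq \Hom^0(\cal{S}_G, \cal{S}_G)$ cut them out equivariantly. Consequently
\begin{align}
\Hom^\ast(\cal{S}_G, \cal{S}_G) \simeq \bigoplus_{\chi} V_\chi^\vee \otimes \Hom^\ast(\cal{S}_{\chi, G}, \cal{S}_G).
\end{align}
This is compatible with the actions: precomposition by $\QL W$ acts through the factors $V_\chi^\vee$, and postcomposition acts on the second factors. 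In other words, $\Hom^\ast(\cal{S}_{\chi,G}, \cal{S}_G)$ is the multiplicity space of $V_\chi^\vee$ (equivalently $V_\chi$, since characters are real) for the right action of $W$ on the endomorphism algebra by precomposition. Equivalently, it equals $e_\chi$-isotypic data: we can write $\Hom^\ast(\cal{S}_{\chi,G}, \cal{S}_G) \simeq \Hom_W(V_\chi^\vee, \Hom^\ast(\cal{S}_G,\cal{S}_G))$ for the precomposition action.

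Part (1) then follows at once. By \Cref{thm:end-spr}(1), odd-degree endomorphisms of $\cal{S}_G$ vanish, and $\Hom^i(\cal{S}_{\chi,G}, \cal{S}_G)$ is a direct summand of $\Hom^i(\cal{S}_G,\cal{S}_G)$, so it vanishes for odd $i$ as well.

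For part (2), we use \Cref{thm:end-spr}(2): $\Hom^{2\ast}(\cal{S}_G,\cal{S}_G) \simeq \Sym^\ast(\QL\otimes\mathbb{X}) \rtimes \QL W$. As a graded vector space this is $\Sym^\ast(\QL\otimes \mathbb{X}) \otimes \QL W$. We order the factors as $f \otimes w$, so that right multiplication by $W$ acts only on the $\QL W$ factor by right translation. The left action of $W$ then acts diagonally: $u\cdot(f\otimes w) = u(f)\otimes uw$. The right action, which is precomposition, is identified in the statement with the $W^\op$-action coming from \eqref{eq:springer}. Taking the $V_\chi^\vee$-multiplicity space for the right regular action of $\QL W \simeq \bigoplus_\chi V_\chi\otimes V_\chi^\vee$ gives
\begin{align}
\Hom_W(V_\chi^\vee, \Sym^j \otimes \QL W) \simeq \Sym^j(\QL\otimes\mathbb{X}) \otimes V_\chi,
\end{align}
with the residual left $W$-action diagonal. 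This is exactly the claimed $W$-equivariant isomorphism, where the action on the left-hand side is postcomposition via $\Hom^0(\cal{S}_G,\cal{S}_G)\simeq \QL W$.

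The main obstacle is bookkeeping of sides and conventions, namely which of the left and right actions corresponds to the Galois action defining $V_\chi$, and whether one gets $V_\chi$ or $V_\chi^\vee$. The latter ambiguity is harmless, because characters of Weyl groups are self-dual, as already used in \Cref{lem:monodromy}. I would fix conventions by checking in degree $0$: $\Hom^0(\cal{S}_{\chi,G},\cal{S}_G) \simeq V_\chi$ directly from \eqref{eq:springer}, since the $\cal{S}_\chi$ are pairwise non-isomorphic simple objects. The degree-$0$ statement of \Cref{thm:end-spr}(2) then guarantees that the smash-product ordering above matches.
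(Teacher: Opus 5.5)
Your proposal is correct and takes the same route as the paper. The paper obtains the lemma in one line from \Cref{thm:end-spr}, noting that precomposition by $W$ (the $W^\op$-action coming from \eqref{eq:springer}) corresponds to right multiplication by $\QL W$ in $\Sym^\ast(\QL \otimes \mathbb{X}) \rtimes \QL W$, and then taking the $\chi$-isotypic part. You simply make explicit the bookkeeping the paper leaves implicit: the summand argument for odd degrees, the diagonal left action on $\Sym^j \otimes \QL W$, and the bimodule decomposition of $\QL W$.
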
 

Now suppose that the unipotent character sheaf $\cal{A}$ is cuspidal.
By reducing to the case where $G$ is almost simple and adjoint, and using the classification of cuspidal character sheaves on such groups in~\cite{lusztig_86_5}, one can check that $\cal{A}|_{G^\unip}$ is always either zero or a shift of a perverse sheaf on $G^\unip$ that is \emph{cuspidal} in the sense of~\cite[Definition~2.3]{rr}:
For details, see~\cite[\S{2}]{aubert}.
Thus, \cite[Theorem~3.5]{rr}, or rather its analogue with $G^\unip$ in place of the nilpotent cone, implies:

\begin{lem}[Lusztig, Rider--Russell]\label[lem]{lem:rr}
If $\cal{A}$ is a cuspidal unipotent character sheaf on $G$, then $\Hom^\ast(\cal{A}_G|_{G^\unip}, \cal{S}_G)$ vanishes.
\end{lem}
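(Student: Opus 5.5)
Since the paper attributes the statement to Lusztig and Rider--Russell, the plan is to reduce it to a cuspidality vanishing statement for the Springer sheaf. Fix a cuspidal unipotent character sheaf $\cal{A}$. As recalled just before the statement, reduction to $G$ almost simple and adjoint plus the classification in~\cite{lusztig_86_5} (see~\cite[\S{2}]{aubert}) shows that $\cal{A}|_{G^\unip}$ is either zero or $\cal{F}[d]$ with $\cal{F}$ a cuspidal perverse sheaf on $G^\unip$ in the sense of~\cite[Definition~2.3]{rr}. In the zero case, $\cal{A}_G|_{G^\unip}=0$ and there is nothing to prove. So the work is to show that $\Hom^\ast(\cal{F}_G, \cal{S}_G)=0$ for every cuspidal $G$-equivariant perverse sheaf $\cal{F}$ on $G^\unip$.

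For this I would follow~\cite[Theorem~3.5]{rr}, transported from the nilpotent cone to $G^\unip$. The transport is harmless: in good characteristic, or over $\bb{C}$, a $G$-equivariant Springer isomorphism $G^\unip \simeq \cal{N}$ identifies the Springer resolutions, the Springer sheaves and the notions of cuspidality.

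The argument itself has three steps. First, $\cal{S}_G$ is parabolically induced from the constant sheaf on the unipotent locus of a maximal torus $T$, i.e.\ from the Borel. Second, by adjunction,
\[
\Hom^\ast(\cal{F}_G, \cal{S}_G) \simeq \Hom^\ast_T(\mathrm{Res}^G_T \cal{F}_G, \underline{\QL}),
\]
where the restriction functor is computed by hyperbolic localization $\pi_! \iota^\ast$ along $B \to T$. This requires using the equivariant version of the adjunction, with the correct (right) adjoint in the $G$-equivariant derived category; if the right adjoint is naturally hyperbolic restriction with the opposite Borel, Braden's theorem identifies the two. Third, cuspidality of $\cal{F}$ means exactly that all proper parabolic restrictions vanish, so $\mathrm{Res}^G_T \cal{F}_G = 0$ and the Hom vanishes. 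When $G$ is a torus there are no cuspidal unipotent character sheaves, so that case does not arise.

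I expect the main obstacle to be the first reduction. One must check that the restriction of a cuspidal character sheaf to $G^\unip$ really is cuspidal in the Rider--Russell sense, and not merely supported on a distinguished class with a cuspidal local system, so that the restriction step applies. Here I would rely on Lusztig's cleanness (as in \Cref{lem:vanish}): $\cal{A}$ is $\IC(\bar\Sigma, \cal{E})$, clean, with $\Sigma$ isolated. Consequently $\cal{A}|_{G^\unip}$ is the extension by zero of the cuspidal local system on the unipotent part of $\Sigma$, and its cuspidality follows from Lusztig's generalized Springer correspondence.
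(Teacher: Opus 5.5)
Your proposal follows the paper's argument: the same reduction, via Lusztig's classification as explained in \cite[\S{2}]{aubert}, to a shifted perverse sheaf on $G^\unip$ that is cuspidal in the sense of \cite[Definition~2.3]{rr}, followed by the $G^\unip$-analogue of \cite[Theorem~3.5]{rr}; you additionally unpack the latter as the $\Ind$--$\Res$ adjunction for $\cal{S}_G \simeq \Ind_{T \subset B}^G \underline{\QL}$ (up to shift). One small correction and one remark: the adjoint you need is the \emph{left} adjoint of induction, which is exactly $\pi_!\iota^\ast$, so Braden's theorem is not needed; and since proper base change gives $\Res_{T \subset B}(\cal{A}|_{G^\unip}) \simeq (\Res_{T \subset B}^G \cal{A})|_{\{1\}}$, the vanishing follows directly from the vanishing of the parabolic restriction of the cuspidal sheaf $\cal{A}$ on $G$ itself, which sidesteps the step you identified as the main obstacle.
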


\begin{proof}[Proof of \Cref{thm:unipotent}]
Suppose that $\z$ is rationally smooth, and fix $w \in W$.
We are claiming that
\begin{align}
\sum_j {(-\sf{v})^j} \tr(w \mid \ur{H}_{c, G}^j(\bar{\cal{Z}}_\z))
	= (-1)^{\ell(w)} \sf{v}^{2\dim G + \ell(\z)}
		\mathbb{S}_{\sf{v}^2}(w) \tau_{\z, \sf{v}}(w),
\end{align} 
where the $W$-action on $\ur{H}_{c, G}^\ast(\bar{\cal{Z}}_\z)$ arises from the action of $\Hom^0(\cal{S}_G, \cal{S}_G)$.
Since characters of Weyl groups are self-dual, the trace of $w$ on $\ur{H}_{c, G}^j(\bar{\cal{Z}}_\z)$ is also its trace on $\ur{H}_{c, G}^j(\bar{\cal{Z}}_\z)^\vee$.
In the ring of formal power series in $\sf{v}$ over the representation ring of $W$ with $\QL$-coefficients, we have
\begin{align}
&\sum_j {(-\sf{v})^j}\,
	\ur{H}_{c, G}^j(\bar{\cal{Z}}_\z)^\vee\\
&= \sum_j {(-\sf{v})^j}
	\Hom^{2N - j}(\bar{K}_{\z, G}|_{G^\unip}, \cal{S}_G)
	&&\text{by \Cref{lem:hom-spr}}\\
&= \sum_{i, j} {(-\sf{v})^j}
	\Hom^{2N - j + i}({}^p\cal{H}^i(\bar{K}_{\z, G})|_{G^\unip}, \cal{S}_G)
	&&\text{by the decomp.~thm.}\\
&= \sum_{k, i} {(-\sf{v})^{k + i}} 
	\Hom^{2N - k}({}^p\cal{H}^i(\bar{K}_{\z, G})|_{G^\unip}, \cal{S}_G)\\
&= \sum_{k, i, \chi} {(-\sf{v})^{k + i}}
	(\cal{A}_\chi : {}^p\cal{H}^i(\bar{K}_\z))
	\Hom^{4N - \dim G - k}(\cal{S}_{\chi, G}, \cal{S}_G)
	&&\text{by \Cref{lem:rr}}\\
&= \sf{v}^{\ell(\z)} \sum_{k, \chi, \psi} {(-\sf{v})^k} {\{\chi, \psi\}}\psi_\sf{v}(c_\z)
	\Hom^{4N - k}(\cal{S}_{\chi, G}, \cal{S}_G)
	&&\text{by \Cref{thm:exotic}}\\
&= \sf{v}^{4N + \ell(\z)} \sum_{k, \chi, \psi} {(-\sf{v})^{-j}} {\{\chi, \psi\}}\psi_\sf{v}(c_\z)
	\Hom^j(\cal{S}_{\chi, G}, \cal{S}_G)\\
&= \sf{v}^{4N + \ell(\z)}
	\sum_i {\sf{v}^{-2i}} \Sym^i(\QL \otimes \mathbb{X})
	\otimes 
	\sum_{\chi, \psi} {\{\chi, \psi\}}\psi_\sf{v}(c_\z)
V_\chi
	&&\text{by \Cref{lem:hom-spr-chi}}.
\end{align}
The trace of $w$ on the last formal series is 
\begin{align}
\sf{v}^{4N + \ell(\z)}
	\mathbb{S}_{\sf{v}^{-2}}(w)
	\tau_{\z, \sf{v}}(w).
\end{align}
Let $r$ be the rank of $G$.
%, and let $\varepsilon(w) = (-1)^{\ell(w)}$, as in~\Cref{cor:sign}.
To finish the proof, we use the following trick that also appeared in~\cite{trinh}:
\begin{align}
\mathbb{S}_{\sf{q}^{-1}}(w)
	= \frac{1}{\det(1 - \sf{q}^{-1}w \mid \mathbb{X})}
	= \frac{(-1)^{\ell(w)}\sf{q}^r}{\det(1 - \sf{q}w \mid \mathbb{X})}
	= (-1)^{\ell(w)}\sf{q}^r \mathbb{S}_\sf{q}(w).
\end{align}
Since $2N + r = \dim G$, we arrive at the desired identity.
\end{proof}

%\subsection{LLT Polynomials}

\subsection{Virtual Weight Polynomials}

The results of~\cite{trinh} imply a generalization of \Cref{thm:unipotent} at the cost of greater technicalities.
We sketch the proof below.

For any scheme $X$ of finite type over $\bb{C}$, equipped with a $G$-action, we define the \dfemph{$G$-equivariant virtual weight polynomial} of $X$ as the formal alternating sum
\begin{align}
\ur{E}_G(\sf{v}, X)
	= \sum_i {(-1)^i} \sf{v}^j \gr_j^\sf{W} \ur{H}_{c, G}^i(X),
\end{align} 
where $\sf{W}_{\leq \ast}$ denotes the \dfemph{weight filtration} on $\ur{H}_{c, G}^i(X)$ arising from Hodge theory.
If $W$ acts on $\ur{H}_{c, G}^i(X)$ for all $i$, then $\ur{E}_G(X)$ can be upgraded to a polynomial over the representation ring of $W$ with $\QL$-coefficients.
In particular, for $X = \bar{\cal{Z}}_\z$, the weight filtration coincides with the cohomological filtration, giving 
\begin{align}
\ur{E}_G(\bar{\cal{Z}}_\z) = \sum_i {(-\sf{v})^i} \ur{H}_{c, G}^i(\bar{\cal{Z}}_\z).
\end{align}
This follows from the chain of equalities in the proof of \Cref{thm:unipotent}, computing $\ur{H}_{c, G}^i(\bar{\cal{Z}}_\z)^\vee$ in terms of the spaces $\Hom^\ast(\cal{S}_{\chi, G}, \cal{S}_G)$.
With respect to the natural Weil structures on the $\QL$-complexes, these equalities arise from weight-preserving isomorphisms, and as in~\cite{rr}, the weight filtration on $\Hom^\ast(\cal{S}_{\chi, G}, \cal{S}_G)$ coincides with the filtration by $\Hom$-degree.

The function $X \mapsto \ur{E}_G(\sf{v}, X)$ factors through the Grothendieck ring of schemes of finite type with $G$-actions.
In particular, if $U \subseteq X$ is an open $G$-stable subscheme, then $\ur{E}_G(\sf{v}, X) = \ur{E}_G(\sf{v}, U) + \ur{E}_G(\sf{v}, X \setminus U)$.
This leads us to consider the open, $G$-stable subscheme $\cal{Z}_\z \subseteq \bar{\cal{Z}}_\z$ defined by
\begin{align}
\cal{Z}_\z
	&= Y_e^\unip \times_G Y_\z\\ 
	&\simeq Y_e^\unip \times_{G^\unip} Y_\z^\unip.
\end{align}
Observe that $\ur{H}_c^\ast(\cal{Z}_\z)$ receives a graded $\Hom^\ast(\cal{S}_G, \cal{S}_G)$-action, hence a $W$-action, from an analogue of \Cref{lem:hom-spr}.

In~\cite{trinh}, we prove an analogue of \Cref{thm:unipotent} for arbitrary $\z$, with $\cal{Z}_\z$ in place of $\bar{\cal{Z}}_\z$.
Namely, writing $(\delta_w)_w$ for the standard Hecke basis as in \Cref{sec:intro}, we have
\begin{align} 
\tr(w \mid \ur{E}_G(\cal{Z}_\z))
	= (-1)^{\ell(w)} \sf{v}^{2\dim G} \mathbb{S}_{\sf{v}^2}(w)
		\cdot 
		\sum_{\chi, \psi} {\{\chi, \psi\}}\psi_\sf{v}(\delta_\z)\chi(w).
\end{align} 
For all $y \leq \z$ in the Bruhat order on $W$, let $p_{y, \z}(\sf{q}) \in \bb{Z}[\sf{q}]$ denote the associated Kazhdan--Lusztig polynomial, so that
\begin{align}
\sf{v}^{\ell(\z)} c_\z 
	= \sum_{y \leq \z} 
		p_{y, \z}(\sf{v}^2)
		\delta_y.
\end{align}
We conclude:

\begin{thm}\label[thm]{thm:unipotent-kl}
For any $\z \in W$ and $w \in W$, we have
\begin{align}
\sum_{y \leq \z} 
	p_{y, \z}(\sf{v}^2) \tr(w \mid \ur{E}_G(\cal{Z}_y))
	= (-1)^{\ell(w)} \sf{v}^{2\dim G + \ell(\z)}
		\mathbb{S}_{\sf{v}^2}(w) \tau_{\z, \sf{v}}(w),
\end{align}
where the virtual $W$-action on $\ur{E}_G(\cal{Z}_y)$ arises from the $W$-action on $\ur{H}_{c, G}^\ast(\cal{Z}_y)$: \emph{i.e.}, from convolution.
\end{thm}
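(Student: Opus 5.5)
The plan is to derive \Cref{thm:unipotent-kl} formally from the formula of~\cite{trinh} for the open pieces $\cal{Z}_y$, which is quoted just before the statement, by summing over $y \leq \z$ with Kazhdan--Lusztig weights.

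First, I fix $w \in W$ and $y \leq \z$, and apply the displayed identity from~\cite{trinh} to $y$ in place of $\z$:
\begin{align}
\tr(w \mid \ur{E}_G(\cal{Z}_y))
	= (-1)^{\ell(w)} \sf{v}^{2\dim G} \mathbb{S}_{\sf{v}^2}(w)
		\sum_{\chi, \psi} {\{\chi, \psi\}}\psi_\sf{v}(\delta_y)\chi(w).
\end{align}
I multiply this by $p_{y, \z}(\sf{v}^2)$ and sum over $y \leq \z$. The prefactor $(-1)^{\ell(w)} \sf{v}^{2\dim G} \mathbb{S}_{\sf{v}^2}(w)$ does not depend on $y$, so it pulls out of the sum. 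Each $\psi_\sf{v}$ is $\bb{Z}[\sf{v}^{\pm 1}]$-linear on $H_W$, so
\begin{align}
\sum_{y \leq \z} p_{y, \z}(\sf{v}^2)\psi_\sf{v}(\delta_y) = \psi_\sf{v}\Bigl(\sum_{y \leq \z} p_{y, \z}(\sf{v}^2)\delta_y\Bigr) = \sf{v}^{\ell(\z)}\psi_\sf{v}(c_\z).
\end{align}
Substituting this and recognizing $\sum_{\chi,\psi}\{\chi,\psi\}\psi_\sf{v}(c_\z)\chi(w) = \tau_{\z,\sf{v}}(w)$ gives the right-hand side of \Cref{thm:unipotent-kl}.

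The substantive input is therefore the formula of~\cite{trinh} for the open varieties $\cal{Z}_y$ with arbitrary $y$, together with the compatibility of its $W$-action with the convolution action described after \Cref{lem:hom-spr}. If I had to justify that formula rather than cite it, I would repeat the proof of \Cref{thm:unipotent} with $\pi_{y,!}\underline{\QL}_{Y_y}$ in place of $\bar{K}_\z$. This complex is not semisimple, but in the Grothendieck group of mixed complexes one can still express its class through the $\bar{K}_{y'}$ using inverse Kazhdan--Lusztig polynomials, and then apply \Cref{thm:exotic}, \Cref{lem:rr} and \Cref{lem:hom-spr-chi}.

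The main obstacle is bookkeeping of weights. The weight filtration on $\ur{H}_{c,G}^\ast(\cal{Z}_y)$ need not agree with cohomological degree, so one must pass to virtual weight polynomials. One must also check that the Tate twists match the powers of $\sf{v}$ from the standard basis $\delta_y$; this is exactly why the statement is phrased with $\ur{E}_G$.
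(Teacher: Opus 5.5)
Your proposal is correct and is essentially the paper's argument. The paper likewise quotes the formula from \cite{trinh} for the open pieces $\cal{Z}_y$, multiplies by $p_{y,\z}(\sf{v}^2)$, and sums over $y \leq \z$, using $\sf{v}^{\ell(\z)} c_\z = \sum_{y \leq \z} p_{y,\z}(\sf{v}^2)\delta_y$ and the $\bb{Z}[\sf{v}^{\pm 1}]$-linearity of each $\psi_\sf{v}$. Your closing sketch of how to re-derive the formula of \cite{trinh} goes beyond what the paper does and is not needed for this deduction.
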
 

When $\z$ is rationally smooth, $p_{y, \z}(\sf{q}) = 1$ for all $y \leq \z$.
So in this case, \Cref{thm:unipotent-kl} specializes to \Cref{thm:unipotent} by the additivity of $\ur{E}_G$.

\section{Induction Theorems}\label[section]{sec:parabolic}

The goal of this section is to prove \Cref{cor:parabolic}, a refinement of \Cref{prop:parabolic}.
We keep the hypotheses and notation of \Cref{sec:setup}.

We identify $W$ with $N_G(T)/T$ for a fixed maximal torus $T \subseteq G$.
Recall that the system of simple reflections $S \subseteq W$ determines and is determined by a choice of Borel $B \subseteq G$ containing $T$.

Fix a subset $S' \subseteq S$.
Let $W'$ be the parabolic subgroup of $W$ generated by $S'$.
Let $H_{W'}$ be the Hecke algebra of $W'$.
The inclusion $S' \subseteq S$ induces an inclusion $H_{W'} \subseteq H_W$.

In what follows, we write $\tau_{\z, \sf{v}}^W$, $\tau_{\z, \sf{v}}^{W'}$ in place of $\tau_{\z, \sf{v}}$, to avoid ambiguity.
Let $\Ind_{W'}^W$ denote induction of class functions from $W'$ to $W$.
The following result is proved in~\cite{trinh}, as part of a more general compatibility for the linear functions on $H_{W'}$ and $H_W$ defined by $c_\z \mapsto \tau_{\z, \sf{v}}^{W'}$ and $c_\z \mapsto \tau_{\z, \sf{v}}^W$.

\begin{thm}\label[thm]{thm:tau-ind}
	For all $\z \in W'$, we have
	\begin{align}
		\tau_{\z, \sf{v}}^W = \Ind_{W'}^W \tau_{\z, \sf{v}}^{W'}
	\end{align}
	as class functions on $W$.
\end{thm}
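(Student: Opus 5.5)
The plan is to prove this geometrically, using \Cref{thm:rs} for both $G$ and the Levi subgroup $G' \supseteq T$ attached to $S'$. The theorem is really a statement about the exotic Fourier transform paired with Hecke characters, but the geometry makes the induction visible. Since the character table of $W$ is invertible, it suffices to show that for $g \in T^\rs(\bb{C}) \vcentcolon= T(\bb{C}) \cap G^\rs(\bb{C})$, the graded $W$-representation $\ur{IH}^\ast(\bar{Y}_{\z, g})$ is isomorphic to $\Ind_{W'}^W$ of the graded $W'$-representation $\ur{IH}^\ast(\bar{Y}'_{\z, g})$. Here $\bar{Y}'_{\z, g}$ is the closed Lusztig variety of $G'$. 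Note that $g$ is also regular semisimple in $G'$, and that $\ell(\z)$ is the same whether computed in $W'$ or in $W$, so the powers $\sf{v}^{\ell(\z)}$ on both sides of \Cref{thm:rs} agree.

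The first step is to decompose the fiber. Let $\cal{P}$ be the variety of parabolics of type $S'$, with projection $\cal{B} \to \cal{P}$. Because $\z \in W'$, every $B \in \bar{Y}_{\z, g}$ is in relative position $y \leq \z$, $y \in W'$, with $g^{-1}Bg$. Hence $B$ and $g^{-1}Bg$ lie in a common parabolic $P$ of type $S'$, which forces $g^{-1}Pg = P$, i.e.\ $g \in P$. Since $g$ is regular semisimple in $T$, the parabolics of type $S'$ containing $g$ are exactly the $|W/W'|$ parabolics ${}^{x}P_{S'}$ containing $T$, indexed by $x \in W/W'$. Within the fiber $\cal{B}_P \simeq$ flag variety of the Levi $L_P \supseteq T$, the condition cuts out the closed Lusztig variety of $L_P$ for $(\z, g)$, up to the conjugation identifying $L_P$ with $G'$. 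So $\bar{Y}_{\z, g}$ is a disjoint union of $|W/W'|$ closed-and-open pieces, each isomorphic to $\bar{Y}'_{x^{-1} g x, \z}$. Intersection cohomology is therefore the direct sum of the intersection cohomologies of the pieces, since a disjoint union has no interaction between components.

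The second step is to identify the monodromy, and I expect this to be the main obstacle. I would use $G^\rs \simeq G \times^{N_G(T)} T^\rs$, so that the $W$-torsor $Y_e^\rs \to G^\rs$ restricts over $T^\rs$ to the torsor $T^\rs \times W$. The monodromy through which $W$ acts is then realized by loops in $T^\rs/W$, or equivalently by the covering $T^\rs \to T^\rs/W$. Transporting $g$ to $w g w^{-1}$ permutes the pieces indexed by $W/W'$ transitively. The stabilizer of the base piece is $W'$, and it acts on that piece through monodromy over $T^\rs/W'$ inside $G'^\rs$, which by \Cref{thm:rs} for $G'$ is the $W'$-action whose graded character is $\sf{v}^{\ell(\z)}\tau^{W'}_{\z,\sf{v}}$. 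The delicate point is checking that the $W$-action from \Cref{thm:rs} restricted along this description really is the standard induced structure. Concretely, one must verify that the fibration $\bar{Y}_\z \to G^\rs$ (via \cite[Proposition~8.2]{an_25}) pulled back to $T^\rs$ is $W$-equivariantly the induced space $W \times^{W'} \bar{Y}'_{\z}|_{T^\rs}$. To do this I would write the decomposition of the second paragraph in families over $T^\rs$, and compare it with the analogous family for $G'$ via $G \times^{N_{G'}(T)} T^\rs \to G^\rs$.

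Granting this, the trace of $w$ on $\bigoplus_x \ur{IH}^{\ast}$ is computed by the usual induced-character formula. Combining \Cref{thm:rs} for $G$ and $G'$ then gives $\sf{v}^{\ell(\z)}\tau^W_{\z,\sf{v}} = \sf{v}^{\ell(\z)}\Ind_{W'}^W \tau^{W'}_{\z,\sf{v}}$ coefficientwise in $\sf{v}$, which proves the claim. As an alternative purely algebraic check, evaluating both sides against the trivial character of parabolic subgroups recovers \eqref{eq:exotic-parabolic}, a useful consistency test for the final formula.
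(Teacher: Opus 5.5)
Your argument is correct in outline, and it takes a genuinely different route from the paper. The paper does not prove this statement itself. It cites \cite{trinh}, where the identity is one instance of a compatibility between the traces $c_\z \mapsto \tau^{W'}_{\z,\sf{v}}$ and $c_\z \mapsto \tau^{W}_{\z,\sf{v}}$ on all of $H_{W'} \subseteq H_W$. Because the functions $\z \mapsto \psi'_{\sf{v}}(c_\z)$ on $W'$ are linearly independent, the statement is equivalent to the algebraic fact that the truncated Fourier transform commutes with induction:
\[
\sum_{\psi \in \Irr(W)} \{\chi, \psi\}\, (\psi, \Ind_{W'}^W \psi')_W = \sum_{\chi' \in \Irr(W')} (\chi, \Ind_{W'}^W \chi')_W\, \{\chi', \psi'\}
\quad\text{for all $\chi \in \Irr(W)$, $\psi' \in \Irr(W')$.}
\]
This identity can also be checked with Harish-Chandra induction in Deligne--Lusztig theory.

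You instead deduce the theorem from \Cref{thm:rs} applied to $G$ and to the Levi $G'$. The key step is splitting $\bar{Y}_{\z,g}$, for $g \in T^{\mathrm{rs}}$, into $[W:W']$ open-closed copies of Levi Lusztig varieties, indexed by the type-$S'$ parabolics containing $T$. This makes the induction geometrically visible: the representation is induced because the variety is. The cost is that the result then depends on every input of \Cref{thm:rs}, namely Lusztig's character-sheaf results and Abreu--Nigro's fibration theorem, whereas the identity above is independent of that geometry. There is no circularity, since the proof of \Cref{thm:rs} does not use this theorem.

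Two details of your second step need adjusting, though neither is a real gap.

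First, $T^{\mathrm{rs}} \to T^{\mathrm{rs}}/W$ need not be a covering. Regular semisimple centralizers can be disconnected when the derived group is not simply connected; for example, $\mathrm{diag}(1,-1) \in \mathrm{PGL}_2$ is fixed by the simple reflection. Use the $W$-torsor $G/T \times T^{\mathrm{rs}} \simeq \pi_e^{-1}(G^{\mathrm{rs}}) \to G^{\mathrm{rs}}$ instead. The intersection-cohomology local system becomes trivial on it. The monodromy of $w$ is then ``conjugate by $\dot{w}$, then transport back along $T^{\mathrm{rs}}$,'' and this is independent of the path because the torsor is trivial over $T^{\mathrm{rs}}$.

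Second, $W$ does not act on $\bar{Y}_\z|_{T^{\mathrm{rs}}}$. The correct statement is $\bar{Y}_\z|_{T^{\mathrm{rs}}} \simeq N_G(T) \times^{N_{G'}(T)} \bar{Y}'_\z|_{T^{\mathrm{rs}}}$. This uses $N_G(T) \cap P_{S'} = N_{G'}(T)$, together with the fact that the connected group $T$ acts trivially on cohomology.

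The cleanest packaging is as follows. The map $\bar{Y}_\z|_{G^{\mathrm{rs}}} \to G^{\mathrm{rs}}$ factors through the connected degree-$[W:W']$ cover $(G/T \times T^{\mathrm{rs}})/W'$, whose points are pairs $(P, g)$ with $P$ of type $S'$ and $g \in P$. So the local system is pushed forward along this cover, hence induced from $W'$. Now restrict the upstairs family to $G' \cap G^{\mathrm{rs}}$. This set is dense open in $(G')^{\mathrm{rs}}$, so its fundamental group surjects onto that of $(G')^{\mathrm{rs}}$. The restriction recovers $\bar{Y}'_\z$ together with the Grothendieck $W'$-torsor of $G'$. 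This identifies the $W'$-representation with the one in \Cref{thm:rs} for $G'$.
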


Let $P = BW'B$, a parabolic subgroup of $G$.
Every parabolic of $G$ arises in this way from some choice of Borel pair $T \subseteq B \subseteq G$ and subset $S' \subseteq S$, since all Borel pairs are conjugate.

Let $G'$ be the Levi factor of $P$, viewed as a connected algebraic subgroup of $G$.
Let $\cal{B}'$ and $(G')^\unip$ be the flag variety and unipotent locus of $G'$, respectively.
For any $u \in (G')^\unip(\bb{k})$, let $\spr_{u, G} \colon W \to \bb{Z}$ and $\spr_{u, G'} \colon W' \to \bb{Z}$ be defined by
\begin{align}
\spr_{u, G}(w) &= \tr(w \mid \ur{H}^\ast(\cal{B}_u))\\
\spr_{u, G'}(w) &= \tr(w \mid \ur{H}^\ast(\cal{B}'_u)).
\end{align}
For $\bb{k}$ of characteristic zero, Alvis--Lusztig observed that
\begin{align}\label{eq:alvis-lusztig}
\Ind_{W'}^W \spr_{u, G'} = \spr_{u, G}
\end{align}
as class functions on $W$.
Lusztig later gave a proof valid for arbitrary $\bb{k}$; for $\bb{k} = \bb{C}$, Treumann gave a proof via equivariant localization and Rossman's refinement of the Springer action to an action on homotopy types.

Below, we need a refinement of \eqref{eq:alvis-lusztig} to the level of the characters $\spr_{\psi, G}$.
%As it is Treumann's proof that we will refine, we assume throughout the rest of the section that $\bb{k} = \bb{C}$.
I am grateful to Roman Bezrukavnikov for suggesting that the proof of \eqref{eq:alvis-lusztig} via equivariant localization admits such a refinement.

Parallel to our notation in \eqref{eq:spr}, let $A'_u = \pi_0(G'_u)$, and for any $\kappa' \in \Irr(A'_u)$, let $\spr_{u, \kappa'} \colon W' \to \bb{Z}$ be defined by
\begin{align}
\spr_{u, \kappa'}(w) = \sum_i \tr(w \mid \ur{H}^{2i}(\cal{B}'_u)_{\kappa'}).
\end{align}
The inclusion $G'_u \subseteq G_u$ descends to a homomorphism $A'_u \to A_u$.
The following result allows us to treat $A'_u$ as a subgroup of $A_u$.
I am grateful to Eric Sommers for help with the proof.

\begin{lem}\label[lem]{lem:inj}
The homomorphism $A'_u \to A_u$ is always injective.
\end{lem}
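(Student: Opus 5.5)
Throughout, $u$ is unipotent in the Levi subgroup $G'$, and $G'_u = G_u \cap G'$. Injectivity of $A'_u \to A_u$ says precisely that
\begin{align}
G'_u \cap G_u^\circ = (G'_u)^\circ .
\end{align}
The inclusion $\supseteq$ is clear, so the work is in $\subseteq$.

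The plan is to realize $G'$ as a centralizer of a torus. Let $Z = Z(G')^\circ$, the connected center of $G'$. Then $G' = C_G(Z)$, since a Levi subgroup is the centralizer of its connected center. Since $u \in G'$, the torus $Z$ commutes with $u$, so $Z \subseteq G_u$; because $Z$ is connected, in fact $Z \subseteq G_u^\circ$. It follows that
\begin{align}
G'_u = C_G(Z) \cap G_u = C_{G_u}(Z).
\end{align}
The question therefore becomes: for a torus $Z$ in the (possibly disconnected) group $H = G_u$, with $Z \subseteq H^\circ$, is $C_{H^\circ}(Z)$ connected and equal to $C_H(Z)^\circ$?

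Here I would use the standard fact that in a connected linear algebraic group, the centralizer of a torus is connected (Borel, Linear Algebraic Groups, 11.12). Applied to $H^\circ = G_u^\circ$, which is connected but not reductive, this gives that $C_{G_u^\circ}(Z)$ is connected. We also have
\begin{align}
C_{G_u^\circ}(Z) = G'_u \cap G_u^\circ .
\end{align}
This group is connected and contains $(G'_u)^\circ$. Since $(G'_u)^\circ$ is the identity component of $G'_u$, and $G'_u \cap G_u^\circ$ is a connected subgroup of $G'_u$ containing it, the two must be equal. That is exactly the desired equality, so the kernel of $A'_u \to A_u$ is trivial.

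The main point to check carefully is that the centralizer-of-a-torus connectedness theorem applies to the non-reductive group $G_u^\circ$. It does: the theorem holds for arbitrary connected affine groups. One should also check that the argument needs no hypothesis on the characteristic, since connectedness of torus centralizers is characteristic-free. If one prefers to avoid the non-reductive setting, an alternative is to use a Levi decomposition $G_u = R \rtimes C$ with $R$ unipotent and $C$ reductive (valid in good characteristic), choosing $C$ to contain $Z$. This reduces the problem to the reductive group $C$, where $A_u \simeq \pi_0(C)$, and the same connectedness theorem then applies to $C^\circ$. I would present the direct argument and mention this alternative only as a sanity check.
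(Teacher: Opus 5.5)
Your argument is correct, and it takes a genuinely different route from the paper. The paper first uses $G \simeq (\tilde{H} \times \tilde{T})/Z$ to reduce to almost-simple $G$. It then applies Sommers's generalized Bala--Carter theorem: $u$ is distinguished in a pseudo-Levi subgroup of $G'$, which is still a pseudo-Levi subgroup of $G$, and Sommers's result shows that $A'_u \to A_u$ is injective even on conjugacy classes. You instead observe that $G' = C_G(Z)$ with $Z = Z(G')^\circ \subseteq G_u^\circ$. The kernel $(G'_u \cap G_u^\circ)/(G'_u)^\circ$ is then $C_{G_u^\circ}(Z)/(G'_u)^\circ$, which is trivial because the centralizer of a torus in a connected, not necessarily reductive, group is connected.

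Your route has several advantages:
\begin{itemize}
\item It is uniform and classification-free, so it needs no reduction to the almost-simple case. This answers the question raised in the remark right after the lemma.
\item It uses neither the characteristic nor the unipotence of $u$, so it works verbatim for any $x \in G'$.
\item More generally, it shows that $\pi_0(C_H(S)) \to \pi_0(H)$ is injective for any algebraic group $H$ and any torus $S \subseteq H^\circ$.
\end{itemize}

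The paper's route yields the stronger fact that distinct conjugacy classes of $A'_u$ do not fuse in $A_u$. However, only injectivity is used later, to regard $A'_u$ as a subgroup of $A_u$ when forming $\Ind_{A'_u}^{A_u}$ in \Cref{thm:ind}, so nothing is lost.

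You can drop the alternative via a Levi decomposition of $G_u$. It is unnecessary, and in positive characteristic it would itself need nontrivial input.
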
 

\begin{proof}
As explained in \cite[\S{1.8}]{carter}, we can always write $G \simeq (\tilde{H} \times \tilde{T})/Z$, where $\tilde{H}$ is a direct product of almost-simple algebraic groups, $\tilde{T}$ is a torus, and $Z$ is a finite central algebraic subgroup of $\tilde{H} \times \tilde{T}$.
The preimage of $G'$ in $\tilde{H} \times \tilde{T}$ then takes the form $\tilde{H}' \times \tilde{T}$, where $\tilde{H}'$ is a Levi of $\tilde{H}$.
Since $u$ is unipotent, $u$ must belong to the image of $\tilde{H}'(\bb{k})$ in $G'(\bb{k})$.
So it suffices to prove the lemma with $G$ and $G'$ replaced by $\tilde{H}$ and $\tilde{H}'$.
Then, by multiplicativity, it suffices to prove the lemma in the cases where $G$ is almost-simple.

The latter cases can be handled by Sommer's refined Bala--Carter theorem~\cite{sommers}.
In his terminology, we can assume that $u$ is a distinguished unipotent element in some pseudo-Levi subgroup $G''$ of $G'$.
Then $G''$ remains a pseudo-Levi subgroup of $G$.
So the main result of~\cite{sommers} implies that the homomorphism $A'_u \to A_u$ embeds the set of conjugacy classes of $A'_u$ into the set of conjugacy classes of $A_u$.
\end{proof}

\begin{rem} 
Surprisingly, we were unable to find \Cref{lem:inj} stated in the literature.
It would be interesting to have a proof that avoids reduction to the case where $G$ is almost-simple.
\end{rem} 

\begin{thm}\label[thm]{thm:ind}
Suppose that $\bb{k} = \bb{C}$.
Then for any $u \in (G')^\unip(\bb{C})$ and $\kappa' \in \Irr(A'_u)$, we have
\begin{align}
\Ind_{W'}^W \spr_{u, \kappa'}
=
\sum_{\kappa \in \Irr(A_u)}
	(\kappa, \Ind_{A'_u}^{A_u} \kappa')_{A_u} \spr_{u, \kappa},
\end{align}
where $(-, -)_{A_u}$ is the usual scalar product of class functions.
\end{thm}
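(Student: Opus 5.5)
We refine the equivariant-localization proof of \eqref{eq:alvis-lusztig} by keeping track of the action of $A_u$ throughout. The statement is equivalent to a $W \times A_u$-equivariant identity of virtual (ungraded) characters:
\begin{align}
\ur{H}^\ast(\cal{B}_u) \simeq \Ind_{W' \times A'_u}^{W \times A_u} \ur{H}^\ast(\cal{B}'_u).
\end{align}
Indeed, decomposing $\ur{H}^\ast(\cal{B}'_u) = \bigoplus_{\kappa'} \ur{H}^\ast(\cal{B}'_u)_{\kappa'} \otimes \kappa'$ and inducing factorwise gives $\sum_{\kappa'} \Ind_{W'}^W \spr_{u,\kappa'} \otimes \Ind_{A'_u}^{A_u}\kappa'$. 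Taking the $\kappa$-isotypic part on both sides, and using Frobenius reciprocity together with the linear independence of the $\kappa'$, yields the theorem. This reformulation uses \Cref{lem:inj}, so that $A'_u$ is a subgroup of $A_u$ and the induction makes sense.

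To prove the displayed identity, choose a maximal torus $S$ of $Z(G')^\circ$, so that $G' = Z_G(S)$ and $S \subseteq G_u$, since $S$ centralizes $u \in G'$. By equivariant localization for the $S$-action, the total cohomology of $\cal{B}_u$ equals that of the fixed locus $\cal{B}_u^S$, at least at the level of characters; evenness and purity make the Euler characteristic argument lossless. The fixed locus is $\cal{B}_u^S = \bigsqcup_{x \in W'\backslash W} \cal{B}'_u \cdot \dot{x}$, a disjoint union of copies of $\cal{B}'_u$ indexed by cosets. Following Treumann and Rossmann, the Springer action of $W$ on $\ur{H}^\ast(\cal{B}_u)$ should then be identified with the induced action, with $W$ permuting the components and $W'$ acting on each component through its own Springer action.

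For the $A_u$-equivariance, note that $G'_u$ preserves each component, and the action of $G'_u$ on a component factors through $A'_u$ on its cohomology. The key step is to enlarge $G'_u$ to $N_{G_u}(S)$, or more precisely to observe that the subgroup of $G_u$ normalizing $S$ meets every component of $G_u$. This should follow because all maximal tori of $Z_{G_u}(\ldots)$ are conjugate under $G_u^\circ$. The group $N_{G_u}(S)$ then permutes the fixed-point components, and the stabilizer of the base component has image $A'_u$ in $A_u$. Combining these pieces, the $A_u$-action on $\ur{H}^\ast(\cal{B}_u^S)$ is induced from $A'_u$, compatibly with the $W$-induction.

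The main obstacle is making the combined $W \times A_u$-structure on the fixed locus genuinely an induced module from $W' \times A'_u$ rather than two separate inductions. Equivalently, we must check that $\cal{B}_u^S$ is a $(W \times \pi_0 N_{G_u}(S))$-set of components, transitive with stabilizer $W' \times A'_u$ acting compatibly. I would first try to verify this via Rossmann's geometric realization of the Springer action, in which $W$ acts on a homotopy model commuting with $G_u$. Failing that, I would compare traces of $(w,a)$ pointwise, using localization for the semisimple part of a lift of $a$ combined with $S$.
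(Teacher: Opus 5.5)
\textbf{The gap is in your reformulation.} The identity
\[
\ur{H}^\ast(\cal{B}_u) \simeq \Ind_{W' \times A'_u}^{W \times A_u} \ur{H}^\ast(\cal{B}'_u)
\]
is false whenever $A'_u \neq A_u$. The right side has dimension $[W:W'][A_u:A'_u]\dim \ur{H}^\ast(\cal{B}'_u)$. The left side has dimension $[W:W']\dim \ur{H}^\ast(\cal{B}'_u)$. This follows from your own description of $\cal{B}_u^S$ as $[W:W']$ copies of $\cal{B}'_u$, or from \eqref{eq:alvis-lusztig} at $w = e$.

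For a concrete case, take $G = \mathrm{SO}_5$, $G' = \mathrm{GL}_1 \times \mathrm{SO}_3$, and $u$ regular unipotent in $G'$, which has Jordan type $311$ in $G$. Then $\cal{B}'_u$ is a point, $A'_u = 1$ and $A_u \simeq \bb{Z}/2$. Your identity would give $\spr_{u, 1_{A_u}} = \spr_{u, \varepsilon} = \Ind_{W'}^W \spr_{u, 1_{A'_u}}$, with $\varepsilon$ the nontrivial character of $A_u$. That forces $\dim \ur{H}^\ast(\cal{B}_u) = 8$ instead of $4$.

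The claimed equivalence with the theorem also fails. Taking $\kappa$-isotypic parts of your identity gives the transposed relation
\[
\spr_{u,\kappa} = \sum_{\kappa'} (\kappa, \Ind_{A'_u}^{A_u}\kappa')_{A_u} \Ind_{W'}^W \spr_{u,\kappa'}.
\]
This cannot be inverted to the theorem's relation unless $A'_u = A_u$, since $\Res_{A'_u}^{A_u}\Ind_{A'_u}^{A_u}$ is not the identity. Consequently, the step you flag as the main obstacle cannot succeed: you wanted the components of $\cal{B}_u^S$ to form a transitive $(W \times \pi_0 N_{G_u}(S))$-set with stabilizer $W' \times A'_u$. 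In any case $W$ does not act on $\cal{B}_u^S$, and there are only $[W:W']$ components.

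\textbf{The fix is to restrict on the $A$-side rather than induce.} The theorem is equivalent to the $W \times A'_u$-equivariant identity of characters
\[
\Res_{A'_u}^{A_u} \ur{H}^\ast(\cal{B}_u) \simeq \Ind_{W'}^W \ur{H}^\ast(\cal{B}'_u).
\]
Taking $\kappa'$-multiplicity spaces, the left side gives $\sum_\kappa (\Res_{A'_u}^{A_u}\kappa, \kappa')_{A'_u}\spr_{u,\kappa}$ and the right side gives $\Ind_{W'}^W \spr_{u,\kappa'}$. Frobenius reciprocity then turns this into the statement.

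This is what the paper proves, and it needs much less than you tried to establish. The torus used for localization commutes with $G'$, hence with $G'_u$. The paper uses a $\ur{U}(1)$ whose centralizer has neutral component $G'$; your $S$ works equally well. So Treumann's $W$-equivariant localization isomorphism is automatically $G'_u$-equivariant, hence $A'_u$-equivariant, and the spectral-sequence degeneration respects $\kappa'$-isotypic parts. You do not need to enlarge $G'_u$ to $N_{G_u}(S)$, nor know how elements of $A_u$ outside $A'_u$ act on the fixed locus.

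The rest of your outline matches the paper's route: localizing to $\cal{B}_u^S$, and identifying the Springer action there with the induced one via Treumann and Rossmann.
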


\begin{ex} 
Recall that the regular representation of a finite group, like $A'_u$, remains a regular representation when induced to a larger finite group, like $A_u$.
From this fact and the linearity of induction, we see that \Cref{thm:ind} implies \eqref{eq:alvis-lusztig}.
\end{ex} 

\begin{ex} 
Take $G$ and $G'$ of types $B_3$ and $B_2$, respectively.
The embedding of $G'$ into $G$ is unique up to conjugacy.
The unipotent classes of $G$, \emph{resp.}\ $G'$, are labeled by certain integer partitions of $7$, \emph{resp.} $5$ in \cite{carter} or CHEVIE.
The irreducible characters of $W$, \emph{resp.}\ $W'$, are labeled by the integer bipartitions of $3$, \emph{resp.}\ $2$.

Let $u$ belong to the unipotent conjugacy class in $G'$ labeled $311$.
Its conjugacy class in $G$ is labeled $31111$.
The groups $A_u$, $A'_u$ are both cyclic of order $2$.
Writing $\varepsilon$, $\varepsilon'$ for their respective nontrivial characters, we have the following decompositions of the characters $\spr_{u, \kappa}$, $\spr_{u, \kappa'}$:
\begin{align}
\tiny\begin{array}[t]{r|ll}
	&\spr_{u, 1_{A_u}}
	&\spr_{u, \varepsilon}\\
	\hline 
3.
	&1
	&\\
21.
	&1
	&1\\
2.1
	&2
	&\\
.3
	&
	&\\
1.2
	&1
	&\\
11.1
	&1
	&1\\
111.
	&
	&1\\
1.11
	&1
	&\\
.21
	&
	&\\
.111
	&
	&
\end{array}
\qquad 
\begin{array}[t]{r|ll}
	&\spr_{u, 1_{A'_u}}
	&\spr_{u, \varepsilon'}\\
	\hline 
2.
	&1
	&\\
11.
	&
	&1\\
1.1
	&1
	&\\
.2
	&
	&\\
.11
	&
	&\\
\end{array}
\end{align}
The map $A'_u \to A_u$ is an isomorphism.
So here, \Cref{thm:ind} predicts that \begin{align} 
\Ind_{W'}^W \spr_{u, 1_{A'_u}} 
	= \spr_{u, 1_{A_u}}
\quad\text{and}\quad 
\Ind_{W'}^W \spr_{u, \varepsilon'} 
	= \spr_{u, \varepsilon}.
\end{align}
Indeed, the above identities are in agreement with the following table of the scalar products $(\chi, \Ind_{W'}^W \chi')_W$ for $\chi \in \Irr(W)$ and $\chi' \in \Irr(W')$:
\begin{align}
\tiny\begin{array}[t]{r|lll}
	&2.
	&11.
	&1.1\\
		\hline 
3.
	&1
	&
	&\\
21.
	&1
	&1
	&\\
2.1
	&1
	&
	&1\\
.3
	&
	&
	&\\
1.2
	&
	&
	&1\\
11.1
	&
	&1
	&1\\
111.
	&
	&1
	&\\
1.11
	&
	&
	&1\\
.21
	&
	&
	&\\
.111
	&
	&
	&
\end{array}
\end{align}
\end{ex} 

\begin{proof}[Proof of \Cref{thm:ind}]
We will merely sketch what adjustments are needed to the proof of Theorem 1.1 in \cite[15--16]{treumann}.

Note that Treumann's $L$, $\nu$, $(-)^\ur{rss}$ are our $G'$, $u$, $(-)^\rs$.
In particular, his $W_G$, $W_L$ are our $W$, $W'$.
Also, he works with the topological spaces formed by the $\bb{C}$-points of his algebraic varieties, in the analytic topology, rather than the varieties themselves in the \'etale topology.
Via the comparison theorem of~\cite[Expos\'e~XVI]{agvdd}, his conclusions for singular cohomology can be transported to \'etale cohomology, once we extend scalars from $\bb{Q}$ to $\QL$.

As in~\cite{treumann}, let $\ur{U}(1)$ be the compact real form of $\GL_1$, and fix an embedding of $\ur{U}(1)$ into $G$ such that the neutral component of its centralizer is precisely $G'$.
The main structure in the proof is a commutative square of topological spaces, which in our notation becomes:
\begin{equation}
\begin{tikzcd}
E_{u, G'}^\rs \arrow{r} \arrow{d}[left]{\chi'}
	&E_{u, G}^\rs \arrow{d}{\chi}\\
B(\delta, T \sslash W')^\rs \arrow{r}{\iota} 
	&B(\delta, T \sslash W)^\rs
\end{tikzcd}
\end{equation}
Above:
The notation $B(\delta, -)$ denotes an open ball of radius $\delta > 0$ around a chosen point.
The bottom horizontal arrow is a fiber bundle with fiber $W/W'$.
The vertical arrows are Ehresmann fibrations: that is, smooth, proper, surjective maps.
The fibers of $\chi'$, \emph{resp.}\ $\chi$, are homotopy equivalent to $\cal{B}_u$, \emph{resp.}\ $\cal{B}'_u$.
There is a $\ur{U}(1)$-action on $E_{u, G}^\rs$, along which $\chi$ is invariant.
The top horizontal arrow is an embedding that identifies $E_{u, G'}^\rs$ with the set of $\ur{U}(1)$-fixed points of $E_{u, G}$.
%The notation $(-)_{h\ur{U}(1)}$ denotes the Borel construction applied to a space with a $\ur{U}(1)$-action.

Let $\underline{\bb{Q}}_{(-), \ur{U}(1)}$ denote the $\ur{U}(1)$-equivariant constant sheaf with $\bb{Q}$-coefficients over a $\ur{U}(1)$-space.
The monodromy action of the fundamental group of $B(\delta, T \sslash W)^\rs$, \emph{resp.}\ $B(\delta, T \sslash W')^\rs$, on the stalks of $\underline{\bb{Q}}_{E_{u, G}^\rs, \ur{U}(1)}$, \emph{resp.}\ $\underline{\bb{Q}}_{E_{u, G'}^\rs, \ur{U}(1)}$, factors through $W$, \emph{resp.}\ $W'$.
This induces the Springer action on $\ur{H}_{\ur{U}(1)}^\ast(\cal{B}_u, \bb{Q})$, \emph{resp.}\ $\ur{H}_{\ur{U}(1)}^\ast(\cal{B}'_u, \bb{Q})$, with the same sign convention as in \Cref{sec:unipotent}.

As in~\cite{treumann}, we identify $\ur{H}_{\ur{U}(1)}^\ast(\point, \bb{Q})$ with $\bb{Q}[t]$, where $\deg t = 2$.
The main input in Treumann's proof is the localization theorem for $\ur{U}(1)$-equivariant cohomology, showing that in the $\ur{U}(1)$-equivariant derived category of sheaves in the analytic topology on $B(\delta, T \sslash W)^\rs$, we have a $W$-equivariant isomorphism:
\begin{align}\label{eq:equivariant-loc}
\iota_\ast \chi'_\ast \underline{\bb{Q}}_{E_{u, G'}^\rs, \ur{U}(1)} \otimes_{\bb{Q}[t]} \bb{Q}[t^{\pm 1}]	&\simeq 
	\chi_\ast \underline{\bb{Q}}_{E_{u, G}^\rs, \ur{U}(1)} \otimes_{\bb{Q}[t]} \bb{Q}[t^{\pm 1}].
\end{align}
This induces a $W$-equivariant isomorphism:
\begin{align}\label{eq:equivariant-loc-cohomology}
\Ind_{W'}^W \ur{H}_{\ur{U}(1)}^\ast(\cal{B}'_u, \bb{Q}) \otimes_{\bb{Q}[t]} \bb{Q}[t^{\pm 1}]	
	&\simeq 
	\ur{H}_{\ur{U}(1)}^\ast(\cal{B}_u, \bb{Q}) \otimes_{\bb{Q}[t]} \bb{Q}[t^{\pm 1}].
\end{align}
The remainder of Treumann's proof amounts to showing the degeneration of a spectral sequence, which in turn shows how ordinary cohomology can be recovered from equivariant cohomology after collapsing cohomological degree.

Since the copy of $\ur{U}(1)$ in $G$ commutes with $G'$ by construction, we see that the $G'_u$-actions on $\cal{B}_u$ and $\cal{B}'_u$ induce $A'_u$-actions on $\ur{H}_{\ur{U}(1)}^\ast(\cal{B}_u, \bb{Q})$ and $\ur{H}_{\ur{U}(1)}^\ast(\cal{B}'_u, \bb{Q})$.
These in turn induce $A'_u$-actions on $\underline{\bb{Q}}_{E_{u, G}^\rs, \ur{U}(1)}$ and $\underline{\bb{Q}}_{E_{u, G'}^\rs, \ur{U}(1)}$.
Since $E_{u, G'}^\rs$ is just the $\ur{U}(1)$-fixed locus of $E_{u, G}$, the isomorphism \eqref{eq:equivariant-loc} is equivariant with respect to these actions.
So \eqref{eq:equivariant-loc-cohomology} is also $A'_u$-equivariant.
So for any $\kappa' \in \Irr(A'_u)$, we have
\begin{align}
\Ind_{W'}^W \ur{H}_{\ur{U}(1)}^\ast(\cal{B}'_u, \bb{Q})_{\kappa'} \otimes_{\bb{Q}[t]} \bb{Q}[t^{\pm 1}]	
	&\simeq 
	\ur{H}_{\ur{U}(1)}^\ast(\cal{B}_u, \bb{Q})_{\kappa'} \otimes_{\bb{Q}[t]} \bb{Q}[t^{\pm 1}],
\end{align}
where, on the right-hand side, we implicitly pull back $\ur{H}_{\ur{U}(1)}^\ast(\cal{B}_u, \bb{Q})$ to a representation of $A'_u$.
The spectral-sequence degeneration is compatible with this refinement to $\kappa'$-multiplicity spaces.
Altogether, 
\begin{align}
\Ind_{W'}^W \spr_{u, \kappa'}
	=
	\sum_{\kappa \in \Irr(A_u)}
	(\Res_{A'_u}^{A_u} \kappa, \kappa')_{A'_u} \spr_{u, \kappa},
\end{align}
which is equivalent to the theorem statement by Frobenius reciprocity.
\end{proof}

In what follows, let $\Upsilon_G$ be the set of $G$-conjugacy classes of pairs $(u, \kappa)$ such that $u \in G^\unip(\bb{k})$ and $\kappa \in \Irr(A_u)$.
Let $\Upsilon_{G'}$ be defined similarly, with $G$ replaced by $G'$.
For fixed $u \in G^\unip(\bb{k})$, let $\Upsilon_{G', u} \subseteq \Upsilon_{G'}$ be the subset of classes $[u', \kappa']$ such that $u'$ becomes conjugate to $u$ in $G$.

When $[u, \kappa] \in \Upsilon_G$ is the image of $\psi \in \Irr(W)$ under the Springer correspondence, we set $\alpha_{u, \kappa} = \alpha_{\psi, G}$.
For $[u', \kappa'] \in \Upsilon_{G'}$, we define $\alpha_{u', \kappa'}$ similarly.
With this notation, we can state a refinement of \Cref{prop:parabolic}.

\begin{cor}\label[cor]{cor:parabolic}
Suppose that $\bb{k} = \bb{C}$.
%As above, let $W' \subseteq W$ be a parabolic subgroup and $G' \subseteq G$ the corresponding Levi relative to a fixed Borel pair of $G$.
If $\z \in W'$ in the setup above, then
\begin{align}
\alpha_{u, \kappa}^\z 
	= \sum_{[u', \kappa'] \in \Upsilon_{G', u}} 
		(\kappa, \Ind_{A'_{u'}}^{A_u} \kappa')_{A_u}
		\alpha_{u', \kappa'}^\z
		\quad\text{for all $[u, \kappa] \in \Upsilon_G$}.
\end{align}
In particular, the $\alpha_{\psi, G}^\z$ for $\psi \in \Irr(W)$ are nonnegative integer sums of the $\alpha_{\psi', G'}^\z$ for $\psi' \in \Irr(W')$.
\end{cor}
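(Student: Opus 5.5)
We compare two expansions of $\tau_{\z, \sf{v}}^W$ in the basis $\{\spr_{\psi, G}\}$ and use uniqueness; the linear independence of these functions follows from the unipotence of $\psi \mapsto \spr_{\psi, G}$.

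First, by \eqref{eq:main} for $G'$, we have $\tau_{\z, \sf{v}}^{W'} = \sum_{[u', \kappa'] \in \Upsilon_{G'}} \alpha_{u', \kappa'}^\z \spr_{u', \kappa'}$. Here we use the Springer correspondence for $G'$ to index by $\Upsilon_{G'}$. Pairs $[u', \kappa']$ outside the image contribute nothing, since their $\spr$ functions vanish identically. Applying $\Ind_{W'}^W$ and \Cref{thm:tau-ind} gives
\begin{align}
\tau_{\z, \sf{v}}^W = \sum_{[u', \kappa']} \alpha_{u', \kappa'}^\z \Ind_{W'}^W \spr_{u', \kappa'}.
\end{align}
For each $u'$, let $u$ be its $G$-conjugate representative, so that $\cal{B}'_{u'}$ and $A'_{u'} \hookrightarrow A_u$ are as in \Cref{lem:inj}. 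Then \Cref{thm:ind} rewrites $\Ind_{W'}^W \spr_{u', \kappa'}$ as $\sum_{\kappa} (\kappa, \Ind_{A'_{u'}}^{A_u}\kappa')_{A_u} \spr_{u, \kappa}$.

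Next, we regroup by $[u, \kappa] \in \Upsilon_G$. The coefficient of $\spr_{u, \kappa}$ is then exactly the claimed sum over $\Upsilon_{G', u}$. Pairs $(u,\kappa)$ not in the image of the Springer correspondence have $\spr_{u, \kappa} = 0$, because the top-degree multiplicity space vanishes and hence, by purity and the Springer-theoretic description, the whole $\kappa$-isotypic part vanishes. So only Springer pairs survive. Comparing with \eqref{eq:main} for $G$ and invoking uniqueness of the expansion gives the displayed formula. The coefficients $(\kappa, \Ind \kappa')$ are nonnegative integers, which yields the final sentence.

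The main subtlety is bookkeeping. We must check that distinct $G'$-classes $[u', \kappa']$ mapping to the same $G$-class are each counted correctly: for fixed $u$, the term should be independent of the choice of conjugating element, up to conjugation inside $A_u$, which does not change the scalar product. We must also check that the identification of $A'_{u'}$ as a subgroup of $A_u$ is well defined up to conjugacy. Finally, we must confirm that non-Springer pairs $(u, \kappa)$ have $\spr_{u,\kappa} = 0$. If this is not directly available, the fallback is to note that any such terms must cancel by uniqueness, provided the $\spr_{u,\kappa}$ attached to Springer pairs remain linearly independent.
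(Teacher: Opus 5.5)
Your proposal is correct and is essentially the paper's proof. Both arguments expand $\tau_{\z,\sf{v}}^{W'}$ in the $\spr_{u',\kappa'}$, induce using \Cref{thm:tau-ind}, rewrite each $\Ind_{W'}^W \spr_{u',\kappa'}$ by \Cref{thm:ind}, regroup over $\Upsilon_G$, and conclude by uniqueness of the expansion in the basis $\{\spr_{\psi,G}\}_\psi$.

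Your extra bookkeeping about non-Springer pairs goes beyond what the paper writes, with two caveats. First, the vanishing of $\spr_{u,\kappa}$ for such pairs comes from Lusztig's block-orthogonality for the generalized Springer correspondence (only principal-series local systems occur in the stalks of $\cal{S}$), not from purity. Second, your fallback would not work on its own: nonzero non-Springer terms would re-expand in the Springer basis rather than cancel.
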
 

\begin{proof}
We have
\begin{align}
\tau_{\z, \sf{v}}^W
	&= \Ind_{W'}^W \tau_{\z, \sf{v}}^{W'}
		&&\text{by \Cref{thm:tau-ind}}\\
	&= \sum_{[u', \kappa'] \in \Upsilon_{G'}} 
		\alpha_{u', \kappa'}^\z \Ind_{W'}^W \spr_{u', \kappa'}\\
	&= \sum_{[u', \kappa'] \in \Upsilon_{G'}} 
		\alpha_{u', \kappa'}^\z \sum_{\kappa \in \Irr(A_{u'})} (\kappa, \Ind_{A'_{u'}}^{A_u} \kappa')_{A_u} \spr_{u, \kappa} 
		&&\text{by \Cref{thm:ind}}\\
	&= \sum_{[u] \in G^\unip/G} \sum_{[u', \kappa'] \in \Upsilon_{G', u}} \alpha_{u', \kappa'}^\z \sum_{\kappa \in \Irr(A_u)} (\kappa, \Ind_{A'_{u'}}^{A_u} \kappa')_{A_u} \spr_{u, \kappa}\\
	&= \sum_{[u, \kappa] \in \Upsilon_G} \left( 
		\sum_{[u', \kappa'] \in \Upsilon_{G', u}} 
		(\kappa, \Ind_{A'_{u'}}^{A_u} \kappa')_{A_u}
		\alpha_{u', \kappa'}^\z
		\right) \spr_{u, \kappa}.
\end{align}
Since the decomposition of $\tau_{\z, \sf{v}}^W$ into the characters $\spr_{u, \kappa}$ is unique, we deduce the desired claim.
\end{proof}

\section{The Cell Order}\label[section]{sec:cells}

In this section, we first review two-sided cells following \cite{kl} and \cite[Chapter~5]{lusztig_84}, then prove \Cref{prop:cells}, \Cref{cor:cells}, and some other facts.

Recall that $H_W$ denotes the Hecke algebra of $W$.
For all $x, y \in W$, write $y \xrightarrow{L} x$, \emph{resp.}\ $y \xrightarrow{R} x$, to mean that there exists some $\alpha \in H_W$ such that $c_y$ has nonzero coefficient when $\alpha c_x$, \emph{resp.}\ $c_x \alpha$, is expanded in the Kazhdan--Lusztig basis.
Write $y \xrightarrow{LR} x$ to mean that either $y \xrightarrow{L} x$ or $y \xrightarrow{R} x$.
The relation $\xrightarrow{LR}$ is reflexive, so its transitive closure $\leq_{LR}$ is a partial order on $W$.
The identity $e$ is the unique topmost element in $\leq_{LR}$, while the longest element $w_\circ$ is the bottommost.

Write $x \sim_{LR} y$ to mean that both $y \leq_{LR} x$ and $x \leq_{LR} y$.
Then $\sim_{LR}$ is an equivalence relation on $W$.
The \dfemph{two-sided cells} of $W$ are the equivalence classes under $\sim_{LR}$.
The partial order $\leq_{LR}$ descends to a partial order $\leq$ on the set of two-sided cells $\sf{C}_W$.
The singleton $\{e\}$ is the unique topmost two-sided cell, while the singleton $\{w_\circ\}$ is the bottommost.

For any two-sided cell $\cal{F} \in \sf{C}_W$, let $M_{\leq \cal{F}}$, \emph{resp.}\ $M_{< \cal{F}}$, be the $\bb{Z}[\sf{v}^{\pm 1}]$-submodule of $H_W$ spanned by the elements $c_w$ with $w \in \cal{F}'$ for some $\cal{F}' \leq \cal{F}$, \emph{resp.}\ $\cal{F}' < \cal{F}$.
The definition of $\leq_{LR}$ shows that $M_{\leq \cal{F}}$ and $M_{< \cal{F}}$ are, in fact, two-sided ideals of $H_W$.
We define the \dfemph{two-sided cell module} associated with $\cal{F}$ to be the left $H_W$-module formed by $M_\cal{F} \vcentcolon= M_{\leq \cal{F}}/M_{< \cal{F}}$.

Each simple module over $\bb{Q}(\sf{v}) \otimes H_W$ occurs in the two-sided cell module $M_\cal{F}$ for a unique $\cal{F} \in \sf{C}_W$.
As any such module has a character of the form $\chi_\sf{v}$ for some $\chi \in \Irr(W)$, we obtain a map $\Irr(W) \to \sf{C}_W$, which we will denote by $\chi \mapsto \cal{F}(\chi)$.
By work of Barbasch--Vogan~\cite[\S{5.15}]{lusztig_84}, the pairing $\{-, -\}$ reviewed in \Cref{sec:dl} is block-diagonal with respect to the fibers of this map.
That is:

\begin{lem}[Barbasch--Vogan]\label[lem]{lem:fourier-blocks}
If $\cal{F}(\chi) \neq \cal{F}(\psi)$, then $\{\chi, \psi\} = 0$.
\end{lem}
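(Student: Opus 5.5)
The lemma is attributed to Barbasch--Vogan and cited from \cite[\S{5.15}]{lusztig_84}. The plan is to reduce it to the structure of Lusztig's nonabelian Fourier transform, as reviewed in \Cref{sec:dl}, rather than to reprove anything about cells from scratch.

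\textbf{Step 1: reduce to Lusztig's families.} Recall from \Cref{sec:dl} that $\{-,-\}$ is defined family by family. Lusztig partitions $\Irr(W)$ into families, attaches to each family $\cal{F}'$ a finite group $\Gamma_{\cal{F}'}$, and embeds $\cal{F}'$ into the set $\cal{M}(\Gamma_{\cal{F}'})$ of pairs $(x,\sigma)$ up to conjugacy. The pairing $\{\chi,\psi\}$ is then the Fourier matrix entry $\{(x,\sigma),(y,\tau)\}$ when $\chi,\psi$ lie in the same family, and $0$ otherwise. So by construction, $\{\chi,\psi\}\neq 0$ forces $\chi$ and $\psi$ into a common family. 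The whole lemma therefore reduces to one claim: $\chi$ and $\psi$ lie in the same Lusztig family if and only if $\cal{F}(\chi)=\cal{F}(\psi)$.

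\textbf{Step 2: families versus two-sided cells.} This identification is the real content, and it is the step that carries the name Barbasch--Vogan. Lusztig proves in \cite[Chapter~5]{lusztig_84}, using \cite[\S{5.15}]{lusztig_84}, that for Weyl groups the partition of $\Irr(W)$ into families coincides with its partition according to the two-sided cell $\cal{F}(\chi)$ whose cell module contains $\chi_\sf{v}$.

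The argument goes through the $a$-function and truncated induction. One shows that both partitions are generated by the same moves:
\begin{itemize}
\item $j$-induction from parabolic subgroups together with tensoring by the sign character, which generate families, and
\item Barbasch--Vogan's description of cell representations via primitive ideals, namely Goldie rank representations, which are built from special representations by the same induction procedure.
\end{itemize}
I would simply cite this identification rather than rederive it. Given it, Step 1 finishes the proof.

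\textbf{Main obstacle.} The obstacle is purely bibliographic. One must make sure that the map $\chi\mapsto\cal{F}(\chi)$ defined here, using the $\bb{Q}(\sf{v})$-modules occurring in $M_{\cal{F}}$, agrees with Lusztig's convention for attaching characters to two-sided cells. Lusztig uses specialization at $\sf{v}=1$ of the cell modules and possibly tensoring with sign.

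To settle this, I would check two things:
\begin{itemize}
\item Semisimplicity of $\bb{Q}(\sf{v})\otimes H_W$ and Tits deformation identify simple constituents before and after specialization.
\item Our normalization $c_w=C'_w$, whose top cell is $\{e\}$, matches the ordering in \cite{kl}. Any sign twist would only permute families by $\chi\mapsto\chi\otimes\varepsilon$, and the pairing is compatible with that permutation, so block-diagonality is unaffected.
\end{itemize}
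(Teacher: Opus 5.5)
Your route is the paper's: the paper gives no argument beyond citing Barbasch--Vogan via \cite[\S{5.15}]{lusztig_84}. Your Steps 1--2 unpack that citation into (a) block-diagonality of the pairing with respect to Lusztig's families and (b) the coincidence of families with the fibers of $\chi \mapsto \mathcal{F}(\chi)$.

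However, Step 1 is misjustified. In this paper $\{-,-\}$ is \emph{not} defined family by family. \Cref{sec:dl} defines $\{\chi,\psi\} = (\rho_\chi, R_\psi)_{G^F}$, the multiplicity of a principal-series unipotent character in an almost-character. Two facts about this multiplicity are needed:
\begin{itemize}
\item it equals the Fourier-matrix entry on $\mathcal{M}(\Gamma_{\mathcal{F}'})$ for the pair attached to $\chi$ and $\psi$;
\item in particular, it vanishes when $\chi$ and $\psi$ lie in different families, these being defined combinatorially via truncated induction and $\otimes\varepsilon$ in \cite[Chapter~4]{lusztig_84}.
\end{itemize}
Both are the content of Lusztig's main theorem \cite[Theorem~4.23]{lusztig_84}. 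That is a substantial theorem, not something true by construction. You must cite it alongside the families-versus-cells identification; with that citation in place, your argument is complete.

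Two smaller points. First, your sketch of Step 2 is muddled. The Goldie rank representations are the special representations themselves. What truncated induction and $\otimes\varepsilon$ generate are Lusztig's constructible representations, which the left cell modules turn out to be. Since you cite the result anyway, either state it precisely or drop the sketch.

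Second, in your convention check, a possible sign twist is harmless simply because $\chi \mapsto \chi \otimes \varepsilon$ carries each family onto a family. So the partition of $\mathrm{Irr}(W)$ is unchanged, and no compatibility of the pairing with $\otimes\varepsilon$ is needed.
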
 

The sets $\Irr(W)_\cal{F} = \{\chi \in \Irr(W) \mid \cal{F}(\chi) = \cal{F}\}$ may be called the \dfemph{families} of the irreducible characters of $W$.
Lusztig observed in~\cite{lusztig_97} that under the Springer correspondence, the partition of $\Irr(W)$ into families $\Irr(W)_\cal{F}$
% \vcentcolon= \{\chi \in \cal{F} \mid \cal{F}(\chi) = \cal{F}\}$
corresponds to a partition of $G^\unip$ into subvarieties $G_\cal{F}^\unip$, now called the \dfemph{pieces} of $G^\unip$.
In particular, if $\prec_\unip$, \emph{resp.}\ $\prec_{LR}$, denotes the partial order on $\Irr(W)$ obtained by pulling back the reverse of the closure order on the unipotent conjugacy classes of $G$, \emph{resp.}\ pulling back the partial order on two-sided cells of $W$, then:

\begin{lem}[Lusztig]\label[lem]{lem:partial-order}
$\prec_\unip$ is a refinement of $\prec_{LR}$.
\end{lem}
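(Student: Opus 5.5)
The statement is a compatibility between two partial orders that are pulled back to $\Irr(W)$. The first thing I will do is fix its precise content. Conventions must make the top cell $\{e\}$, which contains the family of $1_W$, match the regular unipotent class, which is the Springer image of $1_W$. Likewise the bottom cell $\{w_\circ\}$, the family of the sign character $\varepsilon$, must match the trivial class. With that normalization I will prove: if $\chi \prec_{LR} \psi$, meaning $\cal{F}(\chi) < \cal{F}(\psi)$, then $\chi \prec_\unip \psi$ in the sense of the closure order reversed as above, so that every relation of $\prec_{LR}$ is also a relation of $\prec_\unip$. The argument is not intrinsic to this paper. It is a reduction to three known results of Lusztig and Spaltenstein, together with the Barbasch--Vogan input already behind \Cref{lem:fourier-blocks}.

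The first step uses special characters. Each family $\Irr(W)_\cal{F}$ contains a unique special character $\psi_\cal{F}$, which is the member of $\Irr(W)_\cal{F}$ of minimal $b$-invariant. Its Springer image is the trivial local system on a special unipotent class $C_\cal{F}$. The map $\cal{F} \mapsto C_\cal{F}$ is a bijection from $\sf{C}_W$ onto the special unipotent classes. I will use the theorem, due to Barbasch--Vogan in the classical types and Lusztig in general, and proved through Spaltenstein's order-reversing duality, that this bijection is an isomorphism of posets. Under it the cell order corresponds to the reversed closure order on special classes.

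The second step uses Lusztig's description of the pieces in \cite{lusztig_97}. The piece $G_\cal{F}^\unip$ is the union of the unipotent classes $C$ for which $C_\cal{F}$ is the minimal special class whose closure contains $C$. Lusztig shows that the Springer images of $\Irr(W)_\cal{F}$ are supported exactly on the classes in $G_\cal{F}^\unip$. So if $\chi$ corresponds to a class $C_\chi$, then $C_\chi \subseteq \bar{C}_{\cal{F}(\chi)}$, and $C_\chi \not\subseteq \bar{C}'$ for every special class $C'$ properly contained in $\bar{C}_{\cal{F}(\chi)}$.

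The third step combines the two. Take $\cal{F}(\chi) < \cal{F}(\psi)$. By the first step, $\bar{C}_{\cal{F}(\chi)} \supsetneq C_{\cal{F}(\psi)}$ in the closure order, up to the reversal fixed in the first paragraph. I then need to compare the classes $C_\chi$ and $C_\psi$ themselves. I expect this to be the main obstacle. The closure order on all classes, unlike the order on special classes, is not simply inherited from the pieces. Two classes in comparable pieces need not be comparable unless one shows that each piece sits "between" its own special class and the next smaller special classes. My first attempt will use Lusztig's result that each piece is a union of classes lying in $\bar{C}_\cal{F}$ but outside the closures of all strictly smaller special classes. From this I will show that every class in a lower piece lies in the closure of every class in a higher piece. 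If that fails in general, I will instead reduce to almost simple adjoint $G$, using the multiplicativity in \Cref{rem:multiplicative} and the fact that both orders are insensitive to isogeny. I will then verify the claim type by type: in classical types through the Lusztig--Spaltenstein symbol combinatorics, and in exceptional types by inspecting Spaltenstein's tables of the special pieces.
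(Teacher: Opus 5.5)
\textbf{The implication you set out to prove is false.} So neither your ``lower piece inside the closure of every class of a higher piece'' argument nor a type-by-type check can succeed.

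Take $W = W(B_3) = W(C_3)$. Its families include $\{2.1, 21., .3\}$ and $\{1.2\}$: the symbols $\binom{0\,3}{1}$, $\binom{1\,3}{0}$, $\binom{0\,1}{3}$ share the entries $0,1,3$, while $\binom{0\,2}{2}$ stands alone. In type $C_3$, the tables of \Cref{sec:examples} give Springer supports $2.1 \mapsto 42$, $21. \mapsto 411$, $1.2 \mapsto 33$. By your first step, $\cal{F}(21.) = \cal{F}(2.1)$ and $\cal{F}(1.2)$ are strictly comparable, since the special class $33$ lies in the closure of the special class $42$. But $411$ and $33$ are incomparable in the dominance order (partial sums $4,5,6$ versus $3,6,6$). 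So whichever way you align the orders, $\prec_{LR}$ relates $21.$ and $1.2$ while $\prec_\unip$ does not. This pair also refutes your piece claim directly: the pieces of $42$ and $33$ are $\{42, 411\}$ and $\{33\}$. Type $B_3$ is blunter still: $.3 \mapsto (331, \text{nontrivial})$ and $1.2 \mapsto (331, \text{trivial})$ have the same support, yet $\cal{F}(.3) = \cal{F}(2.1) < \cal{F}(1.2)$.

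\textbf{Your second step misquotes \cite{lusztig_97}.} In the same $B_3$ example, $331$ is special, so the piece of $511$ is just $\{511\}$, yet $.3$ lies in the family of $2.1$. What is true is weaker:
supports of members of $\Irr(W)_{\cal{F}}$ lie in the closure of the special class $C_{\cal{F}}$, and those with trivial local system lie in the piece. Members with nontrivial local system can sit on strictly smaller special classes. Separately, in this paper $\Irr(W)_{\{e\}} = \{\varepsilon\}$ and $\Irr(W)_{\{w_\circ\}} = \{1_W\}$ (see the proof of \Cref{cor:cells}); your normalization paragraph swaps them.

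\textbf{The direction actually needed is the opposite one.} Look at what the proof of \Cref{prop:cells} uses. Inverting the unitriangular matrix $((\chi, \spr_{\psi, G})_W)_{\chi, \psi}$ gives $\alpha_{\psi, G}^\z = \sum_\chi m_{\psi\chi}(\chi, \tau_{\z, \sf{v}})_W$, with $m_{\psi\chi} \neq 0$ only if $\chi = \psi$ or $\psi \prec_\unip \chi$. So what is needed is the \emph{opposite} inclusion: $\psi \prec_\unip \chi$ should force $\cal{F}(\psi) \leq \cal{F}(\chi)$. The hypothesis must be strict and the conclusion weak; the $B_3$ pair kills the weak-to-weak version.

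Via your first step, this reads: if $C_\chi \subseteq \bar{C}_\psi$ and $C_\chi \neq C_\psi$, then $C_{\cal{F}(\chi)} \subseteq \bar{C}_{\cal{F}(\psi)}$. For $\chi$ with trivial local system, this does follow from the pieces. In that case $C_{\cal{F}(\chi)}$ is the smallest special class whose closure contains $C_\chi$, and $C_\chi \subseteq \bar{C}_\psi \subseteq \bar{C}_{\cal{F}(\psi)}$. For $\chi$ with nontrivial local system, such as $.3$, the pieces say nothing, and a genuinely new argument is needed.
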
 

\Cref{prop:cells} is part (2) below.

\begin{prop} 
Let $\cal{F} \in \sf{C}_W$ and $\z \in \cal{F}$.
\begin{enumerate} 
\item 	If $\chi \in \Irr(W)$ and $\cal{F} < \cal{F}(\chi)$, then
\begin{align}
\sum_{\chi, \psi} {\{\chi, \psi\}}\psi_\sf{v}(c_\z) = 0.
\end{align}

\item 	If $\psi \in \Irr(W)$ and $\cal{F} < \cal{F}(\psi)$, then $\alpha_{\psi, G}^\z = 0$.

\end{enumerate}
\end{prop}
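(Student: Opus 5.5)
The plan is to prove part (1) from the ideal structure of $H_W$ and \Cref{lem:fourier-blocks}, and then to deduce part (2) by inverting the unitriangular change of basis $\psi \mapsto \spr_{\psi, G}$, using \Cref{lem:partial-order}. I read part (1) as saying that for fixed $\chi$ with $\cal{F} < \cal{F}(\chi)$, the sum over $\psi$ of $\{\chi, \psi\}\psi_\sf{v}(c_\z)$ vanishes. Equivalently, the coefficient of $\chi$ in $\tau_{\z, \sf{v}}$ is zero.

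\textbf{Key fact.} If $\z \in \cal{F}$ and $\cal{F}(\psi) \not\leq \cal{F}$, then $\psi_\sf{v}(c_\z) = 0$. To show this, set $\cal{F}' = \cal{F}(\psi)$.
\begin{enumerate}
\item The element $c_\z$ lies in the two-sided ideal $M_{\leq \cal{F}}$.
\item Since $M_{\leq \cal{F}}$ and $M_{\leq \cal{F}'}$ are both two-sided ideals, we have $M_{\leq \cal{F}} \cdot M_{\leq \cal{F}'} \subseteq M_{\leq \cal{F}} \cap M_{\leq \cal{F}'}$.
\item This intersection is spanned by the $c_w$ with $w$ in cells lying below both $\cal{F}$ and $\cal{F}'$. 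Since $\cal{F}' \not\leq \cal{F}$, it lies inside $M_{< \cal{F}'}$.
\item Hence $M_{\leq \cal{F}}$ acts by zero on the cell module $M_{\cal{F}'}$, and so also after extending scalars to $\bb{Q}(\sf{v})$.
\item Since $\bb{Q}(\sf{v}) \otimes H_W$ is semisimple, the simple module with character $\psi_\sf{v}$ is a direct summand of $\bb{Q}(\sf{v}) \otimes M_{\cal{F}'}$. Therefore $c_\z$ acts on it by zero, and $\psi_\sf{v}(c_\z) = 0$.
\end{enumerate}

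\textbf{Part (1).} By \Cref{lem:fourier-blocks}, only those $\psi$ with $\cal{F}(\psi) = \cal{F}(\chi)$ contribute to the sum. For such $\psi$ we have $\cal{F}(\psi) = \cal{F}(\chi) > \cal{F}$, so $\cal{F}(\psi) \not\leq \cal{F}$. The key fact then kills every remaining term. In particular, $\tau_{\z, \sf{v}} = \sum_\chi t_\chi \chi$ with $t_\chi = 0$ whenever $\cal{F}(\chi) \not\leq \cal{F}$. The same argument gives this stronger statement, not just the case $\cal{F} < \cal{F}(\chi)$.

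\textbf{Part (2).} I would use the standard triangularity of Springer theory. If $\psi \leftrightarrow (u, \kappa)$, then $\spr_{\psi, G}$ equals $\psi$ (from the top degree) plus a nonnegative combination of characters $\psi'$ whose Springer orbit $C_{u'}$ strictly contains $C_u$ in its closure. Thus $\psi' \prec_\unip \psi$, and by \Cref{lem:partial-order} also $\cal{F}(\psi') \leq \cal{F}(\psi)$.

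Writing $t_\chi = \sum_\psi \alpha_{\psi, G}^\z (\chi : \spr_{\psi, G})$ and inverting this unitriangular system from the top of $\prec_\unip$ down, each $\alpha_{\psi, G}^\z$ becomes an integer combination of the $t_{\psi'}$ with $\psi' \succeq_\unip \psi$. For such $\psi'$ we have $\cal{F}(\psi') \geq \cal{F}(\psi) > \cal{F}$, hence $\cal{F}(\psi') \not\leq \cal{F}$, so $t_{\psi'} = 0$ by the strengthened part (1). Therefore $\alpha_{\psi, G}^\z = 0$.

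The main obstacle is keeping the order conventions straight. I need to check three things:
\begin{itemize}
\item lower-degree Springer constituents come from larger orbits;
\item larger orbits correspond to lower cells (for example, the regular orbit corresponds to $1_W$ and to the cell $\{w_\circ\}$);
\item the refinement in \Cref{lem:partial-order} yields the weak inequality $\cal{F}(\psi') \geq \cal{F}(\psi)$ in the needed direction.
\end{itemize}
If that lemma only gives comparability within families, I would fall back on the fact that Springer constituents of $\ur{H}^\ast(\cal{B}_u)$ have $a$-value at least that of $\psi$. Combined with the description of families via special pieces, this should give the same inequality.
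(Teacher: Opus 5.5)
Your proposal is correct and follows the paper's proof. Part (1) combines \Cref{lem:fourier-blocks} with the vanishing of $\psi_{\sf{v}}(c_z)$ when $\cal{F}(\psi)$ is not below the cell of $z$; you justify that vanishing more carefully, through the two-sided ideals $M_{\leq \cal{F}}$ and semisimplicity, where the paper only appeals to the definition of $\leq_{LR}$. Part (2) inverts the unitriangular matrix $((\chi, \spr_{\psi, G})_W)_{\chi, \psi}$ and applies \Cref{lem:partial-order} in exactly the direction you isolate, namely $\psi \prec_{\unip} \psi' \Rightarrow \cal{F}(\psi) \leq \cal{F}(\psi')$.

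Your $a$-value fallback would not work by itself, since the $a$-function only gives necessary conditions for the cell order, but your main argument does not need it.
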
 

\begin{proof}
\begin{enumerate} 
\item 	Fix $\psi \in \Irr(W)$.
If $\{\chi, \psi\} \neq 0$, then $\cal{F}(\psi) = \cal{F}(\chi)$ by \Cref{lem:fourier-blocks}, from which $\cal{F} < \cal{F}(\psi)$, which in turn forces $\psi_\sf{v}(c_\z) = 0$ by the definition of $\leq_{LR}$.
So every term vanishes in the double sum above.

\item 	Observe that the matrix of scalar products $((\chi, \spr_\psi)_W)_{\chi, \psi}$ is unipotent upper-triangular in a total order on $\Irr(W)$ refining $\prec_\unip$.
		Hence its inverse is also upper-triangular in the same total order.
		Now, the claim follows from combining (1) and \Cref{lem:partial-order}.\qedhere

\end{enumerate}
\end{proof}

Let $1_W$ be the trivial character of $W$; let $\varepsilon$ be the sign character defined by $\varepsilon(w) = (-1)^{\ell(w)}$.
\Cref{cor:cells-bottom} is parts (2)--(3) below.

\begin{cor}\label[cor]{cor:cells}
\begin{enumerate}
\item 	If $\z \neq e$, then $\alpha_{\varepsilon, G}^\z = 0$.

\item 	If $\psi \neq 1_W$, then $\alpha_{\psi, G}^{w_\circ} = 0$.

\item 	We have 
		\begin{align}
		\alpha_{1_W, G}^{w_\circ}(\sf{v}) = \displaystyle\sum_{w \in W} \sf{v}^{2\ell(w) - \ell(w_\circ)}.
		\end{align}
	
\end{enumerate}
\end{cor}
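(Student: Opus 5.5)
The three parts all follow from \Cref{prop:cells}(2), once we identify the two-sided cells attached to the sign and trivial characters. Part (3) additionally needs a direct evaluation of $\tau_{w_\circ, \sf{v}}$ and of $\spr_{1_W, G}$.

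\emph{Identifying the extreme cells.}
\begin{itemize}
\item The top cell $\{e\}$: the module $M_{\{e\}} = H_W/M_{<\{e\}}$ is free of rank one, spanned by the image of $c_e = 1$. For $s \in S$, the product $c_s c_e = c_s$ lies in $M_{<\{e\}}$, so each $c_s$ acts by $0$. The character $\chi_\sf{v}$ with $\chi_\sf{v}(c_s) = 0$ is the one where $\delta_s$ acts by $-1$, because $c_s = \sf{v}^{-1}(1+\delta_s)$. Hence $M_{\{e\}}$ affords $\varepsilon_\sf{v}$, so $\cal{F}(\varepsilon) = \{e\}$. Since each simple module occurs in exactly one cell module, $\varepsilon$ is the only character attached to $\{e\}$.
\item The bottom cell $\{w_\circ\}$: the module $M_{\{w_\circ\}}$ is the span of $c_{w_\circ}$, and $c_s c_{w_\circ} = (\sf{v}+\sf{v}^{-1})c_{w_\circ}$ for all $s$. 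So $\delta_s$ acts by $\sf{v}^2$, the module is the deformed trivial module, and $\cal{F}(1_W) = \{w_\circ\}$. Again $1_W$ is the only character attached to this cell.
\end{itemize}

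\emph{Parts (1) and (2).} For (1), every $\z \neq e$ lies in a cell $\cal{F} < \{e\} = \cal{F}(\varepsilon)$, so \Cref{prop:cells}(2) gives $\alpha_{\varepsilon, G}^\z = 0$. For (2), if $\psi \neq 1_W$ then $\cal{F}(\psi) \neq \{w_\circ\}$. Since $\{w_\circ\}$ is the unique bottom cell, this means $\{w_\circ\} < \cal{F}(\psi)$, and \Cref{prop:cells}(2) gives $\alpha_{\psi, G}^{w_\circ} = 0$.

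\emph{Part (3).} By part (2), the expansion \eqref{eq:main} collapses to
\[
\tau_{w_\circ, \sf{v}} = \alpha_{1_W, G}^{w_\circ}\, \spr_{1_W, G}.
\]
It therefore suffices to compute both sides.
\begin{itemize}
\item Under the Springer correspondence, $1_W$ corresponds to $(u, 1)$ with $u$ regular unipotent. Then $\cal{B}_u$ is a point, and $W$ acts trivially on $\ur{H}^0(\cal{B}_u)$, whatever $A_u$ is. Hence $\spr_{1_W, G} = 1_W$.
\item For $\tau_{w_\circ, \sf{v}}$, first note that $\psi_\sf{v}(c_{w_\circ}) = 0$ for $\psi \neq 1_W$. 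This holds because $w_\circ$ lies in the cell $\{w_\circ\} < \cal{F}(\psi)$, as in the proof of \Cref{prop:cells}(1).
\item In the deformed trivial representation $\delta_w \mapsto \sf{v}^{2\ell(w)}$, the element $c_{w_\circ} = \sf{v}^{-\ell(w_\circ)}\sum_w \delta_w$ evaluates to $\sum_{w} \sf{v}^{2\ell(w)-\ell(w_\circ)}$. Here we use that all Kazhdan--Lusztig polynomials $p_{y, w_\circ}$ equal $1$.
\item By \Cref{lem:fourier-blocks}, $\{\chi, 1_W\} = 0$ unless $\chi = 1_W$, since the family of $1_W$ is a singleton. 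We also need $\{1_W, 1_W\} = 1$. This is the normalization of the exotic Fourier transform on a one-element family. Alternatively, apply \eqref{eq:exotic-parabolic} with $W' = W$: it reads $\sum_\chi \{\chi, \psi\}\dim V_\chi^{W} = \dim V_\psi^{W}$, so for $\psi = 1_W$ the only surviving term on the left is $\chi = 1_W$, giving $\{1_W, 1_W\} = 1$.
\end{itemize}
Together these give $\tau_{w_\circ, \sf{v}} = \big(\sum_w \sf{v}^{2\ell(w)-\ell(w_\circ)}\big) 1_W$. Comparing with the collapsed expansion yields the stated formula for $\alpha_{1_W, G}^{w_\circ}$.

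The only step needing care is identifying the extreme cell modules and checking the normalization $\{1_W, 1_W\} = 1$. The route through \eqref{eq:exotic-parabolic} settles the normalization without appealing to the explicit definition of $\{-,-\}$.
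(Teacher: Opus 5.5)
Your proof is correct, and its skeleton matches the paper's. You identify the characters attached to the extreme cells and apply \Cref{prop:cells}. For (3) you collapse \eqref{eq:main} to the single term $\alpha_{1_W, G}^{w_\circ}\spr_{1_W, G}$ with $\spr_{1_W,G} = 1_W$, then evaluate $1_{W,\sf{v}}(c_{w_\circ})$ using $\sf{v}^{\ell(w_\circ)}c_{w_\circ} = \sum_w \delta_w$. The differences are in two justifications.

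For (2), the paper gets $\Irr(W)_{\{w_\circ\}} = \{1_W\}$ by transporting part (1) from the top cell to the bottom one via \cite[Lemma~5.14(iii)]{lusztig_84}. You instead compute the bottom cell module directly from $c_s c_{w_\circ} = (\sf{v} + \sf{v}^{-1})c_{w_\circ}$. Your route is self-contained and handles both extreme cells by the same one-line computation; the paper's makes (2) a formal consequence of (1).

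For (3), the paper passes from $(1_W, \tau_{w_\circ,\sf{v}})_W$ to $1_{W,\sf{v}}(c_{w_\circ})$ without comment. That step tacitly uses both $\psi_{\sf{v}}(c_{w_\circ}) = 0$ for $\psi \neq 1_W$ and the normalization $\{1_W, 1_W\} = 1$. You supply both. Your derivation of the normalization from \eqref{eq:exotic-parabolic} with $W' = W$ is valid. It can also be read off the appendix: $\rho_{1_W}$ is the trivial character of $G^F$, which occurs exactly once in every $R_w$.

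One small misattribution. That $1_W$ is the only character attached to $\{w_\circ\}$, and $\varepsilon$ the only one attached to $\{e\}$, follows because the corresponding cell module has rank one. It does not follow from each simple module occurring in exactly one cell module; that fact only says $1_W$ (resp.\ $\varepsilon$) is attached to no other cell.
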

	
\begin{proof}
\begin{enumerate}
\item 	This follows from observing that the module for the topmost two-sided cell $\{e\}$ is simple with character $\varepsilon_\sf{v}$: in other words, that $\Irr(W)_{\{e\}} = \{\varepsilon\}$.
		Indeed, $\varepsilon_\sf{v}$ is determined by the property that $\varepsilon_\sf{v}(\delta_s) = -1$ for all simple reflections $s$.
		This means $\varepsilon_\sf{v}(c_s) = 0$ for all $s$, which in turn describes the character of the cell module.

\item 	This follows from observing that the module for the bottommost two-sided cell $\{w_\circ\}$ is simple with character $1_{W, \sf{v}}$: in other words, that $\Irr(W)_{\{w_\circ\}} = \{1_W\}$.
		This in turn follows from (1) and \cite[Lemma~5.14(iii)]{lusztig_84}.

\item 	By (2), we must have $\tau_{w_\circ, \sf{v}} = \alpha_{1_W, G}^{w_\circ} \spr_{1_W}$.
		But $\spr_{1_W} = 1_W$, so we deduce that $\alpha_{1_W, G}^{w_\circ} = (1_W, \tau_{w_\circ, \sf{v}})_W = 1_{W, \sf{v}}(c_{w_\circ})$.
		Since $\sf{v}^{\ell(w_\circ)} c_{w_\circ} = \sum_{w \in W} \delta_w$ and $1_{W, \sf{v}}(\delta_w) = \sf{v}^{2\ell(w)}$, we win.\qedhere

\end{enumerate}
\end{proof}

\section{Examples}\label[section]{sec:examples}

%Observe, as well, that the functions $\tau_{\z, \sf{v}}$ and $\spr_{\psi, G}$ are multiplicative with respect to direct-product factorizations of $W$ that arise from direct-product factorizations of $G$.

Recall $(\vartriangle)_{\psi, G}^\z, (\pm)_{\psi, G}^\z, (+)_{\psi, G}^\z$ from \S\ref{subsec:springer}.
In this section, we summarize their behavior in each irreducible root system outside type $A$ of rank $\leq 5$, as well as in those of rank $6$ when $\z$ is rationally smooth.

Recall that in~\cite{haiman}, Haiman verified \Cref{conj:haiman}---stating that $(\vartriangle)_{\psi, G}^\z$ and $(+)_{\psi, G}^\z$ hold in type $A$---up through rank $6$.

% \Cref{conj:main} for all Weyl groups of rank $\leq 5$---it remains to verify it for the irreducible Weyl groups of types $BC_2$, $G_2$, $BC_3$, $BC_4$, $D_4$, $F_4$, $BC_5$, $D_5$.

\subsection{Methodology}

We can rewrite identity \eqref{eq:main} as the collection of identities
\begin{align}\label{eq:main-chi}
(\chi, \tau_{\z, \sf{v}})_W
	= \sum_{\psi \in \Irr(W)}
		\alpha_{\psi, G}^\z(\sf{v})
		(\chi, \spr_{\psi, G})_W
		\quad\text{for $\chi \in \Irr(W)$}.
\end{align}
Explicitly, the left-hand side is $\sum_\psi {\{\chi, \psi\}}\psi_\sf{v}(c_\z)$.
In the CHEVIE package for GAP3:
\begin{itemize} 
\item 	The pairings $\{\chi, \psi\}$ can be computed from the functions \texttt{UnipotentCharacter} and \texttt{AlmostCharacter}.
\item 	The values $\psi_\sf{v}(c_\z)$ can be computed from \texttt{HeckeCharValues}.
\item 	The pairings $(\chi, \spr_{\psi, G})_W$ can be computed from \texttt{ICCTable}.
		In the GAP3 manual and other literature on the Lusztig--Shoji algorithm, $(\chi, \spr_{\psi, G})_W$ is denoted $P_{\psi, \chi}$:
		See~\cite{gm,achar,achar_rims}.
\end{itemize}
Note that the ordering of $\Irr(W)$ used in \texttt{ICCTable} differs from the ordering used in the other functions above.
After matching the orderings, we obtain the $\alpha_{\psi, G}^\z$ by multiplying the matrix $((\chi, \tau_{\z, \sf{v}})_W)_{\chi, \z}$ on the left by the inverse of the matrix $((\chi, \spr_{\psi, G})_W)_{\chi, \psi}$.
To process the CHEVIE data, we used Python scripts written with help from Claude Sonnet 4.6.
The data and scripts are available at:
\begin{align} \text{\url{https://mqtrinh.github.io/math/research/code/chevie/}}
\end{align} 
The webpage also includes data for the root systems of types $B_7$, $C_7$, $D_7$ when $\z$ is rationally smooth of length $\leq 24$.

%In particular, it helps us to draw upon another observation from~\cite{haiman}.
%For any elements $\beta, \beta' \in H_W$, set \dfemph{$\beta \sim \beta'$} if and only if $\psi_\sf{v}(\beta) = \psi_\sf{v}(\beta')$ for all $\psi \in \Irr(W)$.
%For any subset $\Omega \subseteq W$, say that an element $\z \in W$ is \dfemph{$\Omega$-positive} when $c_\z \sim \sum_{w \in \Omega} m_\z^w c_w$ for some $m_\z^w \in \bb{Z}_{\geq 0}$.
%We see that if $\z \in W$ is $\Omega$-positive, then to verify \Cref{conj:main} for the fixed element $\z$, it suffices to verify it with $w$ in place of $\z$ for all $w \in \Omega$.

\subsection{Notation}

We use the labels in CHEVIE, in the ordering of \texttt{ICCTable},
%in turn based on~\cite{carter},
for irreducible characters of $W$.
We use reduced words in the simple reflections $s_i \in S$ to label elements of $W$, and use the numbering in CHEVIE to label the $s_i$.\footnote{See \url{https://webusers.imj-prg.fr/~jean.michel/gap3/htm/chap085.htm}.}
We write $w_\circ$ for the longest element.

In rank $\leq 3$, we provide tables of the $\alpha_{\psi, G}^\z$, listing $\psi \in \Irr(W)$ along rows and $\z \in W$ along columns.
To compactify the entries, we write
\begin{align} 
	(\sf{a}_m\cdots \sf{a}_1\sf{a}_0) \vcentcolon= \sf{a}_m\sf{v}^m + \cdots + \sf{a}_1\sf{v} + \sf{a}_0 + \sf{a}_1\sf{v}^{-1} + \cdots + \sf{a}_m\sf{v}^{-m}.
\end{align}
%\quad\text{for any positive integers $\sf{a}_0, \sf{a}_1, \ldots, \sf{a}_m$}.
In all ranks, we provide tables stating for each $\psi$ whether $(\vartriangle)_{\psi, G}^\z, (\pm)_{\psi, G}^\z, (+)_{\psi, G}^\z$ fail for some $\z$ and whether $\psi$ is inflated from certain quotients.
Here, we also give the label in CHEVIE for the image of $\psi$ under the Springer correspondence.

For any $w = s_{i_1} \cdots s_{i_\ell} \in W$, we write $\varphi_{i_1\cdots i_\ell}$ to denote the quotient map out of $W$ that is reduction modulo $w$.
We set $\varphi_\circ = \varphi_{w_\circ}$.

\subsection{Type $B_2$}

\begin{comment} 
The table of values $(\chi, \spr_{\psi, G})_W$:
\begin{align}
\begin{array}{r|rrrrr}
&5&311^{(11)}&311&221&\varepsilon\\ 
\hline
{2.}&1&&1&1&1\\
{11.}&&1&&&1\\
{1.1}&&&1&1&2\\
{.2}&&&&1&1\\
{.11}&&&&&1\\
\end{array}
\end{align}
\end{comment}
\begin{comment}
The table of $(\chi, \tau_{\z, \sf{v}})_W$:
\begin{align}
\begin{array}{r|rrrrrrr}
		&e	&t		&s		&ts,st	&tst		&sts	&w_\circ\\
		\hline	
{2.}	&1	&(10)	&(10)	&(102)	&(1020)		&(1020)	&(10202)\\
{11.}	&1	&		&(10)	&1		&(10)		&		&\\
1.1	&2	&(10)	&(10)	&1		&			&		&\\
{.2}	&1	&(10)	&		&1		&			&(10)	&\\
{.11}	&1	&		&		&		&			&		&\\
\end{array}
\end{align}
\end{comment} 
%Note that $s_1$ corresponds to the short simple root.
The $\alpha_{\psi, G}^\z$:
\begin{align}
\tiny\begin{array}{r|rrrrrrr}
	&e	&2		&1		&21, 12		&212		&121	&w_\circ\\
\hline	
2.
	&	&		&		&(101)	&(1020)		&(1020)	&(10202)\\
11.
	&	&		&(10)	&1		&(10)		&		&\\
1.1
	&	&		&(10)	&		&			&-(10)	&\\
.2
	&	&(10)	&		&1		&			&(10)	&\\
.11
	&1	&		&		&		&			&		&\\
\end{array}
\end{align}

The unique maximal quotient map to a product of symmetric groups is $\varphi_\circ \colon W \to W(A_1 \times A_1)$.

{\tiny\begin{longtable}{rl|rrrr}
	&
	&$\neg (\vartriangle)$
	&$\neg (\pm)$
	&$\neg (+)$
	&$\varphi_\circ$-inflation\\
	\hline	
2.	&$5$
	&&&&$\checkmark$\\
11.	&$311(11)$
	&&&&$\checkmark$\\
1.1	&$311$
	&&&$\checkmark$\\
.2	&$221$
	&&&&$\checkmark$\\
.11	&$11111$
	&&&&$\checkmark$\\
\end{longtable}}

\subsection{Type $G_2$}

\begin{comment} 
The table of $(\chi, \spr_{\psi, G})_W$:
\begin{align}
\begin{array}{r|rrrrrr}
&G_2&G_2(a_1)^{(21)}&G_2(a_1)&\tilde A_1&A_1&\varepsilon\\ 
\hline
{\phi_{1,0}}&1&&1&1&1&1\\
{\phi_{1,3}'}&&1&&1&&1\\
{\phi_{2,1}}&&&1&1&1&2\\
{\phi_{2,2}}&&&&1&1&2\\
{\phi_{1,3}''}&&&&&1&1\\
{\phi_{1,6}}&&&&&&1\\
\end{array}
\end{align}
\end{comment} 
\begin{comment} 
The table of $(\chi, \tau_{\z, \sf{v}})_W$:
\begin{align}
\scriptsize\begin{array}{r|rrrrrrrrrr}
				&e	&t		&s		&ts, st	&tst	&sts	&tsts, stst	&tstst		&ststs		&w_\circ\\
				\hline 
\phi_{1,0}		&1	&(10)	&(10)	&(102)	&(1020)	&(1020)	&(10202)	&(102020)	&(102020)	&(1020202)\\
\phi_{1,3}'		&1	&(10)	&		&1		&		&(10)	&1			&(10)		&			&\\
\phi_{2,1}		&2	&(10)	&(10)	&1		&		&(10)	&			&			&			&\\
\phi_{2,2}		&2	&(10)	&(10)	&2		&(10)	&		&1			&			&			&\\
\phi_{1,3}''	&1	&		&(10)	&1		&(10)	&		&1			&			&(10)		&\\
\phi_{1,6}		&1	&		&		&		&		&		&			&			&			&\\
\end{array}
\end{align}
\end{comment}
%Note that $s_1$ corresponds to the long simple root.
The $\alpha_{\psi, G}^\z$:
\begin{align}
\tiny\begin{array}{r|rrrrrrrrrr}
				&e	&2		&1		&21, 12	&212	&121	&2121, 1212	&21212	&12121	&w_\circ\\
				\hline 
\phi_{1,0}		&	&		&		&(101)	&(1020)	&(1020)	&(10202)	&(102020)	&(102020)	&(1020202)\\
\phi_{1,3}'		&	&		&		&		&		&		&1			&(10)		&(10)		&\\
\phi_{2,1}		&	&		&		&-1		&-(10)	&-(10)	&-1			&			&			&\\
\phi_{2,2}		&	&(10)	&		&1		&		&(10)	&			&			&-(10)		&\\
\phi_{1,3}''	&	&		&(10)	&1		&(10)	&		&1			&			&(10)		&\\
\phi_{1,6}		&1	&		&		&		&		&		&			&			&			&\\
\end{array}
\end{align}

The maximal quotient maps to products of symmetric groups are $\varphi_{1212} \colon W \to W(A_1 \times A_1)$ and $\varphi_\circ \colon W \to W(A_2)$.
The latter is what we called the $G_2$-to-$A_2$ quotient in \S\ref{subsec:new}.

{\tiny\begin{longtable}{rl|rrrrr}
				&
				&$\neg (\vartriangle)$
				&$\neg (\pm)$
				&$\neg (+)$
				&$\varphi_{1212}$-inflation
				&$\varphi_\circ$-inflation\\
				\hline	
$\phi_{1,0}$	&$G_2$
				&&&&$\checkmark$&$\checkmark$\\
$\phi_{1,3}'$	&$G_2(a_1)(21)$
				&&&&$\checkmark$\\
$\phi_{2,1}$	&$G_2(a_1)$
				&&&$\checkmark$\\
$\phi_{2,2}$	&$\tilde A_1$
				&&&$\checkmark$&&$\checkmark$\\
$\phi_{1,3}''$	&$A_1$
				&&&&$\checkmark$\\
$\phi_{1,6}$	&$1$
				&&&&$\checkmark$&$\checkmark$\\
\end{longtable}}

The failure of $(+)_{\psi, G}^\z$ for $\psi = \phi_{2,2}$ is the reason for the complication involving the $G_2$-to-$A_2$ quotient in \Cref{conj:main}.

\subsection{Type $B_3$}

\begin{comment}
The table of $(\chi, \spr_{\psi, G})_W$:
\begin{align}
\begin{array}{r|rrrrrrrrrr}
&7&511^{(11)}&511&331^{(11)}&331&322&31111^{(11)}&31111&22111&\varepsilon\\ 
\hline 
{3.}&1&&1&&1&1&&1&1&1\\
{21.}&&1&&&&1&1&1&1&2\\
{2.1}&&&1&&1&1&&2&2&3\\
{.3}&&&&1&&&&&1&1\\
{1.2}&&&&&1&1&&1&2&3\\
{11.1}&&&&&&1&1&1&1&3\\
{111.}&&&&&&&1&&&1\\
{1.11}&&&&&&&&1&1&3\\
{.21}&&&&&&&&&1&2\\
{.111}&&&&&&&&&&1\\
\end{array}
\end{align}
\end{comment}
%In CHEVIE, $(s_1s_2)^4 = (s_2s_3)^3 = e$.
%(Thus, $s_1$ corresponds to the short simple root.)
The $\alpha_{\psi, G}^\z$:
\begin{align}
\tiny\begin{array}{r|rrrrrrrrrrr}
		&e		&3, 2	&1		&32, 23	&21, 12	&13		&212	&321,213,132,123	&232	&121	&3212, 2123\\
\hline 
3.		&		&		&		&		&		&		&		&(1010)				&		&		&(10202)\\
21.		&		&		&		&		&(101)	&		&(1020)	&(10)				&		&(1020)	&(203)\\
2.1		&		&		&		&		&(101)	&		&(1020)	&					&		&(1020)	&(101)\\
.3		&		&		&		&(101)	&		&		&		&(10)				&(1020)	&		&1\\
1.2		&		&		&		&(101)	&		&		&		&(10)				&(1020)	&		&1\\
11.1	&		&		&		&		&		&(102)	&		&(10)				&		&		&\\
111.	&		&		&(10)	&		&		&		&(10)	&					&		&		&1\\
1.11	&		&		&(10)	&		&		&		&		&					&		&-(10)	&\\
.21		&		&(10)	&		&1		&		&		&		&					&		&(10)	&\\
.111	&1		&		&		&		&		&		&		&					&		&		&\\
\end{array}
\end{align} 
\begin{align} 
\tiny\begin{array}{r|rrrrrrr}
			&2321,1321,1213,1232	&2132		&1212		&32123		&21232,23212	&21321,12132	&13212,12123\\
\hline 
3.			&(10202)				&(10202)	&			&(103040)	&(103040)		&(103030)		&(102020)\\
21.			&(102)					&(102)		&(10202)	&(2050)		&(1030)			&(1030)			&(1020)\\
2.1			&						&			&(10202)	&(1020)		&				&(1020)			&(1020)\\
.3			&(102)					&(102)		&			&(10)		&(10)			&(20)			&(10)\\
1.2			&(102)					&(102)		&			&(10)		&(10)			&(10)			&\\
11.1		&						&(102)		&			&			&				&(10)			&\\
111.		&						&			&			&(10)		&				&				&\\
1.11		&\\
.21			&\\
.111		&\\
\end{array}
\end{align}
\begin{align} 
\tiny\begin{array}{r|rrrrrrr}
			&12321		&232123		&212321,123212		&213212,121232	&132123		&121321		&2123212\\
			\hline 
3.			&(103040)	&(1030506)	&(1040606)			&(1030404)		&(1030404)	&(1030404)	&(10407080)\\
21.			&(1030)		&(10304)	&(204)				&(102)			&(10304)	&(102)		&(1030)\\
2.1			&			&			&(102)				&(102)			&(10304)	&(102)		&(1030)\\
.3			&(1030)		&			&(204)				&(102)			&(102)		&(102)		&(2050)\\
1.2			&(1030)		&			&(102)				&				&			&			&(1020)\\
11.1		&			&			&					&				&			&(102)		&\\
111.		&\\
\vdots		&\\
\end{array}
\end{align}
\begin{align} 
\tiny\begin{array}{r|rrrrrr}
			&2132123,1232123	&1212321,1213212 	&21232123	&12123212	&12132123			&w_\circ\\
			\hline
3.			&(10407080)			&(10305060)			&(103050606)&(103050708)&(104080[11]0[12])	&(1030507080)\\
21.			&(1030)				&(10)				&			&			&(102)				&\\
2.1			&(1030)				&(10)				&(10304)	&			&(102)				&\\
.3			&(1030)				&(1020)				&			&(10202)	&(10304)			&\\
1.2			&					&					&(10304)	&			&					&\\
11.1		&\\
111.		&\\
\vdots		&\\
\end{array}
\end{align}

The unique maximal quotient map to a product of symmetric groups is $\varphi_{1212} \colon W \to W(A_1 \times A_2)$.

{\tiny\begin{longtable}{rl|rrrr}
		&
		&$\neg (\vartriangle)$
		&$\neg (\pm)$
		&$\neg (+)$
		&$\varphi_{1212}$-inflation\\
		\hline	
3.		&$7$
		&&&&$\checkmark$\\
21.		&$511(11)$
		&&&&$\checkmark$\\
2.1		&$511$
		&\\
.3		&$331(11)$
		&&&&$\checkmark$\\
1.2		&$331$
		&\\
11.1	&$322$
		&\\
111.	&$31111(11)$
		&&&&$\checkmark$\\
1.11	&$31111$
		&&&$\checkmark$\\
.21		&$22111$
		&&&&$\checkmark$\\
.111	&$1111111$
		&&&&$\checkmark$
\end{longtable}}

\subsection{Type $C_3$}

%The $s_i$ are labeled the same way as in type $B_3$.
%(Now, $s_1$ corresponds to the \emph{long} simple root.)
The $\alpha_{\psi, G}^\z$:
\begin{align}
\tiny\begin{array}{r|rrrrrrrrrrr}
		&e		&3, 2	&1		&32, 23	&21, 12	&13		&212	&321,213,132,123	&232	&121	&3212, 2123\\
		\hline 
3.		&		&		&		&		&		&		&		&(1010)				&		&		&(10202)\\
.3		&		&		&		&		&		&		&		&					&		&		&\\
2.1		&		&		&		&		&		&		&		&-(10)				&		&		&-(102)\\
1.2		&		&		&		&(101)	&		&		&		&(10)				&(1020)	&		&1\\
21.		&		&		&		&		&(101)	&		&(1020)	&(10)				&		&(1020)	&(203)\\
11.1	&		&		&		&		&		&(102)	&		&(10)				&		&		&\\
.21		&		&(10)	&		&1		&1		&		&		&					&		&(10)	&\\
1.11	&		&(10)	&		&1		&		&		&-(10)	&					&		&		&-1\\
111.	&		&		&(10)	&		&1		&		&(10)	&					&		&		&1\\
.111	&1		&		&		&		&		&		&		&					&		&		&\\
\end{array}
\end{align} 
\begin{align} 
\tiny\begin{array}{r|rrrrrrr}
		&2321,1321,1213,1232	&2132		&1212		&32123		&21232,23212	&21321,12132	&13212,12123\\
		\hline 
3.		&(10202)				&(10202)	&			&(103040)	&(103040)		&(103030)		&(102020)\\
.3		&						&			&			&			&				&(10)			&(10)\\
2.1		&-(102)					&-(102)		&			&-(1030)	&-(1030)		&-(10)			&\\
1.2		&(102)					&(102)		&			&(10)		&(10)			&(10)			&\\
21.		&(102)					&(102)		&(10202)	&(2050)		&(1030)			&(1030)			&(1020)\\
11.1	&						&(102)		&			&			&				&(10)			&\\
.21		&						&			&			&			&				&				&\\
1.11	&						&			&			&-(10)\\
111.	&						&			&			&(10)\\
.111	&\\
\end{array}
\end{align}
\begin{align} 
\tiny\begin{array}{r|rrrrrrr}
		&12321		&232123		&212321,123212		&213212,121232	&132123		&121321		&2123212\\
		\hline 
3.		&(103040)	&(1030506)	&(1040606)			&(1030404)		&(1030404)	&(1030404)	&(10407080)\\
.3		&			&			&(102)				&(102)			&(102)		&(102)		&(1030)\\
2.1		&-(1030)	&-(10304)	&-(102)				&				&			&			&\\
1.2		&(1030)		&			&(102)				&				&			&			&(1020)\\
21.		&(1030)		&(10304)	&(204)				&(102)			&(10304)	&(102)		&(1030)\\
11.1	&			&			&					&				&			&(102)		&\\
.21		&\\
1.11	&\\
111.	&\\
.111	&\\
\end{array}
\end{align}
\begin{align} 
\tiny\begin{array}{r|rrrrrr}
		&2132123,1232123	&1212321,1213212 	&21232123	&12123212	&12132123			&w_\circ\\
		\hline
3.		&(10407080)			&(10305060)			&(103050606)&(103050708)&(104080[11]0[12])	&(1030507080)\\
.3		&(1030)				&(1020)				&(10304)	&(10202)	&(10304)			&\\
2.1		&					&					&(10304)	&			&					&\\
1.2		&					&					&			&			&					&\\
21.		&(1030)				&(10)				&			&			&(102)				&\\
11.1	&\\
.21		&\\
\vdots	&\\
\end{array}
\end{align}

As in type $B_3$, it suffices to consider $\varphi_{1212} \colon W \to W(A_1 \times A_2)$.

{\tiny\begin{longtable}{rl|rrrr}
		&
		&$\neg (\vartriangle)$
		&$\neg (\pm)$
		&$\neg (+)$
		&$\varphi_{1212}$-inflation\\
		\hline	
3.		&$6$
		&&&&$\checkmark$\\
.3		&$42(11)$
		&&&&$\checkmark$\\
2.1		&$42$
		&&&$\checkmark$\\
1.2		&$33$
		&\\
21.		&$411$
		&&&&$\checkmark$\\
11.1	&$222$
		&\\
.21		&$2211(11)$
		&&&&$\checkmark$\\
1.11	&$2211$
		&&&$\checkmark$\\
111.	&$21111$
		&&&&$\checkmark$\\
.111	&$111111$
		&&&&$\checkmark$\\
\end{longtable}}

\subsection{Type $B_4$}

%In CHEVIE, $(s_1s_2)^4 = (s_2s_3)^3 = (s_3s_4)^3 = e$.
The unique maximal quotient map to a product of symmetric groups is $\varphi_{1212} \colon W \to W(A_1 \times A_3)$.

{\tiny\begin{longtable}{rl|rrrr}
		&
		&$\neg (\vartriangle)$
		&$\neg (\pm)$
		&$\neg (+)$
		&$\varphi_{1212}$-inflation\\
		\hline	
4.		&$9$
		&&&&$\checkmark$\\
31.		&$711(11)$
		&&&&$\checkmark$\\
3.1		&$711$
		&&&\\
.4		&$531(11,11)$
		&&&&$\checkmark$\\
22.		&$531(2,11)$
		&&&&$\checkmark$\\
2.2		&$531$
		&&&$\checkmark$\\
1.3		&$441$
		&&&\\
21.1	&$522$
		&&&\\
211.	&$51111(11)$
		&&&&$\checkmark$\\
2.11	&$51111$
		&&&\\
11.2	&$333$
		&&&\\
.31		&$33111(11)$
		&&&&$\checkmark$\\
1.21	&$33111$
		&&&\\
111.1	&$32211(11)$
		&&&\\
11.11	&$32211$
		&&&$\checkmark$\\
.22		&$22221$
		&&&&$\checkmark$\\
1111.	&$3111111(11)$
		&&&&$\checkmark$\\
1.111	&$3111111$
		&&&$\checkmark$\\
.211	&$2211111$
		&&&&$\checkmark$\\
.1111	&$111111111$
		&&&&$\checkmark$\\
\end{longtable}}

\subsection{Type $C_4$}

%The $s_i$ are labeled the same way as in type $B_4$.
As in type $B_4$, it suffices to consider $\varphi_{1212} \colon W \to W(A_1 \times A_3)$.

{\tiny\begin{longtable}{rl|rrrr}
		&
		&$\neg (\vartriangle)$
		&$\neg (\pm)$
		&$\neg (+)$
		&$\varphi_{1212}$-inflation\\
		\hline	
4.		&$8$
		&&&&$\checkmark$\\
.4		&$62(11)$
		&&&&$\checkmark$\\
3.1		&$62$
		&$\checkmark$&$\checkmark$&$\checkmark$\\
1.3		&$44(11)$
		&\\
2.2		&$44$
		&\\
31.		&$611$
		&&&&$\checkmark$\\
22.		&$422(11)$
		&&&&$\checkmark$\\
21.1	&$422$
		&\\
.31		&$4211(11)$
		&&&&$\checkmark$\\
2.11	&$4211$
		&&&$\checkmark$\\
11.2	&$332$
		&\\
1.21	&$3311$
		&\\
.22		&$2222(11)$
		&&&&$\checkmark$\\
11.11	&$2222$
		&&&$\checkmark$\\
211.	&$41111$
		&&&&$\checkmark$\\
111.1	&$22211$
		&\\
.211	&$221111(11)$
		&&&&$\checkmark$\\
1.111	&$221111$
		&&&$\checkmark$\\
1111.	&$2111111$
		&&&&$\checkmark$\\
.1111	&$11111111$
		&&&&$\checkmark$\\
\end{longtable}}

\subsection{Type $D_4$}

\begin{comment} 
The table of values $(\chi, \spr_{\psi, G})_W$:
\begin{align}
\scriptsize
\begin{array}{r|rrrrrrrrrrrrr}
&71&53&5111&44+&44-&3311^{(11)}&3311&3221&311111&2222+&2222-&221111&\varepsilon\\
\hline
{.4}&1&1&1&1&1&&1&1&1&1&1&1&1\\
{1.3}&&1&1&1&1&1&1&1&2&2&2&3&4\\
{.31}&&&1&&&&1&1&2&1&1&2&3\\
{2\cdot -}&&&&1&&&1&1&1&2&1&2&3\\
{2\cdot +}&&&&&1&&1&1&1&1&2&2&3\\
{11.2}&&&&&&1&&&1&1&1&3&6\\
{1.21}&&&&&&&1&1&2&2&2&4&8\\
{.22}&&&&&&&&1&1&1&1&1&2\\
{.211}&&&&&&&&&1&&&1&3\\
{11\cdot -}&&&&&&&&&&1&&1&3\\
{11\cdot +}&&&&&&&&&&&1&1&3\\
{1.111}&&&&&&&&&&&&1&4\\
{.1111}&&&&&&&&&&&&&1\\
\end{array}
\end{align}
We write $s, t, y, u$ for the simple reflections respectively numbered $1, 2, 3, 4$ in CHEVIE, so that $(sy)^3 = (ty)^3 = (uy)^3 = e$.
\end{comment} 
%In CHEVIE, $(s_1s_3)^3 = (s_2s_3)^3 = (s_3s_4)^3 = e$.
The maximal quotient maps to products of symmetic groups are $\varphi_{12}, \varphi_{14}, \varphi_{24} \colon W \to W(A_3)$.
The triality of $D_4$ permutes these maps transitively, so it is enough to work with $\varphi_{12}$.

{\tiny\begin{longtable}{rl|rrrr}
			&
			&$\neg (\vartriangle)$
			&$\neg (\pm)$
			&$\neg (+)$
			&$\varphi_{12}$-inflation\\
			\hline	
$.4$		&$71$
			&&&&$\checkmark$\\
$1.3$		&$53$
			&&&$\checkmark$\\
$.31$		&$5111$
			&&&&$\checkmark$\\
$2\cdot-$	&$44\cdot+$
			&\\
$2\cdot+$	&$44\cdot-$
			&\\
$11.2$		&$3311(11)$
			&\\
$1.21$		&$3311$
			&&&$\checkmark$\\
$.22$		&$3221$
			&&&&$\checkmark$\\
$.211$		&$311111$
			&&&&$\checkmark$\\
$11\cdot-$	&$2222\cdot+$
			&\\
$11\cdot+$	&$2222\cdot-$
			&\\
$1.111$		&$221111$
			&\\
$.1111$		&$11111111$
			&&&&$\checkmark$\\
\end{longtable}}

\subsection{Type $F_4$}

%In CHEVIE, $(s_1s_2)^3 = (s_2s_3)^4 = (s_3s_4)^3 = e$, where $s_3$ and $s_4$ correspond to the short simple roots.
The unique maximal quotient map to a product of symmetric groups is $\varphi_{2323} \colon W \to W(A_2 \times A_2)$.

{\tiny\begin{longtable}{rl|rrrr}
				&
				&$\neg (\vartriangle)$
				&$\neg (\pm)$
				&$\neg (+)$
				&$\varphi_{2323}$-inflation\\
				\hline	
$\phi_{1,0}$	&$F_4$
				&&&&$\checkmark$\\
$\phi_{2,4}'$	&$F_4(a_1)(11)$
				&&&&$\checkmark$\\
$\phi_{4,1}$	&$F_4(a_1)$
				&$\checkmark$&$\checkmark$&$\checkmark$\\
$\phi_{2,4}''$	&$F_4(a_2)(11)$
				&&&&$\checkmark$\\
$\phi_{9,2}$	&$F_4(a_2)$
				&$\checkmark$&$\checkmark$&$\checkmark$\\
$\phi_{8,3}'$	&$C_3$
				&\\
$\phi_{8,3}''$	&$B_3$
				&\\
$\phi_{1,12}'$	&$F_4(a_3)(211)$
				&&&&$\checkmark$\\
$\phi_{6,6}''$	&$F_4(a_3)(22)$
				&\\
$\phi_{9,6}'$	&$F_4(a_3)(31)$
				&&&$\checkmark$\\
$\phi_{12,4}$	&$F_4(a_3)$
				&&$\checkmark$&$\checkmark$\\
$\phi_{4,7}'$	&$C_3(a_1)(11)$
				&&&$\checkmark$\\
$\phi_{16,5}$	&$C_3(a_1)$
				&&&$\checkmark$\\
$\phi_{6,6}'$	&$\tilde A_2+A_1$
				&&&$\checkmark$\\
$\phi_{4,8}$	&$B_2(11)$
				&&&&$\checkmark$\\
$\phi_{9,6}''$	&$B_2$
				&&&$\checkmark$\\
$\phi_{4,7}''$	&$A_2+\tilde A_1$
				&&&$\checkmark$\\
$\phi_{8,9}'$	&$\tilde A_2$
				&\\
$\phi_{1,12}''$	&$A_2(11)$
				&&&&$\checkmark$\\
$\phi_{8,9}''$	&$A_2$
				&\\
$\phi_{9,10}$	&$A_1+\tilde A_1$
				&\\
$\phi_{2,16}'$	&$\tilde A_1(11)$
				&&&&$\checkmark$\\
$\phi_{4,13}$	&$\tilde A_1$
				&&&$\checkmark$\\
$\phi_{2,16}''$	&$A_1$
				&&&&$\checkmark$\\
$\phi_{1,24}$	&$1$
				&&&&$\checkmark$\\
\end{longtable}}

\subsection{Type $B_5$}

%In CHEVIE, $(s_1s_2)^4 = (s_2s_3)^3 = (s_3s_4)^3 = (s_4s_5)^3 = e$.
The unique maximal quotient map to a product of symmetric groups is $\varphi_{1212} \colon W \to W(A_1 \times A_4)$.

{\tiny\begin{longtable}{rl|rrrr}
		&
		&$\neg (\vartriangle)$
		&$\neg (\pm)$
		&$\neg (+)$
		&$\varphi_{1212}$-inflation\\
		\hline	
5.		&$[11]$
		&&&&$\checkmark$\\
41.		&$911(11)$
		&&&&$\checkmark$\\
4.1		&$911$
		&\\
.5		&$731(11,11)$
		&&&&$\checkmark$\\
32.		&$731(2,11)$
		&&&&$\checkmark$\\
3.2		&$731$
		&$\checkmark$&$\checkmark$&$\checkmark$\\
1.4		&$551(11)$
		&\\
2.3		&$551$
		&\\
31.1	&$722$
		&\\
311.	&$71111(11)$
		&&&&$\checkmark$\\
3.11	&$71111$
		&\\
22.1	&$533(11)$
		&\\
21.2	&$533$
		&&&$\checkmark$\\
221.	&$53111(11,11)$
		&&&&$\checkmark$\\
.41		&$53111(11,2)$
		&&&&$\checkmark$\\
2.21	&$53111$
		&&&$\checkmark$\\
11.3	&$443$
		&\\
1.31	&$44111$
		&\\
211.1	&$52211(11)$
		&\\
21.11	&$52211$
		&\\
111.2	&$33311(11)$
		&\\
11.21	&$33311$
		&&&$\checkmark$\\
.32		&$33221(11)$
		&&&&$\checkmark$\\
1.22	&$33221$
		&\\
2111.	&$5111111(11)$
		&&&&$\checkmark$\\
2.111	&$5111111$
		&\\
.311	&$3311111(11)$
		&&&&$\checkmark$\\
1.211	&$3311111$
		&\\
111.11	&$3222$
		&\\
1111.1	&$3221111(11)$
		&\\
11.111	&$3221111$
		&&&$\checkmark$\\
.221	&$2222111$
		&&&&$\checkmark$\\
11111.	&$311111111(11)$
		&&&&$\checkmark$\\
1.1111	&$311111111$
		&&&$\checkmark$\\
.2111	&$221111111$
		&&&&$\checkmark$\\
.11111	&$11111111111$
		&&&&$\checkmark$\\
\end{longtable}}

\subsection{Type $C_5$}

%The $s_i$ are labeled the same way as in type $B_5$.
As in type $B_5$, it suffices to consider $\varphi_{1212} \colon W \to W(A_1 \times A_4)$.

{\tiny\begin{longtable}{rl|rrrr}
		&
		&$\neg (\vartriangle)$
		&$\neg (\pm)$
		&$\neg (+)$
		&$\varphi_{1212}$-inflation\\
		\hline	
5.		&$[10]$
		&&&&$\checkmark$\\
.5		&$82(11)$
		&&&&$\checkmark$\\
4.1		&$82$
		&$\checkmark$&$\checkmark$&$\checkmark$\\
1.4		&$64(11)$
		&\\
3.2		&$64$
		&$\checkmark$&$\checkmark$&$\checkmark$\\
41.		&$811$
		&&&&$\checkmark$\\
2.3		&$55$
		&\\
32.		&$622(11)$
		&&&&$\checkmark$\\
31.1	&$622$
		&\\
.41		&$6211(11)$
		&&&&$\checkmark$\\
3.11	&$6211$
		&$\checkmark$&$\checkmark$&$\checkmark$\\
11.3	&$442(11)$
		&\\
21.2	&$442$
		&&&$\checkmark$\\
1.31	&$4411(11)$
		&\\
2.21	&$4411$
		&\\
22.1	&$433$
		&\\
.32		&$4222(11)$
		&&&&$\checkmark$\\
21.11	&$4222$
		&&&$\checkmark$\\
311.	&$61111$
		&&&&$\checkmark$\\
1.22	&$3322(11)$
		&\\
11.21	&$3322$
		&&&$\checkmark$\\
221.	&$42211(11)$
		&&&&$\checkmark$\\
211.1	&$42211$
		&\\
111.2	&$33211$
		&\\
.311	&$421111(11)$
		&&&&$\checkmark$\\
2.111	&$421111$
		&&&$\checkmark$\\
1.211	&$331111$
		&\\
111.11	&$22222$
		&\\
.221	&$222211(11)$
		&&&&$\checkmark$\\
11.111	&$222211$
		&&&$\checkmark$\\
2111.	&$4111111$
		&&&&$\checkmark$\\
1111.1	&$2221111$
		&\\
.2111	&$22111111(11)$
		&&&&$\checkmark$\\
1.1111	&$22111111$
		&&&$\checkmark$\\
11111.	&$211111111$
		&&&&$\checkmark$\\
.11111	&$1111111111$
		&&&&$\checkmark$\\
\end{longtable}}

\subsection{Type $D_5$}

%In CHEVIE, $(s_1s_3)^3 = (s_2s_3)^3 = (s_3s_4)^3 = (s_4s_5)^3 = e$.
The unique maximal quotient map to a product of symmetric groups is $\varphi_{12} \colon W \to W(A_4)$.

{\tiny\begin{longtable}{rl|rrrr}
		&
		&$\neg (\vartriangle)$
		&$\neg (\pm)$
		&$\neg (+)$
		&$\varphi_{12}$-inflation\\
		\hline	
.5		&$91$
		&&&&$\checkmark$\\
1.4		&$73$
		&$\checkmark$&$\checkmark$&$\checkmark$\\
.41		&$7111$
		&&&&$\checkmark$\\
2.3		&$55$
		&\\
11.3	&$5311(11)$
		&&&$\checkmark$\\
1.31	&$5311$
		&$\checkmark$&$\checkmark$&$\checkmark$\\
2.21	&$4411$
		&\\
.32		&$5221$
		&&&&$\checkmark$\\
1.22	&$3331$
		&\\
.311	&$511111$
		&&&&$\checkmark$\\
11.21	&$3322$
		&\\
111.2	&$331111(11)$
		&\\
1.211	&$331111$
		&&&$\checkmark$\\
.221	&$322111$
		&&&&$\checkmark$\\
11.111	&$222211$
		&\\
.2111	&$31111111$
		&&&&$\checkmark$\\
1.1111	&$22111111$
		&\\
.11111	&$1111111111$
		&&&&$\checkmark$\\
\end{longtable}}

\subsection{Type $B_6$, Rationally Smooth $z$}

The unique maximal quotient map to a product of symmetic groups is $\varphi_{1212} \colon W \to W(A_1 \times A_5)$.

There are $65$ irreducible characters of $W$, indexed by the bipartitions of $6$.
Of these, $22$ are inflated along $\varphi_{1212}$, being those indexed by the bipartitions $(\xi, \eta)$ in which one of $\xi$ or $\eta$ is empty, \emph{cf.}~\Cref{ex:bc}.

Properties $(\vartriangle)_{\psi, G}^\z$ and $(\pm)_{\psi, G}^\z$ always hold when $\z$ is rationally smooth.
The $\psi$ where $(+)_{\psi, G}^\z$ fails for some rationally smooth $\z$ are listed below.

{\tiny\begin{longtable}{rl}
4.2		&$931$\\
1.5		&$751(11,11)$\\
3.3		&$751$\\
3.21	&$73111$\\
221.1	&$53311(11,11)$\\
211.2	&$53311(11,2)$\\
21.21	&$53311$\\
11.31	&$44311$\\
2.22	&$53221$\\
2.211	&$5311111$\\
11.211	&$3331111$\\
111.111	&$3222211$\\
11.1111	&$322111111$\\
1.11111	&$31111111111$\\
\end{longtable}}

\subsection{Type $C_6$, Rationally Smooth $z$}

As in type $B_6$, it suffices to consider $\varphi_{1212} \colon W \to W(A_1 \times A_5)$, and the $22$ irreducible characters inflated along $\varphi_{1212}$ are those indexed by the bipartitions $(\xi, \eta)$ in which one of $\xi$ or $\eta$ is empty.

Properties $(\vartriangle)_{\psi, G}^\z$ and $(\pm)_{\psi, G}^\z$ always hold when $\z$ is rationally smooth.
The $\psi$ where $(+)_{\psi, G}^\z$ fails for some rationally smooth $\z$ are listed below.

{\tiny\begin{longtable}{rl}
5.1		&$[10]2$\\
1.5		&$84(11)$\\
4.2		&$84$\\
4.11	&$8211$\\
31.2	&$642$\\
1.41	&$6411(11)$\\
3.21	&$6411$\\
31.11	&$6222$\\
11.31	&$4422(2,11)$\\
21.21	&$4422$\\
22.11	&$4332$\\
3.111	&$621111$\\
221.1	&$43311$\\
21.111	&$422211$\\
11.211	&$332211$\\
111.111	&$222222$\\
2.1111	&$42111111$\\
11.1111	&$22221111$\\
1.11111	&$2211111111$\\
\end{longtable}}

\subsection{Type $D_6$, Rationally Smooth $z$}

The unique maximal quotient map to a product of symmetic groups is $\varphi_{12} \colon W \to W(A_5)$.

%There are $37$ irreducible characters of $W$, of two kinds.
%Following~\cite{carter}, the first kind is indexed by unordered pairs $\{\xi, \eta\}$, where $(\xi, \eta)$ runs over bipartitions of $6$ in which $\xi \neq \eta$; the second kind are indexed by labels $(\xi, \xi) \cdot \epsilon$, where $(\xi, \xi)$ runs over bipartitions of $6$ with both components identical and $\epsilon \in \{\pm\}$.
%There are $11$ irreducibles inflated along $\varphi_{12}$, all of the first kind:
%They are indexed by the pairs $\{\xi, \eta\}$ in which one of $\xi$ or $\eta$ is empty.

%The $\psi$ where at least one of $(\vartriangle)_{\psi, G}^\z, (\pm)_{\psi, G}^\z, (+)_{\psi, G}^\z$ fails for some rationally smooth $\z$ are listed below.

{\tiny\begin{longtable}{rl|rrrr}
			&
			&$\neg (\vartriangle)$
			&$\neg (\pm)$
			&$\neg (+)$
			&$\varphi_{12}$-inflation\\
			\hline	
$.6$		&$[11]1$
			&&&&$\checkmark$\\
$1.5$		&$93$
			&$\checkmark$&&$\checkmark$\\
$.51$		&$9111$
			&&&&$\checkmark$\\
$2.4$		&$75$
			&$\checkmark$&$\checkmark$&$\checkmark$\\
$11.4$		&$7311(11)$
			&&&$\checkmark$\\
$1.41$		&$7311$
			&$\checkmark$&$\checkmark$&$\checkmark$\\
$3\cdot+$	&$66\cdot+$
			&\\
$3\cdot-$	&$66\cdot-$
			&\\
$21.3$		&$5511(11)$
			&\\
$2.31$		&$5511$
			&\\
$.42$		&$7221$
			&&&&$\checkmark$\\
$.33$		&$5331(11)$
			&&&&$\checkmark$\\
$1.32$		&$5331$
			&\\
$.411$		&$711111$
			&&&&$\checkmark$\\
$2.22$		&$4431$
			&\\
$11.31$		&$5322$
			&&&$\checkmark$\\
$111.3$		&$531111(11)$
			&&&$\checkmark$\\
$1.311$		&$531111$
			&&&$\checkmark$\\
$21\cdot+$	&$4422\cdot+$
			&\\
$21\cdot-$	&$4422\cdot-$
			&\\
$2.211$		&$441111$
			&\\
$.321$		&$522111$
			&&&&$\checkmark$\\
$11.22$		&$3333$
			&\\
$1.221$		&$333111$
			&\\
$111.21$	&$332211(11)$
			&\\
$11.211$	&$332211$
			&&&$\checkmark$\\
$.3111$		&$51111111$
			&&&&$\checkmark$\\
$.222$		&$322221$
			&&&&$\checkmark$\\
$1111.2$	&$33111111(11)$
			&\\
$1.2111$	&$33111111$
			&&&$\checkmark$\\
$.2211$		&$32211111$
			&&&&$\checkmark$\\
$111\cdot+$	&$222222\cdot+$
			&\\
$111\cdot-$	&$222222\cdot-$
			&\\
$11.1111$	&$22221111$
			&\\
$.21111$	&$3111111111$
			&&&&$\checkmark$\\
$1.11111$	&$2211111111$
			&\\
$.111111$	&$111111111111$
			&&&&$\checkmark$\\
\end{longtable}}

\subsection{Type $E_6$, Rationally Smooth $z$}

The unique nontrivial quotient map is the sign character $\varepsilon \colon W \to \{\pm 1\} \simeq W(A_1)$.

{\tiny\begin{longtable}{rl|rrrr}
				&
				&$\neg (\vartriangle)$
				&$\neg (\pm)$
				&$\neg (+)$
				&$\varepsilon$-inflation\\
				\hline
$\phi_{1,0}$	&$E_6$
				&&&&$\checkmark$\\
$\phi_{6,1}$	&$E_6(a_1)$
				&$\checkmark$&$\checkmark$&$\checkmark$\\
$\phi_{20,2}$	&$D_5$
				&\\
$\phi_{15,5}$	&$E_6(a_3)(11)$
				&\\
$\phi_{30,3}$	&$E_6(a_3)$
				&&&$\checkmark$\\
$\phi_{15,4}$	&$A_5$
				&\\
$\phi_{64,4}$	&$D_5(a_1)$
				&&&$\checkmark$\\
$\phi_{60,5}$	&$A_4+A_1$
				&\\
$\phi_{24,6}$	&$D_4$
				&\\
$\phi_{81,6}$	&$A_4$
				&\\
$\phi_{20,10}$	&$D_4(a_1)(111)$
				&&&$\checkmark$\\
$\phi_{90,8}$	&$D_4(a_1)(21)$
				&&&$\checkmark$\\
$\phi_{80,7}$	&$D_4(a_1)$
				&&&$\checkmark$\\
$\phi_{60,8}$	&$A_3+A_1$
				&\\
$\phi_{10,9}$	&$2A_2+A_1$
				&\\
$\phi_{81,10}$	&$A_3$
				&\\
$\phi_{60,11}$	&$A_2+2A_1$
				&\\
$\phi_{24,12}$	&$2A_2$
				&\\
$\phi_{64,13}$	&$A_2+A_1$
				&\\
$\phi_{15,17}$	&$A_2(11)$
				&\\
$\phi_{30,15}$	&$A_2$
				&&&$\checkmark$\\
$\phi_{15,16}$	&$3A_1$
				&\\
$\phi_{20,20}$	&$2A_1$
				&\\
$\phi_{6,25}$	&$A_1$
				&\\
$\phi_{1,36}$	&$1$
				&&&&$\checkmark$\\
\end{longtable}}

\appendix 

\section{The Exotic Fourier Transform}\label[appendix]{sec:dl}

For completeness, this appendix provides the definition of the pairing $\{-, -\}$ from the character theory of finite reductive groups~\cite{lusztig_84}.

Throughout, $\bb{k}$ is the algebraic closure of a finite field $\bb{F}$.
As in the body of the article, $G$ is a connected reductive algebraic group over $\bb{k}$ with Weyl group $W$, such that the characteristic of $\bb{k}$ is a good prime for $G$~\cite[28]{carter}.
Let $F \colon G \to G$ be the Frobenius map corresponding to the split $\bb{F}$-structure on $G$.

For all $w \in W$, the Deligne--Lusztig variety $X_w$ is defined by the pullback square:
\begin{equation}
\begin{tikzcd}
	O_w \arrow{d} &X_w \arrow{l} \arrow{d}\\
	\cal{B} \times \cal{B} &\cal{B} \arrow{l}[above]{\id \times F}
\end{tikzcd}
\end{equation}
In other words, $X_w \subseteq \cal{B}$ is the subvariety of Borels $B$ such that $B \xrightarrow{w} F(B)$.
The finite reductive group $G^F$ acts on $X_w$, and hence on its compactly-supported $\ell$-adic \'etale cohomology $\ur{H}_c^i(X_w, \QL)$.
The \dfemph{Deligne--Lusztig character} $R_w \colon G^F \to \QL$ is the virtual character defined by
\begin{align}
	R_w(g) = \sum_i {(-1)^i} \tr(g \mid \ur{H}_c^i(X_w, \QL)).
\end{align}
For any $\psi \in \Irr(W)$, the \dfemph{almost-character} $R_\psi \colon G^F \to \QL$ is defined by
\begin{align} 
	R_\psi = \frac{1}{|W|} \sum_{w \in W} \psi(w)R_w.
\end{align} 
As in \Cref{sec:intro}, let $e$ be the identity of $W$.
Then $X_e$ is the finite set $\cal{B}^F$, and $R_e$ is the character of the principal series representation of $G^F$ on $\ur{H}^0(\cal{B}^F, \QL)$.

Let $q = |\bb{F}|$.
Fix a square root $q^{1/2} \in \QL^\times$.
By a classical theorem of Iwahori, the specialized Hecke algebra $H_W(q) \vcentcolon= H_W(\sf{v})|_{\sf{v} \to q^{1/2}}$ acts on $\ur{H}^0(\cal{B}^F, \QL)$ by operators commuting with $G^F$.
Namely, $\delta_w$ sends the indicator function at a Borel $B$ to the indicator on the set of Borels $B'$ such that $B \xrightarrow{w} B'$.
Therefore, $R_e$ can be promoted to a bitrace $R_e \colon G^F \times H_W(q) \to \QL$.
We have a decomposition
\begin{align}\label{eq:principal-series}
R_e
	= \sum_{\chi \in \Irr(W)}
	\rho_\chi \otimes \chi_\sf{v}|_{\sf{v} \to q^{1/2}},
\end{align}
where $\rho_\chi$ is an irreducible character of $G^F$ called the \dfemph{unipotent principal series character} associated with $\chi$.

Let $(-, -)_{G^F}$ be the scalar product on class functions on $G^F$.
We define the \dfemph{(truncated) exotic Fourier transform} $\{-, -\}$ by setting 
\begin{align} 
\{\chi, \psi\} = (\rho_\chi, R_\psi)_{G^F}
	\quad\text{for all $\chi, \psi$}.
\end{align}
It turns out that this, indeed, only depends on $W$, and not on $\bb{F}$ or $G$.

%An irreducible character of $G^F$ is called \dfemph{unipotent} when it has nonzero scalar product with $R_w$ for some $w\in W$.
%Let $\Uch(G^F) \subseteq \Irr(G^F)$ be the set of unipotent irreducible characters.
%Let $\sim$ be the reflexive, symmetric relation on $\Uch(G^F)$ where $\rho \sim \rho'$ if and only if $\rho$ and $\rho'$ both have nonzero scalar product with $R_\psi$ for some $\psi \in \Irr(W)$.
%Lusztig's \dfemph{families} of unipotent irreducible characters are the equivalence classes of $\Uch(G^F)$ under the transitive closure of $\sim$~\cite[\S{12.3}]{carter}.

%The map $\chi \mapsto \rho_\chi$ gives us an embedding of $\Irr(W)$ into $\Uch(G^F)$.
%The \dfemph{(truncated) families} of $\Irr(W)$ are the restrictions of the families above to $\Irr(W)$.
%For an alternative definition based on the two-sided cells of $W$, see~\cite[\S{12.4}]{carter}.

%------------------------------------------------------------

\bibliographystyle{alphaurl}
\bibliography{haiman}

@article{an_25,
	author = {Abreu, Alex and Nigro, Antonio},
	title = {A geometric approach to characters of {Hecke} algebras},
	fjournal = {Journal f{\"u}r die Reine und Angewandte Mathematik},
	journal = {J. Reine Angew. Math.},
	issn = {0075-4102},
	volume = {821},
	pages = {53--114},
	year = {2025},
	doi = {10.1515/crelle-2024-0098},
}

@incollection {achar_rims,
	AUTHOR = {Achar, Pramod N.},
	TITLE = {Springer theory for complex reflection groups},
	BOOKTITLE = {RIMS K{\^{o}}ky{\^{u}}roku 1647. Expansion of Combinatorial Representation Theory. Ed. Hyohe Miyachi},
	PUBLISHER = {Research Institute for Mathematical Sciences},
	YEAR = {2009},
	PAGES = {97--112},
	URL = {https://www.kurims.kyoto-u.ac.jp/~kyodo/kokyuroku/contents/1647.html},
}

@book {achar,
	AUTHOR = {Achar, Pramod N.},
	TITLE = {Perverse sheaves and applications to representation theory},
	SERIES = {Mathematical Surveys and Monographs},
	VOLUME = {258},
	PUBLISHER = {American Mathematical Society, Providence, RI},
	YEAR = {[2021] \copyright 2021},
	PAGES = {xii+562},
	ISBN = {978-1-4704-5597-2},
	DOI = {10.1090/surv/258},
}

@article{aa,
	author = {Achar, P. N. and Aubert, A.-M.},
	title = {Springer correspondences for dihedral groups.},
	fjournal = {Transformation Groups},
	journal = {Transform. Groups},
	issn = {1083-4362},
	volume = {13},
	number = {1},
	pages = {1--24},
	year = {2008},
	doi = {10.1007/s00031-008-9004-2},
}

@article{ap,
	author = {Alexandersson, Per and Panova, Greta},
	title = {{LLT} polynomials, chromatic quasisymmetric functions and graphs with cycles},
	fjournal = {Discrete Mathematics},
	journal = {Discrete Math.},
	issn = {0012-365X},
	volume = {341},
	number = {12},
	pages = {3453--3482},
	year = {2018},
	doi = {10.1016/j.disc.2018.09.001},
}

@book{agvdd,
	editor = {Artin, M. and Grothendieck, A. and Verdier, J. L. and Deligne, P. and Saint-Donat, Bernard},
	title = {S{\'e}minaire de g{\'e}om{\'e}trie alg{\'e}brique du {Bois}-{Marie} 1963--1964. {Th{\'e}orie} des topos et cohomologie {\'e}tale des sch{\'e}mas ({SGA} 4). {Un} s{\'e}minaire dirig{\'e} par {M}. {Artin}, {A}. {Grothendieck}, {J}. {L}. {Verdier}. {Avec} la collaboration de {P}. {Deligne}, {B}. {Saint}-{Donat}. {Tome} 3. {Expos{\'e}s} {IX} {\`a} {XIX}},
	fseries = {Lecture Notes in Mathematics},
	series = {Lect. Notes Math.},
	issn = {0075-8434},
	volume = {305},
	year = {1973},
	publisher = {Springer, Cham},
	doi = {10.1007/BFb0070714},
}

@incollection{aubert,
	author = {Aubert, Anne-Marie},
	title = {Some properties of character sheaves},
	booktitle = {Olga Taussky-Todd: In memoriam.},
	isbn = {1-57146-051-9},
	pages = {pac. j. math., spec. issue, 37--52},
	year = {1998},
	publisher = {Cambridge, MA: International Press},
}

@article{bc,
	author = {Brosnan, Patrick and Chow, Timothy Y.},
	title = {Unit interval orders and the dot action on the cohomology of regular semisimple {Hessenberg} varieties},
	fjournal = {Advances in Mathematics},
	journal = {Adv. Math.},
	issn = {0001-8708},
	volume = {329},
	pages = {955--1001},
	year = {2018},
	doi = {10.1016/j.aim.2018.02.020},
}

@misc{bhl,
	author = {Brosnan, Patrick and Hong, Jaehyun and Lee, Donggun},
	title = {Geometry of regular semisimple {Lusztig} varieties},
	year = {2025},
	eprint = {2504.15868},
	eprinttype = {arxiv},
	note = {v1},
}

@article{cm,
	author = {Carlsson, Erik and Mellit, Anton},
	title = {A proof of the shuffle conjecture},
	fjournal = {Journal of the American Mathematical Society},
	journal = {J. Am. Math. Soc.},
	issn = {0894-0347},
	volume = {31},
	number = {3},
	pages = {661--697},
	year = {2018},
	doi = {10.1090/jams/893},
}

@book{carter,
	AUTHOR = {Carter, Roger W.},
	TITLE = {Finite groups of {L}ie type},
	SERIES = {Wiley Classics Library},
	NOTE = {Conjugacy classes and complex characters,
	Reprint of the 1985 original,
	A Wiley-Interscience Publication},
	PUBLISHER = {John Wiley \& Sons, Ltd., Chichester},
	YEAR = {1993},
	PAGES = {xii+544},
}

@article{chss,
	author = {Clearman, Samuel and Hyatt, Matthew and Shelton, Brittany and Skandera, Mark},
	title = {Evaluations of {Hecke} algebra traces at {Kazhdan}-{Lusztig} basis elements},
	fjournal = {The Electronic Journal of Combinatorics},
	journal = {Electron. J. Comb.},
	issn = {1077-8926},
	volume = {23},
	number = {2},
	pages = {research paper p2.7, 56},
	year = {2016},
	url = {www.combinatorics.org/ojs/index.php/eljc/article/view/v23i2p7},
}

@misc{deligne,
	author = {Deligne, Pierre},
	title = {Th{\'e}oremes de finitude en cohomologie {$\ell$}-adique (avec un appendice par {L}. {Illusie})},
	year = {1977},
	language = {French},
	howpublished = {Semin. {Geom}. algebr. {Bois}-{Marie}, {SGA} 4 1/2, {Lect}. {Notes} {Math}. 569, 233-261},
	doi = {10.1007/bfb0091524},
}

@article{ghmp,
	author = {Geck, Meinolf and Hiss, Gerhard and L{\"u}beck, Franz and Malle, Gunter and Pfeiffer, G{\"o}tz},
	title = {\textsf{CHEVIE} -- {A} system for computing and processing generic character tables for finite groups of {L}ie type, {W}eyl groups and {H}ecke algebras},
	journal = {Appl. Algebra Engrg. Comm. Comput.},
	volume = {7},
	pages = {175--210},
	year = {1996},
}

@article{gm,
	author = {Geck, Meinolf and Malle, Gunter},
	title = {On special pieces in the unipotent variety},
	fjournal = {Experimental Mathematics},
	journal = {Exp. Math.},
	issn = {1058-6458},
	volume = {8},
	number = {3},
	pages = {281--290},
	year = {1999},
	doi = {10.1080/10586458.1999.10504405},
}

@misc{gmrww,
	author = {Griffin, Sean T. and Mellit, Anton and Romero, Marino and Weigl, Kevin and Wen, Joshua Jeishing},
	title = {On {Macdonald} expansions of {$q$}-chromatic symmetric functions and the {Stanley}-{Stembridge} {Conjecture}},
	year = {2025},
	eprint = {2504.06936},
	eprinttype = {arXiv},
	note = {v1},
}

@article{haiman,
	author = {Haiman, Mark},
	title = {Hecke algebra characters and immanant conjectures},
	fjournal = {Journal of the American Mathematical Society},
	journal = {J. Am. Math. Soc.},
	issn = {0894-0347},
	volume = {6},
	number = {3},
	pages = {569--595},
	year = {1993},
	doi = {10.2307/2152777},
}

@article{he,
	author = {He, Xuhua},
	title = {On affine {Lusztig} varieties},
	fjournal = {Annales Scientifiques de l'{\'E}cole Normale Sup{\'e}rieure. Quatri{\`e}me S{\'e}rie},
	journal = {Ann. Sci. {\'E}c. Norm. Sup{\'e}r. (4)},
	issn = {0012-9593},
	volume = {58},
	number = {3},
	pages = {749--776},
	year = {2025},
	language = {English},
	doi = {10.24033/asens.2615},
	zbMATH = {8096909}
}

@misc{hikita,
	author = {Hikita, Tatsuyuki},
	title = {A proof of the {Stanley}--{Stembridge} conjecture},
	year = {2025},
	eprint = {2410.12758},
	eprinttype = {arXiv},
	note = {v2},
}

@misc{kato,
	author = {Kato, Syu},
	title = {A geometric realization of the chromatic symmetric function of a unit interval graph},
	year = {2024},
	eprint = {2410.12231},
	eprinttype = {arxiv},
	note = {v2},
}

@article{kl,
	author = {Kazhdan, David and Lusztig, George},
	title = {Representations of {Coxeter} groups and {Hecke} algebras},
	fjournal = {Inventiones Mathematicae},
	journal = {Invent. Math.},
	issn = {0020-9910},
	volume = {53},
	pages = {165--184},
	year = {1979},
	doi = {10.1007/BF01390031},
}

@article{kim,
	author = {Kim, Dongkwan},
	title = {On total {Springer} representations for classical types},
	fjournal = {Selecta Mathematica. New Series},
	journal = {Sel. Math., New Ser.},
	issn = {1022-1824},
	volume = {24},
	number = {5},
	pages = {4141--4196},
	year = {2018},
	doi = {10.1007/s00029-018-0438-7},
}

@article{kv,
	author = {Kottwitz, Robert and Viehmann, Eva},
	title = {Generalized affine {Springer} fibres},
	fjournal = {Journal of the Institute of Mathematics of Jussieu},
	journal = {J. Inst. Math. Jussieu},
	issn = {1474-7480},
	volume = {11},
	number = {3},
	pages = {569--609},
	year = {2012},
	doi = {10.1017/S147474801100020X},
}

@incollection{lusztig_79,
	author = {Lusztig, G.},
	title = {On the reflection representation of a finite {Chevalley} group},
	booktitle = {Representation theory of {Lie} groups, {Proc}. {SRC}/{LMS} {Res}. {Symp}., {Oxford} 1977},
	year = {1979},
	volume = {34},
	series = {{Lond}. {Math}. {Soc}. {Lect}. {Note} {Ser}.},
	pages = {325--337},
}

@book {lusztig_84,
	AUTHOR = {Lusztig, George},
	TITLE = {Characters of reductive groups over a finite field},
	SERIES = {Annals of Mathematics Studies},
	VOLUME = {107},
	PUBLISHER = {Princeton University Press, Princeton, NJ},
	YEAR = {1984},
	PAGES = {xxi+384},
	ISBN = {0-691-08350-9; 0-691-08351-7},
	MRCLASS = {20G05 (14L20 20C15)},
	MRNUMBER = {742472},
	MRREVIEWER = {Bhama\ Srinivasan},
	DOI = {10.1515/9781400881772},
}

@article{lusztig_85_2,
	author = {Lusztig, George},
	title = {Character sheaves. {II}},
	fjournal = {Advances in Mathematics},
	journal = {Adv. Math.},
	issn = {0001-8708},
	volume = {57},
	pages = {226--265},
	year = {1985},
	doi = {10.1016/0001-8708(85)90064-7},
}

@article{lusztig_85_3,
	author = {Lusztig, George},
	title = {Character sheaves. {III}},
	fjournal = {Advances in Mathematics},
	journal = {Adv. Math.},
	issn = {0001-8708},
	volume = {57},
	pages = {266--315},
	year = {1985},
	doi = {10.1016/0001-8708(85)90065-9},
}

@article{lusztig_86_5,
	author = {Lusztig, George},
	title = {Character sheaves. {V}},
	fjournal = {Advances in Mathematics},
	journal = {Adv. Math.},
	issn = {0001-8708},
	volume = {61},
	pages = {103--155},
	year = {1986},
	doi = {10.1016/0001-8708(86)90071-X},
}

@article{lusztig_88,
	author = {Lusztig, George},
	title = {Cuspidal local systems and graded {Hecke} algebras. {I}},
	fjournal = {Publications Math{\'e}matiques},
	journal = {Publ. Math., Inst. Hautes {\'E}tud. Sci.},
	issn = {0073-8301},
	volume = {67},
	pages = {145--202},
	year = {1988},
	doi = {10.1007/BF02699129},
}

@incollection {lusztig_93,
	AUTHOR = {Lusztig, George},
	TITLE = {Appendix: {C}oxeter groups and unipotent representations},
	NOTE = {Repr\'esentations unipotentes g\'en\'eriques et blocs des groupes r\'eductifs finis},
	JOURNAL = {Ast\'erisque},
	FJOURNAL = {Ast\'erisque},
	NUMBER = {212},
	YEAR = {1993},
	PAGES = {191--203},
	ISSN = {0303-1179,2492-5926},
	URL = {http://www.numdam.org/item/AST_1993__212__191_0/},
}

@article {lusztig_94,
	AUTHOR = {Lusztig, George},
	TITLE = {Exotic {F}ourier transform},
	NOTE = {With an appendix by Gunter Malle},
	JOURNAL = {Duke Math. J.},
	FJOURNAL = {Duke Mathematical Journal},
	VOLUME = {73},
	YEAR = {1994},
	NUMBER = {1},
	PAGES = {227--241, 243--248},
	ISSN = {0012-7094,1547-7398},
	DOI = {10.1215/S0012-7094-94-07309-2},
}

@article{lusztig_97,
	author = {Lusztig, G.},
	title = {Notes on unipotent classes},
	fjournal = {The Asian Journal of Mathematics},
	journal = {Asian J. Math.},
	issn = {1093-6106},
	volume = {1},
	number = {1},
	pages = {194--207},
	year = {1997},
	doi = {10.4310/AJM.1997.v1.n1.a7},
}

@article{lusztig_11_partitions,
	author = {Lusztig, George},
	title = {On some partitions of a flag manifold.},
	fjournal = {The Asian Journal of Mathematics},
	journal = {Asian J. Math.},
	issn = {1093-6106},
	volume = {15},
	number = {1},
	pages = {1--8},
	year = {2011},
	doi = {10.4310/AJM.2011.v15.n1.a1},
}

@book{macdonald,
	author = {Macdonald, Ian Grant},
	title = {Symmetric functions and {Hall} polynomials.},
	edition = {2nd},
	isbn = {0-19-853489-2},
	year = {1995},
	publisher = {Oxford: Clarendon Press},
}

@article{maxwell,
	author = {Maxwell, George},
	title = {The normal subgroups of finite and affine {Coxeter} groups},
	fjournal = {Proceedings of the London Mathematical Society. Third Series},
	journal = {Proc. Lond. Math. Soc. (3)},
	issn = {0024-6115},
	volume = {76},
	number = {2},
	pages = {359--382},
	year = {1998},
	doi = {10.1112/S0024611598000112},
}

@article{michel,
	author = {Michel, Jean},
	title = {Tower equivalence and {Lusztig}'s truncated {Fourier} transform},
	fjournal = {Proceedings of the American Mathematical Society. Series B},
	journal = {Proc. Am. Math. Soc., Ser. B},
	issn = {2330-1511},
	volume = {10},
	pages = {252--261},
	year = {2023},
	doi = {10.1090/bproc/167},
}

@incollection{rr,
	author = {Rider, Laura and Russell, Amber},
	title = {Perverse sheaves on the nilpotent cone and {Lusztig}'s generalized {Springer} correspondence},
	booktitle = {Lie algebras, Lie superalgebras, vertex algebras and related topics. 2012--2014 Southeastern Lie theory workshop series: Categorification of quantum groups and representation theory, North Carolina State University, Raleigh, NC, USA, April 21--22, 2012},
	series = {Proc. Sympos. Pure Math.},
	volume = {92},
	isbn = {978-1-4704-1844-1; 978-1-4704-3013-9},
	pages = {273--292},
	year = {2016},
	publisher = {Providence, RI: American Mathematical Society (AMS)},
}

@article{sw,
	author = {Shareshian, John and Wachs, Michelle L.},
	title = {Chromatic quasisymmetric functions},
	fjournal = {Advances in Mathematics},
	journal = {Adv. Math.},
	issn = {0001-8708},
	volume = {295},
	pages = {497--551},
	year = {2016},
	doi = {10.1016/j.aim.2015.12.018},
}

@article{sommers,
	author = {Sommers, Eric},
	title = {A generalization of the {Bala}-{Carter} theorem for nilpotent orbits},
	fjournal = {IMRN. International Mathematics Research Notices},
	journal = {Int. Math. Res. Not.},
	issn = {1073-7928},
	volume = {1998},
	number = {11},
	pages = {539--562},
	year = {1998},
	doi = {10.1155/S107379289800035X},
}

@article{springer_78,
	author = {Springer, T. A.},
	title = {A construction of representations of {Weyl} groups},
	fjournal = {Inventiones Mathematicae},
	journal = {Invent. Math.},
	issn = {0020-9910},
	volume = {44},
	pages = {279--293},
	year = {1978},
	doi = {10.1007/BF01403165},
}

@article{stanley,
	author = {Stanley, Richard P.},
	title = {A symmetric function generalization of the chromatic polynomial of a graph},
	fjournal = {Advances in Mathematics},
	journal = {Adv. Math.},
	issn = {0001-8708},
	volume = {111},
	number = {1},
	pages = {166--194},
	year = {1995},
	language = {English},
	doi = {10.1006/aima.1995.1020},
	zbMATH = {755384},
	Zbl = {0831.05027}
}

@article{ss,
	author = {Stanley, Richard P. and Stembridge, John R.},
	title = {On immanants of {Jacobi}-{Trudi} matrices and permutations with restricted position},
	fjournal = {Journal of Combinatorial Theory. Series A},
	journal = {J. Comb. Theory, Ser. A},
	issn = {0097-3165},
	volume = {62},
	number = {2},
	pages = {261--279},
	year = {1993},
	doi = {10.1016/0097-3165(93)90048-D},
}

@article{treumann,
	author = {Treumann, David},
	title = {A topological approach to induction theorems in {Springer} theory.},
	fjournal = {Representation Theory},
	journal = {Represent. Theory},
	issn = {1088-4165},
	volume = {13},
	pages = {8--18},
	year = {2009},
	doi = {10.1090/S1088-4165-09-00342-2},
}

@misc {trinh,
	author = {Trinh, Minh-T\^am Quang},
	title = {From the {H}ecke Category to the Unipotent Locus}, 
	year = {2021},
	eprint = {2106.07444},
	eprinttype = {arXiv},
	note = {v2},
}

@book{wang,
	author = {Wang, Xiao Griffin},
	title = {Multiplicative {H}itchin Fibrations and the {F}undamental {L}emma},
	year = {2026},
	publisher = {Princeton University Press},
	address = {Princeton},
	note = {{ISBN:} 9780691281254},
}

@article{ww,
	author = {Webster, Ben and Williamson, Geordie},
	title = {The geometry of {Markov} traces},
	fjournal = {Duke Mathematical Journal},
	journal = {Duke Math. J.},
	issn = {0012-7094},
	volume = {160},
	number = {2},
	pages = {401--419},
	year = {2011},
	doi = {10.1215/00127094-1444268},
}

\end{document}